\documentclass[11pt]{amsart}

\usepackage{amssymb,amsrefs,amscd,skak,xcolor}
\usepackage{tikz}
\usepackage{tikz-cd}
\usetikzlibrary{matrix,arrows}

\usepackage{hyperref}

\usepackage[a4paper,twoside,left=2.8cm,right=2.8cm,top=3.1cm,bottom=2.3cm]{geometry}
\usepackage{lmodern}
\usepackage{multirow}

\BibSpec{article}{%
  +{}{\PrintAuthors}  		{author}
  +{,}{ \textit}     		{title}
  +{,}{ }             		{journal}
  +{}{ \textbf}       		{volume}
  +{}{ \parenthesize} 		{date}
  +{,}{ }      	      		{conference}
  +{,}{ }      	      		{book}
  +{,}{ }            		{pages}
  +{,}{ }            	 	{note}
  +{,}{ }            	 	{status}
  +{, available at}{  \texttt } {eprint}
  +{.}{}              {transition}
}
\BibSpec{book}{%
  +{}{\PrintAuthors}  {author}
  +{,}{ \textit}      {title}
  +{}  { \PrintEditorsB} 		    {editor}
  +{,} { }            {series}
  +{,} { \voltext}    {volume}
  +{,} { }            {publisher}
  +{,}{ }             {date}
  +{,}{ }	      {note}
  +{.}{}              {transition}
}

\BibSpec{collection.article}{%
+{}  {\PrintAuthors}                {author}
+{,} { \textit}                     {title}
+{.} { }                            {part}
+{:} { \textit}                     {subtitle}
+{,} { \PrintContributions}         {contribution}
+{,} { \PrintConference}            {conference}
+{}  {\PrintBook}                   {book}
+{,} { }                            {booktitle}
+{}  { \PrintEditorsB} 		    {editor}
+{,} { }            		    {series}
+{,} { \voltext}    		    {volume}
+{,} { }          		    {publisher}
+{,} { \PrintDateB}                 {date}
+{,} { pp.~}                        {pages}
+{,} { }                            {status}
+{.} {}                             {transition}
}

%
\newtheorem{theorem}{Theorem}[section]
\newtheorem*{theorem*}{Theorem}

\theoremstyle{plain}
\newtheorem{corollary}[theorem]{Corollary}
\newtheorem{lemma}[theorem]{Lemma}
\newtheorem{proposition}[theorem]{Proposition}

\newtheorem*{lemma*}{Lemma}
\newtheorem*{proposition*}{Proposition}

\theoremstyle{definition}
\newtheorem{definition}[theorem]{Definition}

\newtheorem{remark}[theorem]{Remark}

\newcommand{\CC}{\mathbb{C}}

\newcommand{\RR}{\mathbb{R}}

\newcommand{\ZZ}{\mathbb{Z}}

\newcommand{\fgg}{\mathfrak{g}}
\newcommand{\fll}{\mathfrak{l}}

\newcommand{\foo}{\mathfrak{o}}
\newcommand{\fss}{\mathfrak{s}}

\newcommand{\hathatrho}{\hat{\hat \rho}}

\newcommand{\calA}{\mathcal{A}}
\newcommand{\calB}{\mathcal{B}}
\newcommand{\calC}{\mathcal{C}}

\newcommand{\calF}{\mathcal{F}}
\newcommand{\calH}{\mathcal{H}}

\newcommand{\calL}{\mathcal{O}}
\newcommand{\calM}{\mathcal{P}}
\newcommand{\calN}{\mathcal{N}}

\newcommand{\calR}{\mathcal{R}}

\newcommand{\calV}{\mathcal{V}}
\newcommand{\calLK}{\mathcal L\mathcal K}
\newcommand \restrict[2]{\left. #1\right|_{#2}}
\newcommand \restrictplus[2]{\left. #1\right|^+_{#2}}

\newcommand \barbar[1]{\bar{\bar{#1}}}
\newcommand \hathat[1]{\hat{\hat{#1}}}

\newcommand{\inv}{^{-1}}

\DeclareMathOperator{\rot}{rot}
\DeclareMathOperator{\barrot}{\overline{rot}}
\DeclareMathOperator{\ev}{ev}
\DeclareMathOperator{\Val}{Val}
\DeclareMathOperator{\Curv}{Curv}

\DeclareMathOperator{\glob}{glob}

\DeclareMathOperator{\linspan}{span}

\DeclareMathOperator{\sgn}{sgn}

\DeclareMathOperator{\ang}{Ang}
\DeclareMathOperator{\Null}{Null}

\DeclareMathOperator{\nc}{nc}

\begin{document}
\title{Riemannian curvature measures}
\author{Joseph H.G. Fu and Thomas Wannerer}
\thanks{JHGF was supported by NSF grant DMS-1406252. He wishes also to thank the Courant Institute at NYU, and the Technical University of Vienna, for their kind hospitality during the preparation of this paper.
TW was supported by DFG grant WA 3510/1-1. The authors also wish to thank  A. Bernig and G. Solanes for their many helpful 
comments.}

\begin{abstract} 
A famous theorem of Weyl states that if $M$ is a compact submanifold of euclidean space, then the  volumes of small tubes about $M$ are given by a polynomial in the radius $r$, with coefficients that are expressible as integrals of certain scalar invariants of the curvature tensor of $M$ with respect to the induced metric. It is natural to interpret this phenomenon in terms of  curvature measures and smooth valuations, in the sense of Alesker, canonically associated to the Riemannian structure of $M$. This perspective yields a fundamental new structure in Riemannian geometry, in the form of a certain abstract module over the polynomial algebra $\mathbb R[t]$ that reflects the behavior of Alesker multiplication. This module encodes a key piece of the array of  kinematic formulas of any Riemannian manifold on which a group of isometries acts transitively on the sphere bundle.
We illustrate this principle in precise terms in the case where $M$ is a complex space form.
\end{abstract}

\email{joefu@uga.edu}
\email{thomas.wannerer@uni-jena.de}

\keywords{Curvature measures, smooth valuations, Lipschitz-Killing curvatures}
\subjclass[2010]{}
\date{\today}
\maketitle

\section{Introduction} 
%

The so-called Weyl tube formula states that if $M \hookrightarrow \RR^N$ is a smooth isometric embedding of a compact 
smooth Riemannian manifold, then the volume of a tube around $M$ of sufficiently small radius $r>0$ is given by a polynomial 
of degree $\le N$ in $r$, whose coefficients may be expressed as integrals of scalar invariants (Lipschitz-Killing curvatures) of the 
curvature tensor of $M$. Up to 
scale, the coefficient in degree $N$ is the Euler characteristic $\chi(M)$. That this invariant admits such an expression is the Chern-Gauss-Bonnet theorem, proved by Chern \cites{chern, chern2}  by integrating certain canonical differential 
forms  on $M$ and its tangent sphere bundle $SM$, derived from  the Cartan apparatus of curvature and connection forms. The coefficients of the lower 
degree terms arise similarly  \cite{cms}. The tube coefficients may be localized by integrating these forms over subsets, and extended to subsets $M$ more general than smooth submanifolds, yielding Federer's theory of curvature measures,  formulated in terms of normal cycles in \cite{zahle}.

From a different perspective, the tube coefficients coincide up to scale with natural extensions of the {\it intrinsic volumes} $\mu_k$ of Hadwiger 
\cite{hadwiger}. These are the 
principal examples of the concept of {\it convex valuation}. Alesker's recent work \cites{ale icm, ale vals 1, ale vals 2, ale vals 3, ale survey, crm, alesker_bernig12} introduces for any smooth manifold $M$ the space $\calV(M)$ of {\it smooth valuations on } $M$, equipped with a natural commutative product. With respect to this product, the intrinsic volumes then appear (up to scale) as powers of $\mu_1$. A smooth immersion $M \hookrightarrow N$ induces a restriction homomorphism $r_{MN}:\calV(N) \to \calV(M)$.  Combining these facts with the Nash embedding theorem, and taking $N$ to be euclidean space, Alesker  observed that Weyl's theorem  yields a canonical embedding  $i_M:\RR[t]/(t^{\dim M + 1}) \hookrightarrow \calV(M)$ associated to any smooth Riemannian $M$, with generator $t$ identified with $\restrict {\mu_1}M$. This system of embeddings of algebras is obviously {\it reproductive} in the sense that if $P \hookrightarrow M$ is again an isometric immersion, then $i_P = r_{PM}\circ i_M$. The image of $i_M$ is the {\it Lipschitz-Killing algebra} $\calLK(M)$.

The fundamental impetus  for the present paper is to understand these phenomena in purely valuation-theoretic terms, and without reference to existence of isometric immersions in euclidean spaces. We accomplish this by introducing a fundamental new structure in Riemannian geometry, a further refinement of the general picture described above. In order to describe it we recall that any smooth valuation  may be localized, albeit non-uniquely.
The resulting space $\calC(M)$ of {\it curvature measures on $M$} carries the structure of a module over the algebra $\calV(M)$. This language is described in Section
 \ref{subsect:curvs n vals} below.
We  define the space $\calR$ of {\it Riemannian curvature measures}, abstracted from the space of summands of the tube coefficients described above. An element $\Psi\in \calR$ is an object that assigns to any smooth Riemannian manifold $M$ a concrete curvature measure $\Psi^M$ on $M$, giving rise to a canonical subspace $\calR(M) \subset \calC(M)$. 
%
Our  main results, given in Section \ref {sect:riem}, may be stated in general terms as follows.
We describe the universal behavior of the elements of $\calR$ under isometric immersion of one Riemannian manifold into another (Theorem \ref{thm:isometric 1}), and show that  the Lipschitz-Killing curvatures are precisely the elements invariant under all such immersions (Theorem \ref{thm:LK isometric invariance}).
This is accomplished  via a natural 
identification \eqref{eq:C code}
of $\calR$ with the space $\RR[[\xi,\eta]]$ of bivariate formal power series. 
We then  give an explicit description of an action of $\RR[t]$  in these terms, reflecting the universal action of $\calLK(M)$ on $\calR(M)$ (Theorem \ref{thm:LK}). 

 The arguments in Section 3 correspond to the soft part of the {\it template method}, a familiar procedure in integral geometry: one proves the existence of a formula of a certain type (typically the hard part), then evaluates the constants by examining enough special cases, or templates (the soft part). Surprisingly, the cases of spheres of varying radius are enough. The technical heart of the  paper is Section \ref{sect:lemmas}, giving the proofs of Lemmas \ref{lem:exist iso} and \ref {lem_module_existence} (the hard part). There we adapt the classical  method of moving frames to display the geometric processes of Alesker multiplication by $\mu_1$, and of isometric immersion, in terms of  formal models based on the Cartan apparatus. We lay the foundations for these models in Section \ref{sect:cartan}.

Key to recent progress in integral geometry is the fact that if $M$ admits a group $G$ of isometries acting transitively on $SM$, then Alesker multiplication on the spaces $\calV^G(M), \calC^G(M)$ of $G$-invariant valuations and curvature measures is intimately related to the array of kinematic formulas for $(M,G)$ (\cite{bfs}, Section 2). Since necessarily $\calR(M) \subset \calC^G(M)$, and $\calLK(M)\subset \calV^G(M)$, the structure studied here represents a universal component of any such array.
 The final Section \ref{sect:hermitian} applies this observation  to the case of complex projective (or hyperbolic) spaces $\CC P^n$. Stabilizing by taking the dimension $n \to \infty$, it turns out that $\calR(\CC P^\infty)$ is a faithful copy of $\calR$, and may be characterized as the space of all invariant curvature measures  that enjoy the fundamental geometric property of {\it angularity}. This space admits a natural basis $\Delta_{kp}$ adapted to the complex structure, distinct from the natural basis for $\calR$. 
 In Theorem \ref{thm:t action} we  translate the formulas of  Theorem \ref{thm:LK} in terms of the $\Delta$ basis, by means of certain simple yet remarkable transforms $\calL, \calM$ on the space of bivariate power series. These transforms already played a role in Section 3.3 of \cite{bfs}, although the relation between that appearance and this one remains mysterious.

\section{Preliminaries}\label{sect:prelims}
%

\subsection {Curvature measures and valuations}\label{subsect:curvs n vals}  A more detailed account of the notions in the present section appears in Section 2.2 of \cite{bfs}.

Although the theory of smooth valuations on a manifold $M$  is entirely independent of  orientation or orientability, it will be convenient to assume that $M$ is oriented. With this assumption we can frame the discussion in terms of integration,  over the normal cycles associated to sufficiently regular subsets of $M$, of smooth differential forms on the cosphere bundle of $M$. 
If the orientation is switched to its opposite, then both the differential forms that we consider and the normal cycles change sign, so that the resulting set function is unchanged. If $M$ is not oriented (even unorientable) it is possible to recast the whole theory  in terms of differential forms twisted by the orientation line bundle of $M$, but, since the theory is essentially local in nature, one may alternatively avoid any loss of generality by working on the orientation double cover of $M$ if needed.

Another simplification available in the present Riemannian context is to replace the cosphere bundle by the sphere bundle $SM$. Let $\dim M = m+1$. In this language, we recall  that any 
closed submanifold with corners (which we refer to henceforth as a {\it simple
smooth polyhedron}) 
 $P \subset M$, admits a {\it normal cycle } $\nc(P)$, a closed integral current of dimension $m$ in the tangent sphere bundle $SM$. The current $\nc(P)$ is {\it Legendrian}, in the sense that it annihilates any multiple of the contact (canonical) 1-form  (and hence also any multiple of its exterior derivative). If $M $ is a submanifold of $N$, we distinguish between the normal cycles with respect to the two different ambient spaces by the notations $\nc_M,\nc_N$.
One may assign to each pair $(\psi, \phi)\in \Omega^m(SM) \oplus \Omega^{m+1}(M)$ the {\bf curvature measure} $\Psi$ that associates to such $P$ the signed measure 
\begin{equation}\label{eq:def curv}
\Psi(P,\cdot):= \pi_* \left(\nc(P) \with \psi \right) +  \int_{\cdot \cap P} \phi.
\end{equation}
We denote this element $\Psi$ by
$[\psi,\phi]$.
The space of all such curvature measures on $M$ is denoted $\calC(M)$.
Such a pair $(\psi,\phi)$ determines also  a {\bf valuation} given by $\mu(P):=  \int_{\nc(P)} \psi  +  \int_{ P} \phi $ for {\it compact } submanifolds with corners $P$. The space of all such set functions is denoted $\calV(M)$, and the assignment $\Psi \mapsto \mu$ is the {\bf globalization map}
$$
\glob:\calC(M) \to \calV(M).
$$

The classical examples are the Federer curvature measures $\Phi_0,\dots, \Phi_{m+1}\in \calC(\RR^{m+1})$ \cites{cm, zahle} and the corresponding intrinsic volumes $\mu_i := \glob \Phi_i$. The domain of geometric shapes $P$ subject to these set functions may be enlarged to the class of sets with positive reach, or still larger classes \cite{fu sand}. However, these extensions are irrelevant to the present paper, in which we emphasize the set functions $\Psi, \mu$, and the $P$ play the role of test objects.

If $\Psi \in \calC(M)$, and $P_0,P_1,P_2,\dots$ are  closed submanifolds with corners such that $\nc(P_i) \to c \nc(P_0)$ in the flat metric topology, then the associated signed  measures on $M$ converge weakly, which we write as
\begin{equation}\label{eq:weak convergence}
\Psi(P_i,\cdot) \rightharpoonup c\Psi(P_0,\cdot)
\end{equation}
It is natural to think of this process in terms of the specialization of a  family of constructible functions \cite{fu-mcc}. In the simplest instance, the $P_i$ are $k$-dimensional spheres of radii $r_i \to 0$ and common center $x$, and $P_0= \{x\}$, in which case $c = 1+ (-1)^k$.

If $f:M \to N$ is a smooth immersion then there are restriction (or pullback) maps $\calC(N) \to \calC(M)$ and $\calV(N) \to \calV(M)$, both of which we denote by $f^*$, given by
$$
(f^*\Psi) (P, E) := \Psi(f(P),f(E)),\quad (f^*\mu)(P): = \mu(f(P)).
$$
Clearly these pullbacks maps commute with globalization.
The point is that the pulled back objects may again be represented by differential forms on the domain manifolds $M, SM$. We carry this out explicitly in the special case of an isometric immersion of Riemannian manifolds in Proposition \ref{prop:iota*} below. We will also use the standard notation
$$
(f_*m)(E):= m(f\inv(E))
$$
for the pushforward of a (signed) measure $m$ by a map $f$. Thus in the circumstances above
$$
f_*((f^*\Psi)(A,\cdot))(E)= \Psi(f(A),E).
$$

The main facts that we use in this paper from the theory of valuations are summarized in the following.

\begin{theorem}[\cites{ale icm, fu 15}]\label{thm:product} \
\begin{enumerate}
\item The space $\calV(M)$ admits a natural commutative multiplication (Alesker product), with the Euler characteristic $\chi$ acting as the multiplicative identity. Furthermore $\calV(M)$ acts on $\calC(M)$ in a natural way, compatible with the product of valuations, i.e. if $\mu \in \calV(M), \Psi \in \calC(M)$ then $\glob(\mu \cdot \Psi) = \mu \cdot\glob(\Psi)$. If $f$ is a smooth immersion as above then $f^*$ is an algebra and module homomorphism, i.e. if $\mu, \nu \in \calV(N), \Psi \in \calC(N)$ then
$$
(f^*\mu)\cdot (f^*\nu) = f^*(\mu \cdot \nu), \quad (f^*\mu)\cdot (f^*\Psi) = f^*(\mu \cdot \Psi). 
$$
\item Suppose $X\subset M$ is a compact simple smooth polyhedron, and $P\times M \to M$ is a smooth proper family of diffeomorphisms $\varphi_p:M\to M, \ p \in P$, equipped with a smooth measure $dp$. Suppose further that the map $P \times S^*M \to S^*M$, induced by the derivative maps $\varphi_{p*}:S^*M \to S^*M$, is a submersion.
 Then $\nu(A):= \int_P \chi( \varphi_p(X) \cap A) \, dp$ defines a smooth valuation $\nu \in \calV(M)$. Given $\mu \in \calV(M), \Psi \in \calC(M)$ we have
\begin{align*}
(\nu \cdot \mu) (A) &= \int_P  \mu(\varphi_p(X) \cap A) \, dp, \\
(\nu \cdot \Psi) (A,E)  &= \int_P  \Psi(\varphi_p(X) \cap A,E) \, dp. \qquad\qquad \square
\end{align*}

\end{enumerate}
\end{theorem}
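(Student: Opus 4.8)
Since Theorem \ref{thm:product} collects foundational results of Alesker and of \cite{fu 15}, what follows is a sketch of the underlying constructions rather than a self-contained argument. The plan is to build the product on a dense subclass of valuations, verify the asserted properties there, and then extend by continuity to all of $\calV(M)$.

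First I would recall the representation $\mu = \glob[\psi,\phi]$ of a smooth valuation by a pair of differential forms on $SM$ and $M$, together with Alesker's theorem that the valuations of the integral-geometric form
\[
\mu_{X,P}(A) = \int_P \chi\bigl(\varphi_p(X)\cap A\bigr)\, dp,
\]
with $(P,\{\varphi_p\})$ satisfying the submersivity hypothesis of part (2), span a dense subspace of $\calV(M)$; establishing that such $\mu_{X,P}$ is genuinely a smooth valuation is itself the content of the first assertion in (2), and is where the submersivity is first used. On this subclass I would \emph{define} the product by the formula in part (2),
\[
\mu_{X,P}\cdot \mu_{Y,Q}(A) = \int_{P\times Q}\chi\bigl(\varphi_p(X)\cap\psi_q(Y)\cap A\bigr)\, dp\, dq,
\]
which is manifestly commutative, has $\chi=\mu_{\{\mathrm{pt}\}}$ as a unit, and is associative because the threefold product is given by the corresponding symmetric triple integral.

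The main obstacle is precisely well-definedness: one must show the double integral depends only on the valuations $\mu_{X,P}$ and $\mu_{Y,Q}$, not on the chosen families, and that it extends continuously. Here I would invoke the two available technologies. Alesker's approach passes to wave-front-set estimates and a Gelfand-type transform identifying $\calV(M)$ with a suitable space of currents, reducing the matter to the affine model $M=\RR^n$, where the product had already been constructed by intersection of the normal cycles of convex bodies. Fu's approach in \cite{fu 15} is direct and geometric: it realizes the product via a fiber product $\nc(X_1)\boxtimes\nc(X_2)$ of normal cycles inside $SM\times_M SM$, whose existence as a Legendrian integral current of the correct dimension rests on a slicing argument, the submersivity hypothesis being exactly the transversality that validates the slicing. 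Either way, once the product descends to $\calV(M)$, the localized version of the same construction --- intersecting with a fixed Borel set $E$ before globalizing --- yields the module action of $\calV(M)$ on $\calC(M)$, and the compatibility $\glob(\mu\cdot\Psi)=\mu\cdot\glob\Psi$ follows on taking $E=M$.

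Finally, for a smooth immersion $f\colon M\to N$, the identities $(f^*\mu)\cdot(f^*\nu)=f^*(\mu\cdot\nu)$ and $(f^*\mu)\cdot(f^*\Psi)=f^*(\mu\cdot\Psi)$ express the naturality of the fiber-product construction under $f$. Working once more on the dense subclass, they reduce to the set-level identity $f\bigl(\varphi_p(X)\cap\psi_q(Y)\cap A\bigr)=\varphi_p(X)\cap\psi_q(Y)\cap f(A)$ in $N$ (valid locally, which suffices since curvature measures and the product are local) together with the compatibility of the intersection current upstairs with its counterpart on $SN$. I expect the bookkeeping of the discrepancy between $\nc_M$ and $\nc_N$ under the immersion, rather than any new idea, to be the fussiest point; the analogous computation in the isometric case is carried out explicitly in Proposition \ref{prop:iota*} below.
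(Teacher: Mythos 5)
The paper does not prove this statement at all: Theorem \ref{thm:product} is quoted as known background from Alesker's work and from Fu's intersection-theory paper (note the closing $\square$ in the statement), so there is no internal argument to compare yours against, and a sketch deferring to those sources is the appropriate register. Judged on its own terms, however, your plan has a structural gap. You propose to \emph{define} the product by the kinematic double integral on the subclass of valuations $\mu_{X,P}$ and then extend by continuity. This requires (i) that such integral-geometric valuations span a dense subspace of $\calV(M)$ for an arbitrary smooth manifold $M$, and (ii) well-definedness and continuity of the double integral on that span; neither is established in the cited sources in this generality, and the logic there runs in the opposite direction. The product is constructed directly at the level of representing differential forms --- the Alesker--Bernig fiber-integration formula, recalled in the present paper as Theorem \ref{thm:def mult}, or equivalently Fu's fiber product of normal cycles in $SM\times_M SM$ --- with independence of the choice of representing pair $(\psi,\phi)$ as the hard step; the formula in part (2) is then \emph{derived} as a property of this product rather than used as its definition. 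Your sketch gestures at these technologies for ``well-definedness,'' but invoking them already presupposes the construction you were trying to bypass, so the density-plus-extension route adds an unproved hypothesis without removing any of the real work.

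Two smaller points. First, the module structure on $\calC(M)$ is not merely ``the localized version obtained by intersecting with a Borel set $E$ before globalizing'': one must check that the local formula is independent of the representing forms modulo the kernel of the curvature-measure assignment, which is again a form-level statement, not a set-level one. Second, your reduction of $f^*$-naturality to the identity $f\bigl(\varphi_p(X)\cap\psi_q(Y)\cap A\bigr)=\varphi_p(X)\cap\psi_q(Y)\cap f(A)$ does not typecheck as written (the $\varphi_p(X)$ live in $N$ while $A\subset M$; you mean preimages under $f$), and even after that correction it needs injectivity of $f$ and transversality of $\varphi_p(X)$ with $f(A)$ for almost every $p$; for a genuine immersion with self-intersections the pullback $(f^*\mu)(A):=\mu(f(A))$ must be handled through the form-level description, exactly as the paper does for the isometric case in Proposition \ref{prop:iota*}. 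None of this makes your outline wrong as a map of the literature, but as a proof it would need to be reorganized around the form-level construction rather than around density.
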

We will rely on an explicit construction of the Alesker product in terms of the underlying differential forms, originally due to Alesker and Bernig, and stated in Theorem \ref{thm:def mult} below.

\subsubsection{Angular curvature measures}\label{sect:angular} We recall briefly this concept from Section 2.5 of \cite{bfs}. 

If $V$ is an $n$-dimensional real affine space of dimension $n$, we denote by $\Curv(V)\subset \calC(V)$ the space of translation-invariant curvature measures on $V$. Any element of $\Curv(V)$ may be expressed as $[\psi,\phi]$ where both $\psi, \phi$ are translation-invariant. The space $\Curv(V)$ is graded by degree $k\in \{0,\dots ,n\}$.

%

 Let us now suppose further that $V$ is a euclidean space.  A curvature measure $\Psi\in \Curv(V)$ is  {\it angular} if there exists a function $c_\Psi$ on the $k$-Grassmannian of $V$ such that for any convex polytope $P$ 
\begin{equation}\label{eq:def angular}
\Psi (P,\cdot) = \sum_{k=0}^n \sum_{F \in \mathfrak F_k(P)} c_{\Psi} (\vec F) \angle (F,P) \restrict{\mathcal H_k} F
\end{equation}
where $\frak F_k(P)$ is the set of all $k$-faces of $P$, $\angle (F,P)$ is the exterior solid angle of $P$ along $F$, and $\vec F$ is the element of the $k$-Grassmannian parallel to $F$. Any such $\Psi$ is clearly translation-invariant. Any element of $\Curv(V)$ of degree $\ge n-1$ is vacuously angular. 

Passing now to a Riemannian manifold, Section 2.2.2 of \cite{bfs} describes a canonical isomorphism  $\calC(M) 
\leftrightarrow \Gamma(\Curv(TM))$, the space of smooth sections of the  bundle over $M$ whose fiber over $x$ is the space of translation-invariant curvature measures in $T_xM$. Thus $\calC(M)$ inherits a 
natural grading. A curvature measure $\Psi \in \calC(M)$ is said to be  angular iff each value of 
the associated section has this property.

\subsection{The formal  Cartan apparatus}\label{sect:cartan} We give a brief but explicit account of a formal algebraic model for the calculus of moving frames on an oriented Riemannian manifold, implicitly used in classical Riemannian geometry, notably the work of Chern \cites{chern, chern2}. In these and related calculations we adopt the usual summation convention.

\subsubsection
{$\calA_m, \bar \calA_m, \bar \calA_m^+,  \calB_m,  \calB_m^+$, and their realizations}
Let $\calA_m$ denote the anticommutative  graded algebra with generators 
$$
\theta_i, \quad \omega_{ij}= -\omega_{ji}, \quad \Omega_{ij}=-\Omega_{ji}, \quad 0\le i,j\le m,
$$
of respective degrees 1,1, and 2, equipped with the left action of the group $O(m+1)\owns g$ given by
\begin{align}
  L'_g \theta_i & := g_{j i} \theta_{j}\notag\\ 
  L'_g \Omega_{ij} & := g_{k i}g_{l j} \Omega_{kl}\label{O_action}\\ 
  L'_g \omega_{ij} & := g_{k i}g_{l j} \omega_{kl} \equiv g_{0i} g_{lj} \omega_{0l} + g_{ki} g_{0j} \omega_{k0} \mod \langle\omega_{ij}: 0\notin \{i,j\}\rangle.\notag
\end{align}
This algebra admits the formal differential
\begin{align}
   d'\theta_{i} &:= -\omega_{ij}  \theta_j\notag\\
  d'\omega_{ij} & := - \omega_{ik} \omega_{kj}  + \Omega_{ij} \label{structure_equations}\\
    d'\Omega_{ij} & := \Omega_{ik} \omega_{kj} -\omega_{ik} \Omega_{kj},\notag
\end{align}
modeled on the Cartan structure equations, where we follow the conventions of \cite{bishop_crittenden64}, p. 100.
Note that ${d'}^2\ne 0$, since we have not imposed relations corresponding to the Bianchi identity, but this property is unnecessary for our purposes.


Put $\bar \calA_m$ for the quotient obtained by setting all $\omega_{ij}=0$ unless $0\in \{i,j\}$. There is an obvious inclusion map $\bar \calA_m \hookrightarrow \calA_m$. The restriction to $\bar \calA_m$ of the action of $h \in O(m)$ by fixing the $0$ coordinate, denoted $L_h$, descends to $\bar \calA_m$ and intertwines the inclusion and quotient maps. We may define the differential $d$ on $\bar \calA_m$, and also endomorphisms $\tilde L_g, g \in O(m+1)$, by precomposing $d',L'$ with the inclusion and postcomposing with the quotient map. Clearly 
\begin{equation}\label{eq:tilde L}
\tilde L_{gh} =\tilde  L_g\circ L_h, \quad \tilde L_{hg} =  L_h\circ \tilde L_g, \quad d \circ L_h= L_h \circ d
\end{equation}
 for any $ h \in O(m)$.
Put $\bar \calA_m^+\subset \calA_m$ for the subspace of all  $\phi \in \bar \calA_m$ such that $L_g \phi = (\det g) \phi$ for $g \in O(m)$. For $0\le k\le m, 0\le 2p \le k$, we define the elements
\begin{equation}\label{def_hatPhi} 
\bar \calA_m^+\owns\phi_{kp} := \sum_{\pi} \sgn(\pi) \Omega_{\pi_1 \pi_2} \cdots  \Omega_{\pi_{2p-1} \pi_{2p}}  \theta_{\pi_{2p+1}} \cdots  \theta_{\pi_k} \omega_{\pi_{k+1},0} \cdots   \omega_{\pi_{m},0} 
\end{equation}
where the sum extends over all permutations of $\{1,\ldots,m\}$.

Put  $ \calB_m$ for the further quotient of $\bar \calA_m$ obtained by setting all remaining $\omega_{ij}=0$. Now the action of $O(m+1)$ descends.
Put $ \calB_m^+$ for the subspace of all $\psi \in  \calB_m$ such that $L_g \psi = (\det g) \psi, \ g\in O(m+1)$. Define
\begin{equation}\label{def_psi}
 \calB_m^+\owns \psi_{p} := \sum_{\pi} \sgn(\pi) \Omega_{\pi_0 \pi_1}  \cdots \Omega_{\pi_{2p-2}\pi_{2p-1} }  \theta_{\pi_{2p}}  \cdots \theta_{\pi_{m}}
\end{equation}
where the sum extends over all permutations of $\{0,\ldots,m\}$. 


\begin{proposition} \label{prop_inv_elements1} Modulo $\theta_0$, the space of elements of formal degree $m$ in $\bar \calA_m^+$ is spanned by the $\phi_{kp}$. The space of elements of formal degree $m+1$ in $ \calB_m^+$ is spanned by the $\psi_{p}$.
\end{proposition}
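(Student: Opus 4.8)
The plan is to recognize both assertions as the computation of the $\det$-isotypic (skew-invariant) subspace of an explicit graded $O(n)$-module, and to settle them by classical invariant theory for the orthogonal group. First I would pin down the module structures. Killing all the omitted generators makes $\calB_m$ the free graded-commutative algebra on the odd degree-one generators $\theta_0,\dots,\theta_m$, which span a copy of the standard module $V=\RR^{m+1}$, and the even degree-two generators $\Omega_{ij}$, which span a copy of $\Lambda^2 V$; hence its degree-$(m+1)$ part is $\bigoplus_{a+2b=m+1}\Lambda^a V\otimes\Sym^b(\Lambda^2 V)$, with $n=m+1$. Likewise, since $\theta_0$ is $O(m)$-fixed and odd of degree one, working modulo $\theta_0$ makes $\bar\calA_m$ the free graded-commutative algebra on $\theta_i$ and $\omega_{0i}$ (two copies of $V=\RR^m$, odd, degree one), on $\Omega_{0i}$ (one copy of $V$, even, degree two) and on $\Omega_{ij}$, $1\le i<j\le m$ (a copy of $\Lambda^2 V$, even, degree two); its degree-$m$ part is a direct sum of summands $\Lambda^{f}V\otimes\Lambda^{g}V\otimes\Sym^{c}V\otimes\Sym^{e}(\Lambda^2 V)$ with $f+g+2c+2e=m$ and $n=m$. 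In both situations the $\phi_{kp}$ and the $\psi_p$ are, up to scalars, the images of complete contractions of the generators against the Levi--Civita tensor $\epsilon$ of $\RR^n$, and in particular lie in $\bar\calA_m^+$ resp. $\calB_m^+$ (that an alternating sum transforms by $\det$ is immediate).

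The crucial bookkeeping step is to attach to each monomial the number $q$ of its vector indices (each $\theta$, each $\omega_{0i}$ and each $\Omega_{0i}$ contributes one, each $\Omega_{ij}$ contributes two) and compare with the degree. One finds $q\le n$ in every case, with equality always in the $\calB_m$ situation, and in the $\bar\calA_m$ situation precisely when no $\Omega_{0i}$ occurs (indeed the number of $\Omega_{0i}$-factors equals $n-q$). By the first fundamental theorem for $O(n)$, together with the standard description of its skew-invariant tensors, the $\det$-isotypic component of any of the summands above vanishes unless $q=n$, in which case it is one-dimensional and consists of the multiples of the single complete $\epsilon$-contraction of all $n$ indices. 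In the $\bar\calA_m$ case this forces the $\Omega_{0i}$ generators not to appear, leaving only $\Omega_{ij}$ ($i,j\ge 1$), $\theta_i$ and $\omega_{i0}$; reading off how many of each occur produces exactly the parameter range $0\le 2p\le k\le m$, and the resulting contraction is a multiple of $\phi_{kp}$. In the $\calB_m$ case the analogous read-off gives the range $0\le 2p\le m+1$ and a multiple of $\psi_p$.

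Finally I would verify that the $\phi_{kp}$ and $\psi_p$ are nonzero, so that they do span (and are not merely zero inside a one-dimensional space). Each is an alternating sum, over partitions of the index set into pairs and singletons, of pairwise distinct monomials in the free generators, the coefficient of each monomial being well-defined independently of the order in which the defining permutation produces it, exactly as in the definition of the Pfaffian; since the underlying generators are free, there is no cancellation and the expression is nonzero. Together with the previous two steps this proves both statements. The main obstacle is the second step: getting the representation-theoretic input right, namely handling the super-graded structure correctly (odd generators contribute exterior, not symmetric, powers) and invoking the orthogonal FFT in the precise form that yields both the "only $\epsilon$-contractions survive" statement and the multiplicity-one statement; the accompanying check that the various ways of distributing the $\epsilon$-slots among the tensor factors all agree up to sign is routine but must be carried out with care.
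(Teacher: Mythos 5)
Your proposal is correct and takes essentially the same route as the paper's proof: decompose the relevant graded piece into tensor-power summands of the standard $O(n)$-module and invoke the First Fundamental Theorem to see that the determinant-isotypic part is spanned by the single complete $\epsilon$-contraction, which is exactly $\phi_{kp}$ resp.\ $\psi_p$. Your index-counting step ruling out the $\Omega_{0i}$ generators, and the Pfaffian-style non-cancellation check of nonvanishing, only make explicit points that the paper's terser argument leaves implicit.
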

\begin{proof} We prove the first assertion. The proof of the second is similar and simpler. Given an $O(m)$ module $M$, we 
refer to the submodule on which the group acts by multiplication by the determinant as the {\it determinant submodule}.

The subspace  of $\bar\calA_m^+$ of elements of degree $m$ decomposes by the degree $p$ in the $\Omega$ and the degree $k-2p$ in the $\theta$. We wish to show that, modulo $\theta_0$, the subspace $\calC_{kp}$ thus described is spanned by $\phi_{kp}$. Put for simplicity $V:= \RR^m$. Let $O(m)$ act on $V$ in the standard way, and on $\fss\foo(m), \fgg\fll(m)$ by conjugation. Clearly $\calC_{kp}/(\theta_0)$ is isomorphic to the $O(m)$-submodule  of the determinant submodule of $\left(\fss\foo(m)^{\otimes p}\otimes V^{\otimes( k-2p)}\otimes V^{\otimes (m-k) }\right)^*$ that consists of elements that are symmetric in the $\fss\foo(m)$ factors and antisymmetric in each of the two clusters of $V$ factors. Pulling back via the projection $\fgg\fll(m) \to \fss\foo(m)$, $M\mapsto (M-M^t)/2$,  in fact
$\calC_{kp}$ is isomorphic to a submodule  of the corresponding submodule of $\left(\fgg\fll(m)^{\otimes p}\otimes V^{\otimes (k-2p )}\otimes V^{\otimes( m-k) }\right)^*\simeq V^{\otimes m *} $.

The First Fundamental Theorem of invariant theory for $SO(m)$ (\cite{procesi07}, Chapter 11.2) states that the algebra of such invariants in variables $y_i\in V$ is generated by functions of the form
 $(y_1,\dots ,y_m)\mapsto \det [y_1\dots y_m]$ and $\langle y_i,y_j\rangle$. In our case the degree is $m$, so if the latter  
appear in any term then all other factors must have the same form. Since these expressions are $O(m)$-invariant, this cannot happen. Thus the determinant submodule is spanned by $y_1\otimes \dots \otimes y_m \mapsto \det [y_1\dots y_m]$. This corresponds  to $\phi_{kp}\in \calC_{kp}$ under the identifications above.
\end{proof}

\subsubsection{Realization maps}
Now let $M$ be a smooth oriented Riemannian manifold of dimension $m+1$, with frame bundle $\hat \pi:FM\to M$ and tangent sphere bundle $\pi: SM \to M$. Thus the total space $FM$ consists of all orthonormal frames $b=(b_0,\dots,b_m)$ for $T_xM$, $x = \hat \pi(b)\in M$.  We may regard $FM$  also as a bundle $z:FM\to SM$ by taking $z(b_0,\dots,b_m):= b_0$. The bundles $\hat \pi, z$ are principal bundles, with groups $O(m+1),O(m)$ respectively. We denote the right action in each case by $R$.

 There is  a well-defined {\it realization map} $\rho :\calA_m \to \Omega^*(FM)$ given by taking the generators of $\calA_m$ to the coframe forms, connection forms, and curvature forms associated to a given $b \in FM$  \cites{bishop_crittenden64, chern}. The  Cartan structure equations state that this map intertwines the formal differential above and  the exterior  differential on $\Omega^*(FM)$. It also intertwines the formal left action $L'$ with the action on $\Omega^*(FM)$ induced by pullback under the right action of the structure group. 
 
 If $P$ is a smooth manifold, with $\bar b: P \to FM$ a smooth map (typically a section of a bundle with total space $FM$),  and $g \in O(m+1)$, then
 \begin{equation}\label{eq:rho covariance}
(R_g\circ\bar b)^* \circ \rho = \bar b^* \circ \rho\circ L'_g. 
 \end{equation}

\begin{definition} \label{def:rho bar} Given  $b=(b_0=\xi_0,b_1,\dots,b_m) \in FM$ we  define $ \bar \rho_b:\bar\calA_m \to \bigwedge^* T_{\xi_0}(SM)$ as follows.
Let $\bar b$ be a local section of the bundle $z$ such that $\bar b(\xi_0) = b$. Then
$\bar \rho_b$ is the restriction to $\bar \calA_m$  of the evaluation at $\xi_0$ of $\bar b^* \circ \rho$.

Define $\bar \rho = \bar \rho_M: \bar \calA_m^+ \to \Omega^*(SM)$ by setting its value at any given $\xi_0\in SM$ by restricting $\bar \rho_b$ to $\bar \calA_m^+$, where  $b=(b_0=\xi_0,b_1,\dots,b_m)$ is any positive adapted frame. 

Abusing notation, we use the same symbol $\bar \rho$ to denote the corresponding maps $\calB, \calB^+ \to \Omega^*(M)$.
 \end{definition}

 \begin{lemma}\label{lem:rho g} The maps $\bar \rho_b, \bar \rho$ are well-defined. If $g \in O(m)$ then $\bar \rho_{R_g b} = \bar \rho_b \circ L_g$.
\end{lemma}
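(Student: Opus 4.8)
The plan is to unwind the definitions. For well-definedness of $\bar\rho_b$, the only choice made is that of the local section $\bar b$ of $z$ with $\bar b(\xi_0)=b$; I must show the restriction to $\bar\calA_m$ of the evaluation at $\xi_0$ of $\bar b^*\circ\rho$ is independent of this choice. Two such sections $\bar b,\bar b'$ agree at $\xi_0$, so they differ by a map into the structure group $O(m)$ that is the identity at $\xi_0$; equivalently, writing $\bar b' = R_{h}\circ \bar b$ for a smooth $h\colon U\to O(m)$ with $h(\xi_0)=\id$. By \eqref{eq:rho covariance}, $(\bar b')^*\circ\rho = \bar b^*\circ\rho\circ L'_{h}$, but here $h$ varies, so I first restrict to a fibre-wise statement: evaluating at $\xi_0$, and using that $\rho$ sends generators to coframe, connection, and curvature forms, the pullback by $R_h$ at a point where $h=\id$ differs from the identity only through $dh$, which enters exactly in the connection-form entries $\omega_{ij}$ with $i,j\neq 0$ — and these are precisely the generators killed in passing to $\bar\calA_m$. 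This is the crux of why the quotient $\bar\calA_m$ (and not $\calA_m$) is the right domain: on $\bar\calA_m$ the only surviving connection forms are the $\omega_{i0}$, whose transformation under $O(m)$ is tensorial (no $dh$ term), as recorded in the third line of \eqref{O_action}. Hence evaluation at $\xi_0$ of $\bar b^*\circ\rho$ restricted to $\bar\calA_m$ is unchanged, proving $\bar\rho_b$ is well-defined.

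For $\bar\rho=\bar\rho_M$, the additional choice is that of the positive adapted frame $b=(b_0=\xi_0,b_1,\dots,b_m)$ completing $\xi_0$; any two such differ by $R_g$ for some $g\in O(m)$ with $\det g=1$ (orientation-preserving, fixing $b_0$). So well-definedness of $\bar\rho$ on $\bar\calA_m^+$ follows immediately from the covariance formula $\bar\rho_{R_gb}=\bar\rho_b\circ L_g$ together with the defining property $L_g\phi=(\det g)\phi=\phi$ of elements of $\bar\calA_m^+$ for $g\in SO(m)$. Thus the two assertions are linked, and the covariance formula is really the main content.

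To prove $\bar\rho_{R_gb}=\bar\rho_b\circ L_g$ for $g\in O(m)$: pick a local section $\bar b$ of $z$ with $\bar b(\xi_0)=b$; then $R_g\circ\bar b$ is a local section with value $R_gb$ at $\xi_0$, so by well-definedness $\bar\rho_{R_gb}$ is computed from $(R_g\circ\bar b)^*\circ\rho$ evaluated at $\xi_0$ and restricted to $\bar\calA_m$. Now apply \eqref{eq:rho covariance} with the constant group element $g$: $(R_g\circ\bar b)^*\circ\rho = \bar b^*\circ\rho\circ L'_g$. Restricting to $\bar\calA_m$ and evaluating at $\xi_0$, the right side becomes $\bar\rho_b$ precomposed with the descent of $L'_g$ to $\bar\calA_m$; by construction in the excerpt (the definition of $\tilde L_g$ and the relation $\tilde L_{hg}=L_h\circ\tilde L_g$, here with $h=g\in O(m)$, so $\tilde L_g = L_g$ on $\bar\calA_m$) this descent is exactly $L_g$. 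This gives the claimed identity.

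The main obstacle is the well-definedness of $\bar\rho_b$: one must argue carefully that the $dh$ contribution from a varying frame change lands only in the discarded connection forms $\omega_{ij}$ ($i,j\neq 0$), i.e. that the quotient defining $\bar\calA_m$ is precisely engineered to absorb the non-tensorial part of the connection's transformation law. Once this is isolated, everything else is formal manipulation with \eqref{eq:rho covariance}, the definitions of $L_g$ on $\bar\calA_m$, and the determinant-equivariance defining $\bar\calA_m^+$. I do not expect to need the structure equations \eqref{structure_equations} here, only the covariance of the realization map.
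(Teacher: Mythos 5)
Your proposal is correct, and its skeleton coincides with the paper's: first independence of $\bar\rho_b$ from the choice of extension $\bar b$, then the covariance $\bar\rho_{R_gb}=\bar\rho_b\circ L_g$ from \eqref{eq:rho covariance} together with the fact that $L'_g$ descends to $L_g$ on $\bar\calA_m$ for $g\in O(m)$, and finally well-definedness of $\bar\rho$ on $\bar\calA_m^+$ from determinant-equivariance under $SO(m)$. Where you genuinely diverge is at the crux, the independence of the extension: the paper checks this generator by generator, observing that the values on $\theta_i,\Omega_{ij}$ are manifestly intrinsic and then identifying $\bar\rho_b(\theta_0)$ with the contact form $\alpha$ of $SM$ and $\bar\rho_b(\omega_{0i})$ with the interior product of the horizontal lift $\tilde b_i$ with $d\alpha$, so that these values are visibly independent of the extension; you instead use the change-of-section (gauge) law: two extensions agreeing at $\xi_0$ differ by a map $h\colon U\to O(m)$ with $h(\xi_0)=\id$, the tautological and curvature forms transform tensorially, and the only non-tensorial contribution at $\xi_0$ is $h^{-1}dh$, which is $\mathfrak{o}(m)$-valued and therefore hits only the entries $\omega_{ij}$ with $0\notin\{i,j\}$ that are discarded in $\bar\calA_m$. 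Both arguments are sound. Yours makes transparent exactly why the quotient $\bar\calA_m$ is the right domain, and correctly notes that \eqref{eq:rho covariance} (stated for constant $g$) cannot be applied verbatim to a varying $h$ — in a final write-up you should record the pointwise transformation laws explicitly, since that replaces the single sentence the paper devotes to the geometric identification. The paper's route buys something extra that it reuses later: the explicit intrinsic descriptions $\bar\rho(\theta_0)=\alpha$ and $\bar\rho_b(\omega_{0i})=\iota_{\tilde b_i}d\alpha$ (the former is invoked again in Section \ref{sect:lemmas}, where the realization of $\theta_0$ being the contact form is essential), whereas your argument, while cleaner as a pure well-definedness proof, does not by itself produce these formulas. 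You are also right that the structure equations \eqref{structure_equations} are not needed for your version, and your use of \eqref{O_action} and \eqref{eq:tilde L} to see $\tilde L_g=L_g$ on $\bar\calA_m$ for $g\in O(m)$ is exactly what the paper's "follows from \eqref{eq:rho covariance}" tacitly requires.
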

\begin{proof}
We must show that the map $\bar \rho_b$ is   independent of the choice of the extension $\bar b$. This is clearly true of the values  $\bar\rho_b(\theta_i),\bar\rho_b( \Omega_{ij})$. As for $\rho_b(\omega_{0i})$, note first that $\bar \rho_b(\theta_0)$ is the canonical contact 1-form $\alpha$ of $SM$, and hence independent even of the other vectors $b_1,\dots, b_m$ of the frame. Now if $\tilde b_i\in T_{\xi_0}SM$ is the horizontal lift of $b_i \in T_{\pi(\xi_0)}M$ then $\bar \rho_b(\omega_{0i})$ is the interior product of $\tilde b_i$ with $d\alpha$. 

The final conclusion follows from \eqref{eq:rho covariance}, and implies in turn that $\bar \rho$ is well defined.
\end{proof}


\subsubsection{The infinitesimally parallel extension of a frame and the canonical connection of $z$}
\label{sect:infinitesimal parallel}
 It is well known that any frame $b, z(b)= \xi_0 $,  may be extended to a local section $\bar b$ of $z$ such that $\restrict{\bar b^* \circ \rho(\omega_{ij})}{\xi_0} =0 $ if $0 \notin \{i,j\}$. Let us introduce an  explicit terminology for such an extension for later use. Put $x_0: = \pi(\xi_0) \in M$, and let $V\subset M$ be a normal neighborhood of $x_0$. Put $E: V\times S_{x_0} M \to SM$ to be the map induced by parallel translation along geodesics from $x_0$. The {\bf infinitesimally parallel extension} of  $b$ is the adapted moving frame $\bar b$ defined on the neighborhood $\tilde V_{\xi_0}:=E( V\times (S_{x_0} M \setminus \{-\xi_0\}))$  of $\xi_0$, constructed as follows. For $\xi \in S_{x_0}M \setminus \{-\xi_0\}$, define 
$$
\rot^{\xi_0}_{\xi} \in SO(T_{x_0}M) \ {\rm \quad  by \  } \rot^{\xi_0}_{\xi}(\xi_0) = \xi, \ \restrict{\rot^{\xi_0}_{\xi}}{\xi^\perp \cap \xi_0^\perp} = {\rm  identity}.
$$ 
Now set 
$\bar b(\xi) := \rot^{\xi_0}_{\xi} (b):= (\rot^{\xi_0}_{\xi} (b_0),\dots, \rot^{\xi_0}_{\xi} (b_m))$ for $\xi \in S_{x_0} M \setminus \{-\xi_0\}$. In other words, we extend the frame initially to $S_{x_0} \setminus \{-\xi_0\}$ by parallel translation along great circles from $\xi_0$. Now put 
$$
\bar b (E(x, \xi)):= E(x, \bar b(\xi)):= (E(x, \bar b_0(\xi)), \dots , E(x, \bar b_m(\xi))).
$$
The map $b \mapsto \bar b$ is clearly compatible with the right $O(m)$ action on $z:FM \to SM$, i.e.
\begin{equation}
\overline{(R_h b)} = R_h \bar b, \quad h \in O(m).
\end{equation}
Therefore the images of the derivatives $\bar b_*: T_{\xi} SM \to T_b FM$ corresponding to the infinitesimally parallel extensions of all adapted frames $b$ at all points $\xi \in SM$ constitute the family of horizontal subspaces of a {\bf canonical connection } on the principal $O(m)$ bundle $z$.

Consider the standard sphere $S^m \subset \RR^{m+1}$, with standard coordinates $u_0,\dots,u_m$ and standard basis vectors $e_0,\dots,e_m$. For $-e_0 \ne u \in S^m$, put $\barrot_u \in SO(m+1)$ for the element that acts as the identity on vectors perpendicular to $e_0$ and $u$, and such that $\barrot_u(e_0) = u$. Define $\psi_b: S_{x_0}M \to S^m$ by $\psi_b(u_i b_i)_{i=0}^{m} := (u_0,\dots,u_m)$. 
We will need the following obvious facts.

\begin{lemma}\label{lem:rot}  For all $\xi \ne -\xi_0$,
\begin{equation}\label{eq:conjugate rotations}
 \barrot_{\psi_b(\xi)} =  \psi_b \circ \rot^{\xi_0}_{\xi}\circ \psi_b\inv.
 \end{equation}
 If $u\not \perp e_0$, then 
 the map $v\mapsto \barrot_v(u)$
yields a diffeomorphism between small neighborhoods of $e_0, u$.
\end{lemma}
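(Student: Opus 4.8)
The plan is to treat the two assertions separately, in each case simply unwinding the characterizing properties of $\rot^{\xi_0}_\xi$ and $\barrot_v$ as elements of the relevant special orthogonal groups. A preliminary remark is useful: an element of $SO(n)$ that restricts to the identity on a subspace $W$ of codimension $2$ and sends a given unit vector of $W^\perp$ to a given unit vector of $W^\perp$ is unique, since it preserves $W^\perp$, and its restriction there is a determinant-$1$ orthogonal map of a $2$-plane, hence a rotation, and a plane rotation is determined by the image of a single vector. Both $\barrot_v$ and $\rot^{\xi_0}_\xi$ are specified by exactly such a prescription, with $W=\linspan\{e_0,v\}^\perp$ and $W=\xi_0^\perp\cap\xi^\perp$ respectively.

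For \eqref{eq:conjugate rotations}, recall that $\psi_b$ is the restriction to $S_{x_0}M$ of the linear isometry $\Psi\colon T_{x_0}M\to\RR^{m+1}$ carrying $b_i$ to $e_i$; in particular $\Psi(\xi_0)=e_0$, and $\Psi(\xi)=\psi_b(\xi)\ne -e_0$ because $\xi\ne-\xi_0$, so $\barrot_{\psi_b(\xi)}$ is defined. I would then observe that the conjugate $\Psi\circ\rot^{\xi_0}_\xi\circ\Psi\inv$ lies in $SO(m+1)$ (conjugation by an isometry preserves orthogonality and the determinant), that it sends $e_0=\Psi(\xi_0)$ to $\Psi(\xi)=\psi_b(\xi)$, and that it restricts to the identity on $\Psi(\xi_0^\perp\cap\xi^\perp)=\linspan\{e_0,\psi_b(\xi)\}^\perp$. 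By the preliminary remark it must therefore equal $\barrot_{\psi_b(\xi)}$, and restriction to the unit sphere gives \eqref{eq:conjugate rotations}.

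For the second assertion, set $F(v):=\barrot_v(u)$, a smooth map from $S^m\setminus\{-e_0\}$ into $S^m$ with $F(e_0)=\barrot_{e_0}(u)=u$ since $\barrot_{e_0}=\id$. It is enough to check that the differential $dF_{e_0}\colon T_{e_0}S^m=e_0^\perp\to T_uS^m=u^\perp$ is injective --- hence, by equality of dimensions, an isomorphism --- and then to invoke the inverse function theorem. To compute $dF_{e_0}(w)$ for $w\in e_0^\perp$, I would pick a curve $v(t)$ with $v(0)=e_0$ and $\dot v(0)=w$, set $A:=\dt\,\barrot_{v(t)}\in\mathfrak{so}(m+1)$, and determine $A$ from the defining properties of $\barrot$: differentiating $\barrot_{v(t)}(e_0)=v(t)$ at $t=0$ gives $Ae_0=w$, while for $x\in\{e_0,w\}^\perp$ one has $\langle x,v(t)\rangle=O(t^2)$, so $\barrot_{v(t)}(x)=x+O(t^2)$ and $Ax=0$; together with skew-symmetry this forces $A(x)=\langle x,e_0\rangle w-\langle x,w\rangle e_0$. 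Hence $dF_{e_0}(w)=Au=\langle u,e_0\rangle\,w-\langle u,w\rangle\,e_0$, and if this vanishes then, since $\langle u,e_0\rangle\ne0$ by hypothesis, $w$ is proportional to $e_0$; as $w\in e_0^\perp$ this forces $w=0$.

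The computation is elementary throughout. The only points that require a little care are the uniqueness remark pinning down $\barrot$ and $\rot$ from their partial descriptions --- where the determinant-$1$ normalization is exactly what makes these elements well defined --- and the second-order estimate $\langle x,v(t)\rangle=O(t^2)$ used to identify $A$. I do not anticipate a genuine obstacle; as the authors indicate, the lemma collects routine facts.
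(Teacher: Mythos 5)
Your proof is correct. The paper itself offers no argument here — the lemma is introduced as a pair of "obvious facts" — so there is no proof of record to compare against; your route (uniqueness of an element of $SO(m+1)$ prescribed to be the identity on a codimension-two subspace and to carry one given unit vector of its orthogonal complement to another, applied to the conjugate $\Psi\circ\rot^{\xi_0}_{\xi}\circ\Psi\inv$, followed by the inverse function theorem after computing $dF_{e_0}(w)=\langle u,e_0\rangle\,w-\langle u,w\rangle\,e_0$) is a perfectly sound way to verify both claims, and your handling of the excluded antipode and of injectivity of the differential is right. Two small points worth tightening: the step "$\langle x,v(t)\rangle=O(t^2)$, so $\barrot_{v(t)}(x)=x+O(t^2)$" needs one more line, namely that the component of $x$ in the moving plane $\linspan\{e_0,v(t)\}$ has norm $O(t)$ while $\barrot_{v(t)}$ differs from the identity by $O(t)$ in operator norm, whence the displacement of $x$ is $O(t^2)$; and the smoothness of $v\mapsto\barrot_v$ away from $-e_0$ (which you use both for $F$ and for differentiating along the curve $v(t)$) is most quickly justified by the explicit formula $\barrot_v(x)=x-\frac{\langle e_0+v,x\rangle}{1+\langle e_0,v\rangle}(e_0+v)+2\langle e_0,x\rangle v$, which in fact yields the differential $dF_{e_0}$ by direct computation and lets you bypass the curve argument altogether.
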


\subsubsection{The virtual antipodal map} Let $a:SM \to SM$ denote the fiberwise antipodal map. Clearly
\begin{equation}\label{eq:a rho}
\bar\rho \circ L_{-1} = a^* \circ  \bar \rho.
\end{equation}

\subsubsection{The immersed version}\label{sect:immersed} We will also need a version of this apparatus that describes in formal terms an isometric immersion of one Riemannian manifold into another. Let $n >m$ be given. The coordinates in the range $0,\dots,m$ will be treated in an essentially different way from the range $m+1,\dots,n$; we denote the former by Roman letters $i,j,\dots$  from the first half of the alphabet and the latter by Greek letters $\alpha,\beta,\dots$. Roman letters $r,s,t,\dots$ from the second half of the alphabet will take values from the entire range $0, 1,\dots, n$.
Put $\calA_{m,n}:= \calA_n/(\theta_\alpha = 0)$. 
 Introduce new symbols 
$$
\bar \Omega_{ij}:= \omega_{i\alpha}\omega_{j\alpha}, \quad  0\le i,j \le m.
$$
Consider the standardly embedded subgroup $O(m+1) \times O(n-m) \subset O(n+1)$,  where the two factors act respectively on the first $m+1$ and last $n-m$ coordinates. The restriction $L_{g,h}$ of the left action of $O(n+1)$ descends to this quotient.

Define the quotient $\bar \calA_{m,n}$ by setting 
\begin{align}
\label{eq:bar mn relations}\omega_{ij} &= 0 {\rm  \quad unless \ } 0\in \{i,j\}, \\
\notag \omega_{\alpha\beta}&= 0 .
\end{align}
There is a natural inclusion into $\calA_{m,n}$. The group $O(m) \times O(n-m)$ acts by fixing the $0$ coordinate. Put 
$$
\bar \calA_{m,n}^{++} \ ({\rm resp.  \ }  \bar \calA_{m,n}^{+}{\rm )} := \{\phi: L_{g,h} \phi = (\det g ) (\det h)\phi  \ ({\rm   resp.  \ } (\det g ) \phi {\rm )}{\rm \ for \ } (g,h) \in O(m) \times O(n-m)\},
$$
 so that the quotient map takes $\bar \calA_n^+ \to \bar \calA^{++}_{m,n}$.

 Put $\barbar \calA_{m,n}$ to be the subspace $\Omega_{\alpha\beta}= \Omega_{\alpha i} =0$. The $O(m) \times O(n-m)$ action descends to this space. Put
  $\barbar \calA^+_{m,n}= \{\phi \in \barbar \calA_{m,n}: L_{g,h}\phi = (\det g) \phi, (g,h) \in O(m) \times O(n-m)\}$.
 The elements of degree $m$
\begin{equation*}\label{eq:good guys boundary}\phi_{kpl} := \sum_{\pi}  \sgn(\pi) \Omega_{\pi_1 \pi_2} \cdots  \Omega_{\pi_{2p-1} \pi_{2p}}\bar \Omega_{\pi_{2p+1} \pi_{2p+2}} \cdots \bar \Omega_{\pi_{2p + 2l-1} \pi_{2p+2l}}  \theta_{\pi_{2p+2l+1} }\cdots  \theta_{\pi_k} \omega_{\pi_{k+1},0} \cdots   \omega_{\pi_{m},0} 
\end{equation*}
belong to this space.

Define the quotient $ \calB_{m,n}$ by setting  all the remaining $\omega_{ij}$
to zero as well. Thus the only remaining formal connection forms are the $\omega_{i\alpha}$.
The group $O(m+1) \times O(n-m)$ acts on this space. Put $ \calB^+_{m,n}$ for the subspace of all elements $\phi$ such that
$L_{(g,h)}\phi = (\det g) \phi$, in particular the elements of degree $m+1$ 
\begin{equation*}\label{eq:good guys interior}\psi_{pl} = \sum_{\pi}  \sgn(\pi) \theta_{\pi_0} \Omega_{\pi_1 \pi_2} \cdots  \Omega_{\pi_{2p-1} \pi_{2p}}\bar \Omega_{\pi_{2p+1} \pi_{2p+2}} \cdots \bar \Omega_{\pi_{2p + 2l-1} \pi_{2p+2l}}  
\theta_{\pi_{2p+2l+1} }\cdots  \theta_{\pi_m} 
\end{equation*}
{are of this type.} 
\begin{proposition} \label{prop_inv_elements2} Modulo $\theta_0$, the space of elements of formal degree $m$ in $\barbar \calA_{m,n}^+$ is spanned by the $\phi_{kpl}$. The space of elements of formal degree $m+1$ in $ \calB_{m,n}^+$ is spanned by the $\psi_{pl}$.
\end{proposition}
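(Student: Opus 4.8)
The plan is to reduce Proposition \ref{prop_inv_elements2} to an invariant-theoretic statement for the product group $O(m) \times O(n-m)$, exactly paralleling the proof of Proposition \ref{prop_inv_elements1}, and then invoke the First Fundamental Theorem of invariant theory for each factor separately. As before I focus on the first assertion, the second being analogous and simpler (there is one extra $\theta_0$ factor, and the group acting is $O(m+1) \times O(n-m)$, so the $e_0$-direction is no longer singled out among the Roman coordinates).

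First I would set up the bookkeeping. An element of degree $m$ in $\barbar\calA_{m,n}$ that survives the relations \eqref{eq:bar mn relations}, lies in $\barbar\calA_{m,n}$ (so $\Omega_{\alpha\beta} = \Omega_{\alpha i} = 0$), and sits in the determinant submodule, is built from the generators $\theta_i$, $\omega_{i0}$, $\Omega_{ij}$, and $\omega_{i\alpha}$. Since $\bar\Omega_{ij} = \omega_{i\alpha}\omega_{j\alpha}$ by definition, and the only surviving connection forms carrying a Greek index are the $\omega_{i\alpha}$, every occurrence of a Greek index must come paired: the $\omega_{i\alpha}$'s can only enter in contracted pairs $\omega_{i\alpha}\omega_{j\alpha}$ because any $O(n-m)$-invariant must contract away the Greek indices (there is no $\theta_\alpha$ left, and $\det$ on the $(n-m)$-block would require $n-m$ distinct Greek factors, impossible in degree $m < n-m$ unless... — more carefully, one argues as in Proposition \ref{prop_inv_elements1} that the $\det$-type invariant in the Greek block cannot appear because the total Greek degree is then forced to exceed $m$). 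So modulo $\theta_0$ the subspace decomposes by the degree $p$ in the $\Omega_{ij}$, the degree $l$ in the pairs $\bar\Omega_{ij}$ (equivalently $2l$ in the $\omega_{i\alpha}$), and the degree $k - 2p - 2l$ in the $\theta_i$, with the remaining $m - k$ slots filled by $\omega_{i0}$.

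Next I would identify the fixed-$(k,p,l)$ piece, call it $\calC_{kpl}$, with an $O(m)$-submodule of a tensor-power dual: after pulling back along $\mathfrak{gl}(m) \to \mathfrak{so}(m)$ as in the earlier proof, $\calC_{kpl}$ embeds in the determinant submodule of $(V^{\otimes m})^*$, $V = \RR^m$, consisting of elements symmetric within the $p$ curvature pairs, symmetric within the $l$ second-fundamental-form pairs (this symmetry coming from $\bar\Omega_{ij} = \bar\Omega_{ji}$, which follows from anticommutativity of the $\omega_{i\alpha}$), and antisymmetric within each of the two $\theta$/$\omega_{i0}$ clusters. The FFT for $SO(m)$ then says the whole space of such invariants of degree $m$ is spanned by $y_1 \otimes \cdots \otimes y_m \mapsto \det[y_1 \cdots y_m]$, since any appearance of an inner-product invariant $\langle y_i, y_j\rangle$ would, in degree exactly $m$, force all factors to be of inner-product type, contradicting $O(m)$-invariance of the determinant character. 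Unwinding the identifications, this one-dimensional space corresponds precisely to $\phi_{kpl}$, so $\calC_{kpl}$ is spanned by $\phi_{kpl}$, and summing over $(k,p,l)$ gives the claim.

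The main obstacle, I expect, is the step isolating why the $\omega_{i\alpha}$'s can only enter through the contracted pairs $\bar\Omega_{ij}$ and why no $\det$-type $O(n-m)$-invariant in the Greek indices can occur — in other words, controlling the interaction between the two group factors before one is allowed to apply the FFT for $O(m)$ alone. Once one argues cleanly that $O(n-m)$-invariance of a degree-$m$ element (with $m$ potentially comparable to or smaller than $n-m$, but in any case the relevant count is of Greek factors, which is $2l \le m < 2(n-m)$ is not automatic, so this needs the FFT argument for the second factor too: the Greek-block determinant needs $n-m$ distinct Greek factors, but could be padded — here one uses that the total degree is $m$ and the determinant character of $O(n-m)$ cannot be matched, exactly the parity/invariance obstruction from Proposition \ref{prop_inv_elements1}) forces the pairing, the rest is a routine transcription of the previous argument. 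I would therefore write the Greek-block analysis in full and treat the Roman-block analysis as a reference to Proposition \ref{prop_inv_elements1}.
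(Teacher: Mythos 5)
Your route is the same as the paper's: split off the part of the algebra generated by the $\omega_{i\alpha}$, use orthogonal invariant theory in the Greek indices to show these forms enter only through the contractions $\bar \Omega_{ij}=\omega_{i\alpha}\omega_{j\alpha}$, and then rerun the argument of Proposition \ref{prop_inv_elements1} with the extra factors carried along. Two local points need repair, and the first is exactly the step you flagged as the main obstacle. The exclusion of a bracket-type (determinant) invariant in the Greek block should not be made to rest on any degree count: nothing in the proposition forces $m<n-m$ (only $n>m$ is assumed), so a product of $n-m$ forms $\omega_{i\alpha}$ with antisymmetrized Greek indices can perfectly well occur in total degree $\le m$, and your inequality ``impossible in degree $m<n-m$'' is not available. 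The clean reason, and the one the paper uses, is that $\barbar\calA^+_{m,n}$ is by definition invariant under the \emph{full} group $O(n-m)$ (no factor $\det h$ appears in its defining equivariance), so one quotes the First Fundamental Theorem for $O(n-m)$ itself, whose invariants in several vector variables are generated by the scalar products alone; equivalently, the bracket transforms by $\det h$ and is therefore excluded outright. Your closing appeal to ``the determinant character of $O(n-m)$ cannot be matched'' is this argument, but the surrounding degree/parity discussion is a red herring and should be deleted.

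Second, the parenthetical claim $\bar\Omega_{ij}=\bar\Omega_{ji}$ is false: since the $\omega_{i\alpha}$ have degree $1$, $\bar\Omega_{ij}=\omega_{i\alpha}\omega_{j\alpha}=-\bar\Omega_{ji}$, so $\bar\Omega$ is antisymmetric in $(i,j)$, exactly like $\Omega_{ij}$ and consistent with the Gauss equation \eqref{eq:gauss}. Accordingly, in the module identification each $\bar\Omega$-slot contributes an $\fss\foo(m)$ factor (as in the paper's proof), not a symmetric one; if that slot really were symmetric, the determinant functional produced by the First Fundamental Theorem for $SO(m)$ would restrict to zero there and the component would vanish rather than be spanned by $\phi_{kpl}$. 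The symmetry you actually need is under permuting the $l$ factors $\bar\Omega_{ij}$ among themselves, which holds because they are of even degree and hence commute. With these two corrections your argument coincides with the paper's proof.
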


\begin{proof} Put $\calN \subset \barbar \calA_{m,n}$ for the subalgebra generated by the $\omega_{i\alpha}$. Clearly 
$\barbar \calA_{m,n} \simeq \bar \calA_m \otimes \calN$, under the convention that  $a\otimes b \;\cdot\; c\otimes d= (-1)^{(\deg b)(\deg c)} ac\otimes bd$. . The groups $SO(m)$ and $O(n-m)$ act on the two factors separately.
Thus any   $\omega\in \barbar\calA_{m,n}^+$  may be expressed in the form
$\omega= \sum \mu\otimes \nu$, where the $\nu\in \calN$ and are $O(n-m)$-invariant.  Abbreviating $W:= \RR^{n-m}$, with coordinates indexed by $\alpha= m+1,\dots,n$, the space 
of all such $\nu$ is isomorphic to the space $S$ of $O(n-m)$-invariant elements of 
$\Lambda^*(W^{m+1})^*$, where the $W$ factors are indexed by $i=0,\dots,m$. 
 The First Fundamental Theorem implies that $S$ is generated by the $\bar \Omega_{ij}= \omega_{i\alpha} \omega_{j\alpha}$.
 
 Thus, as in the proof of Proposition \ref{prop_inv_elements1}, the degree $m$ subspace of $\barbar \calA_{m,n}^+/(\theta_0)$ is isomorphic as an $O(m)$-module to the direct sum over $k,p,l$ of submodules of the determinant submodules of $\left(\fss\foo(m)^{\otimes p}\otimes \fss\foo(m)^{\otimes l}\otimes V^{\otimes k-2p-2l}\otimes V^{\otimes m-k }\right)^*$, where the additional $\fss\foo(m)$ factors correspond to the $\bar \Omega_{ij}$.  The proof now concludes in similar fashion to that of Proposition \ref{prop_inv_elements1}, where the invariant element that emerges now corresponds to $\phi_{kpl}$.
\end{proof}

These algebras may be realized on an  isometric immersion $e:M^{m+1}\hookrightarrow N^{n+1}$. 
 Put $F(M,N) \subset \restrict {FN}M$ for the subbundle of adapted frames $b= (b_0,\dots,b_n)$, i.e. frames such that  $b_0,\dots,b_m\in T_{\hat \pi(b)}M$. This is a principal $O(m+1) \times O(n-m)$ bundle over $M$. 
 The realization map $\rho:\calA_n \to \Omega^*(FN)$ restricts to give an $O(m+1) \times O(n-m)$-equivariant map $\rho_{rel}:\calA_{m,n} \to \Omega^*(F(M,N))$.
The Gauss equation becomes
\begin{equation}\label{eq:gauss}
\rho_{rel}(\bar \Omega_{ij}) =\Omega_{ij}^M-\Omega_{ij}^N,
\end{equation}
 where these two terms refer to the curvature forms of $M,N$ respectively.

Given a fixed frame $b$, we likewise have a map $\bar \rho_{b,rel}:\bar \calA_{m,n} \to \bigwedge{}^*T_{z(b)}(SM)$ since the second fundamental form is well defined.  The restriction of $z$ makes $F(M,N)$ into a principal bundle over $SM$ with group $O(m) \times O(n-m)$.  By restricting to frames $b$ such that $b_0,\dots b_m$ is a positive frame for $T_{\hat \pi(b)}M$, we obtain maps  $\bar \rho_{rel}:\bar\calA_{m,n}^+\to \Omega^*(SM), \calB_{m,n}^+ \to  \Omega^*(M)$ as above. 
Taking $b_{m+1},\dots,b_n$ also to be positive we obtain a map $\bar \calA_{m,n}^{++}\to \Omega^*(SM)$, again denoted $\bar \rho_{rel}$.

\subsection{Fiber integration in a fiber bundle with a connection}\label{sect:fiber} Let $q:P\to X$ be a smooth fiber bundle with model fiber $F$ and group $G$.
Suppose further that $q$ is equipped with  a { $G$-connection}, with horizontal subpaces  $H_p \subset T_pP$. Thus the derivative  $q_*$ induces  isomorphisms $H_p \to T_{q(p)} X$. For $x \in X$, denote by $F_x \subset P$ the fiber over $x$. Given $p \in F_x$, we define the evaluation map 
$$
\ev_p:\Omega^*(P) \to \bigwedge{}^*T_pF_x \otimes \bigwedge{}^*T_x X
$$
by  identifying $T_pP = H_p \oplus T_pF_x \simeq T_xX \oplus T_pF_x $ via the connection. For given $x\in X$, consider the pullback of  the bundle of exterior algebras $\bigwedge{}^*TP$ over $P$ under the inclusion map $F_x \hookrightarrow P$. This gives rise to a {\it restriction map} 
$$
r_x:\Omega^*(P) \to \Omega^*(F_x, \bigwedge{}^*T_x X) 
$$ 
by taking the value  of $\restrict{r_x(\phi)}p$ at $p \in F_x$ to be $\ev_p(\phi)$.

\begin{lemma}\label{lem:fiber}Let $C$ be a $G$-invariant integral current of dimension $k$ in $F$, and for each $x \in X$ let $C_x$ be the corresponding current in the fiber $F_x$. Then the natural fiber integration operator $p_{C*}: \Omega^*(P) \to \Omega^* (X)$ of degree $-k$ is given by
$$
\restrict{(p_{C*} \phi) }x = \int_{C_x} r_x(\phi) \in \bigwedge{}^*T_x X.
$$
\end{lemma}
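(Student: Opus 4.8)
The plan is to reduce the statement to the standard local description of fiber integration, using the connection to produce, locally over $X$, a product splitting of the bundle $P$ that is compatible with the $G$-invariant current $C$.

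First I would recall the definition of the fiber integration operator $p_{C*}$. Given a local trivialization $P|_U \simeq U \times F$ over an open set $U \subset X$, one has the projection $\mathrm{pr}_U : U \times F \to U$, and for a $k$-current $C$ in $F$ the associated current $U \times C$ in $U \times F$; the operator $p_{C*}$ is then characterized by $\int_{U} (p_{C*}\phi) \wedge \lambda = \int_{U \times C} \phi \wedge \mathrm{pr}_U^* \lambda$ for every compactly supported form $\lambda$ on $U$ of complementary degree, together with a patching argument showing the result is independent of the trivialization when $C$ is carried along consistently. The content of the lemma is that the connection gives a canonical way to do this patching and that the pointwise formula $\restrict{(p_{C*}\phi)}{x} = \int_{C_x} r_x(\phi)$ computes the same thing.

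The key steps, in order, are: (i) fix $x \in X$ and a small ball $U$ around it, and use parallel transport along radial geodesics in $U$ (with respect to the given $G$-connection) to build a local trivialization $\Theta : U \times F_x \to P|_U$ whose associated horizontal subspaces at points of $F_x$ over $x$ agree with $H_p$ — i.e. a trivialization that is ``infinitesimally horizontal'' along the central fiber, exactly as in the infinitesimally parallel frame construction of Section \ref{sect:infinitesimal parallel}; (ii) observe that because $C$ is $G$-invariant, the family $\{C_y\}_{y \in U}$ of currents in the fibers corresponds under $\Theta$ to the constant family $U \times C_x$, so $p_{C*}$ is computed by the elementary slicing formula above in this trivialization; (iii) differentiate that formula at $x$: the value $\restrict{(p_{C*}\phi)}{x} \in \bigwedge^* T_xX$ is obtained by restricting $\phi$ to $\Theta(U \times F_x)$, contracting with the horizontal lifts of tangent vectors of $X$, and integrating over $C_x$ — but by the defining property of $\Theta$ this contraction-and-restriction is exactly $\ev_p$ composed with $\bigwedge^* q_*^{-1}$ on the horizontal part, which is precisely the map $r_x$ of the statement; (iv) conclude $\restrict{(p_{C*}\phi)}{x} = \int_{C_x} r_x(\phi)$, and note that since the left side is manifestly a well-defined global form on $X$ (fiber integration against the globally-carried current $C$), the pointwise formula holds globally.

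The main obstacle is step (i): producing the local trivialization $\Theta$ that is adapted to the connection along the whole central fiber $F_x$ simultaneously, and verifying that $G$-invariance of $C$ really does make the transported family of currents constant in this trivialization. This is the same subtlety as in the construction of the canonical connection on $z : FM \to SM$ in Section \ref{sect:infinitesimal parallel}: one needs the radial parallel transport to be smooth in the fiber variable as well, which requires $U$ to be a normal ball and uses smoothness of the connection. Once $\Theta$ is in hand, steps (ii)--(iv) are a routine unwinding of definitions — the identification of the contraction map with $r_x$ is immediate from how $\ev_p$ and $r_x$ were defined via the connection splitting $T_pP = H_p \oplus T_pF_x$, and the $G$-invariance is used only to know that the measure-theoretic ``slice'' $C_y$ is the push-forward of $C_x$ under the trivialization, so that no extra Jacobian factors appear.
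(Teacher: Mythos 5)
Your proposal is correct and follows essentially the same route as the paper: the paper's proof also uses parallel translation to produce a local trivialization $\tau: U \times F \to P$ whose derivative carries $T_xX \oplus \{0\}$ onto the horizontal subspaces at every point of the central fiber, observes that then $\restrict{\tau^*\phi}{x,f} = \ev_{\tau(x,f)}\phi$, and concludes from the definition of fiber integration (with the $G$-invariance of $C$ playing exactly the role you assign it, namely making the fiberwise currents a constant family in the trivialization).
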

\begin{proof}
Given $x \in X$, by parallel translation we may find a neighborhood $U\subset X$ of $x$ and a local trivialization $\tau: U \times F \to P$ such that
$D\tau_{x,f}( T_x X \oplus \{0\})= H_{\tau(x,f)}$ for all $f \in F$. Thus, given $\phi \in \Omega^*(P)$ we have $\restrict{\tau^*(\phi) }{x,f} = \ev_{\tau(x,f) }\phi$. The Lemma now follows directly from the definition of  fiber integration.
%
%
%
\end{proof}

\subsection{Generating functions}
We will make extensive use of the Maclaurin expansions 
\begin{equation}\label{eq:maclaurin1}
 (1-x)^{-(k+1)}=\sum_{j=0}^\infty \binom {k + j} j x^j .
 \end{equation}
The special cases $k=\pm\frac 1 2$ admit the alternate expressions
\begin{align}\label{eq:maclaurin1/2}
 (1-x)^{-\frac 1 2 }&=\sum_{j=0}^\infty \binom {2j} j \left(\frac x 4\right)^j ,\\
\label{eq:maclaurin3/2}  (1-x)^{-\frac 3 2 }&=\sum_{j=0}^\infty (2j+1)\binom {2j} j \left(\frac x 4\right)^j .
\end{align}

\section{Riemannian curvature measures} \label{sect:riem}

In this section we  introduce the space $\calR$ of {\it Riemannian curvature measures}, given as the vector space of all possibly infinite linear combinations of certain objects $\{C_{kp}\}_{0\le k,p\in \ZZ,  2p\le k}$. We will describe this situation briefly by saying that $\calR $ is the {\it $\omega$-span} of the $C_{kp}$, and refer similarly to $\omega$-linear maps, etc. 
Formally, an element of  $\calR$ is a universal  object $\Psi$ that associates to every smooth Riemannian manifold $M$ a curvature measure $\Psi^M \in \calC(M)$-- the {\it realization} of $\Psi$ on $M$-- that is expressible in a universal way in terms of the Cartan apparatus. Taking $M$ to be a euclidean space, the Federer curvature measures $\Phi_k $ are included. If $k>\dim M $ then $C_{kp}^M$ is the zero curvature measure. We normalize the $C_{kp}$ so as to minimize the role played by the dimension of $M$. In particular, they are 
 invariant under totally geodesic isometric embeddings (cf. Corollary \ref{cor:totally geodesic} below).
The  {\it Lipschitz-Killing curvature measures} $\Lambda_k$, defined in Section \ref{sect:LK curve} below, are  distinguished elements of $\calR$. We denote their $\omega$-span by $\widetilde{\calLK}$.



\subsection{Definition and basic properties} 

\begin{definition}\label{def R} For $2p\le k$, define $C_{kp}$ to be the object that assigns to any smooth Riemannian manifold $M$  the 
curvature measure
$$\iota_M(C_{kp}):=C^M_{kp}:=\begin{cases} \, \frac{\omega_k}{\pi^k{ (m+1-k)!} \omega_{m+1-k}}[\bar \rho_M( \phi_{kp}), 0] &  \text{if } k< \dim M = m+1 \\
 \frac{\omega_k}{\pi^k}\, [0,\bar \rho_M( \psi_p)]&  \text{if } k= \dim M \\
 0 &  \text{if } k>\dim M.
 \end{cases}
 $$
 Here $\omega_k$ denotes the volume of the unit ball in $\RR^k$. The globalization of $C_{21}^M$ coincides, up to scale, 
 with the valuation first described in Theorem 2 of \cite{bernig voide}.

The $\omega$-span of the  $ C_{kp} $  is denoted by $\calR$.  
The space $\calR$ is $\omega$-graded, by setting $\deg(C_{kp})=k$. Given any smooth Riemannian $M$, the realization map $\iota_M:\calR \to \calC(M)$ is an $\omega$-graded map of degree zero.
We denote the image of the realization map by
$\calR(M):= \iota_M(\calR)\subset \calC(M)$.
\end{definition}


 The normalizing constants in the definition of the $C_{kp}$ are chosen so that
\begin{enumerate}
 \item the $C_{kp}$ are invariant under totally geodesic embeddings (Corollary \ref{cor:totally geodesic});
 \item $C_{kp}^M= \lambda^p C_{k0}^M$ if $M$ is a sphere of curvature $\lambda$;
 \item if $V$ is a euclidean space then $C_{k,0}^V= \frac{k!\omega_k}{\pi^k}\Phi_k$,
where $\Phi_k$ is  the $k$th Federer curvature measure \cite{cm}.
\end{enumerate}
The constants reflect  also the behavior of Alesker multiplication, as will be evident. From this perspective it is convenient to introduce the $\omega$-linear isomorphism 
\begin{equation}\label{eq:C code}
\alpha: \calR \to \RR[[\xi,\eta]], \qquad
  C_{kp } \mapsto \xi^{k-2p}\eta^p.
\end{equation}

\begin{proposition}\label{prop:C is angular} For every $k,p,M$, the curvature measure $C^M_{kp} \in \calC(M)$ is  angular.
\end{proposition}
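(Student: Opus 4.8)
The plan is to reduce the claim to a pointwise statement about translation-invariant curvature measures on Euclidean space, and then to verify angularity there by an explicit computation with the form $\phi_{kp}$. Recall from Section~\ref{sect:angular} that, via the canonical isomorphism $\calC(M) \leftrightarrow \Gamma(\Curv(TM))$, the curvature measure $C^M_{kp}$ is angular precisely when, for each $x \in M$, the associated translation-invariant curvature measure in $T_xM$ is angular. The value of this translation-invariant measure is obtained by ``freezing'' the Cartan data at $x$: in the realization $\bar\rho_M(\phi_{kp})$ the curvature forms $\Omega_{ij}$ are evaluated at $x$, while the forms $\theta_i, \omega_{i0}$ retain their universal meaning on $S(T_xM) = S^m$. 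In other words, the local model of $C^M_{kp}$ is a curvature measure on the flat space $T_xM \cong \RR^{m+1}$ whose defining $m$-form on $S\RR^{m+1}$ is $\bar\rho(\phi_{kp})$ with $\Omega_{ij}$ replaced by an arbitrary (constant) antisymmetric array of $2$-forms — equivalently, a fixed symmetric ``algebraic curvature operator.'' So it suffices to show: \emph{for every fixed value of the curvature variables, the translation-invariant curvature measure on $\RR^{m+1}$ represented by $\bar\rho(\phi_{kp})$ is angular.}

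Next I would use the definition \eqref{eq:def angular} of angularity: I must exhibit a function $c$ on the Grassmannian $\Grass_k(\RR^{m+1})$ such that, evaluated on any convex polytope $P$, the measure assigns to each $k$-face $F$ the mass $c(\vec F)\,\angle(F,P)\,\restrict{\calH_k}{F}$. The standard way to see this for a measure given by integration of a form $\psi$ over the normal cycle is to compute $\nc(P) \with \psi$ face by face: over the relative interior of a $k$-face $F$, the normal cycle $\nc(P)$ is the product of $F$ with the exterior normal cone $N(F,P) \subset S^m$, and the pushforward to $\RR^{m+1}$ picks out the coefficient of the ``horizontal'' volume form along $F$ in $\psi$, integrated over $N(F,P)$. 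Angularity amounts to the statement that this integral factors as (a function of $\vec F$ only) times $\angle(F,P) = \calH_{m-k}(N(F,P))/\omega_{m-k}$ — that is, the fiberwise integrand, restricted to the relevant piece, is a \emph{constant multiple of the spherical volume form} on the normal sphere $S^{m-k-1}$, with a constant depending only on the direction of $F$. I would extract this from the explicit shape of $\phi_{kp}$ in \eqref{def_hatPhi}: choosing an adapted frame with $b_1,\dots,b_k$ spanning $\vec F$ and $b_{k+1},\dots,b_m,b_0$ spanning the normal directions, the $\theta$-factors $\theta_{\pi_{2p+1}}\cdots\theta_{\pi_k}$ supply exactly the horizontal volume form along $F$ (since $\theta_i$ restricted to a flat submanifold is the $i$-th coordinate form), the $\Omega$-factors contribute the (constant, $\vec F$-dependent) curvature coefficient, and the remaining $\omega_{\pi_{k+1},0}\cdots\omega_{\pi_m,0}$ restrict to the normal sphere $S^{m-k-1} = N(F,P)$ as a multiple of its volume form — this last point being the computation already encoded in Lemma~\ref{lem:rho g}, where $\bar\rho_b(\omega_{0i})$ is identified with the interior product of a horizontal lift with $d\alpha$.

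The main obstacle, and the only place real work is needed, is the bookkeeping in the previous step: one must check that after restricting $\bar\rho(\phi_{kp})$ to the piece of $\nc(P)$ over $F$ and separating horizontal ($\theta$ along $F$) from fiber ($\omega_{\cdot,0}$ along the normal directions) components, no ``mixed'' terms survive and the fiber part is genuinely proportional to the round volume form on $S^{m-k-1}$ with the proportionality constant independent of which polytope $P$ gave rise to $F$. This is a finite-dimensional linear-algebra identity about the antisymmetrization \eqref{def_hatPhi}, essentially the same mechanism that makes the classical Weyl/Federer curvature forms angular (the degree-$k$ case here is the direct analogue of the fact that the Federer curvature measure $\Phi_k$ is angular — indeed by normalization (3) above, $C^V_{k0}$ is a multiple of $\Phi_k$ for Euclidean $V$, and the $\Omega$-factors only multiply in a constant depending on $\vec F$). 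I expect this to go through cleanly once the frame is chosen adapted to $F$, and would present it as a short lemma isolating the restriction of $\phi_{kp}$ to such a frame, followed by the identification of the two tensor factors.
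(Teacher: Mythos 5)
Your proposal is correct and follows essentially the same route as the paper's proof: reduce via the transfer map to the frozen translation-invariant model at a point, choose a moving frame adapted to a $k$-face $F$ of a polytope, and read off from \eqref{def_hatPhi} that the tangential $\Omega$- and $\theta$-factors produce a coefficient depending only on $\vec F$ while the $\omega_{\cdot,0}$-factors integrate over the normal cone to give the exterior angle. The "bookkeeping" you flag is exactly the one-line observation the paper's proof rests on: in such an adapted frame, $\theta_i$ vanishes on the piece of the normal cycle over $F$ when $i$ is a normal index and $\omega_{i0}$ vanishes when $i$ is tangential, so no mixed terms survive and the fiber factor is the round volume form on the normal cone.
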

\begin{proof} Given a point $x \in M$, let $\Psi\in \Curv(T_xM)$ denote the value at $x$ of the section of $\Gamma(\Curv(TM))$ that corresponds to $C^M_{kp}$ under the transfer map described in Section 2.2.2 of \cite{bfs}. Let $P \subset T_xM$ be a convex polytope and $F\subset P$ a face of dimension $k$.
Choosing  moving frames so as to include some fixed orthonormal frame $e_1,e_2,\dots e_k$ for $\vec F$, the expression \eqref{def_hatPhi} implies that the coefficient $c_\Psi(\vec F)$ from \eqref{eq:def angular} is equal to a constant multiple of $\sum_\pi\int_{B_F}  \sgn \pi \,\Omega_{\pi_1\pi_2}\dots\Omega_{\pi_{2p-1}\pi_{2p}} \theta_{\pi_{2p+1}}\dots \theta_{ \pi_k}$, where $\pi$ ranges over all permutations of $1,\dots,k$ and $B_F$ is the unit ball in $\vec F$.
\end{proof}


\subsection{Some facts from spherical integral geometry}\label{sect:some facts}
Our main  calculations all follow ultimately from a few simple yet fundamental observations about the integral geometry of spheres. This discussion also represents the simplest instance of the apparatus just described. We denote  the standard $(n+1)$-dimensional sphere of curvature $\lambda$ (or radius $\lambda^{-\frac 1 2}$) by $S^{n+1}_\lambda$, $\lambda > 0$. 


\begin{proposition}[\cite{fu lag}, 0.4.3]\label{prop:R space forms}Modulo the canonical 1-form and its differential, the space of all differential forms on the sphere bundle of $S^{n+1}_\lambda$ that are invariant under the usual action of $SO(n+2) $ is spanned by the $\bar \rho(\phi_{k,0}), k =0,\dots, n$.
\end{proposition}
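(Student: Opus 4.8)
The plan is to exploit that $SM$ is a homogeneous space. Write $G:=SO(n+2)$; then $S^{n+1}_\lambda=G/SO(n+1)$ and its sphere bundle is $SM=G/SO(n)$, where $SO(n)$ is the stabilizer of a pointed unit tangent vector, acting by the standard representation on the $n$-plane orthogonal to the base point and the vector --- in the notation of Section~\ref{sect:cartan} it acts on $\bar\calA_n$ through the operators $L_h$, $h\in SO(n)\subset O(n)$. First I would spell out, using the coframe/connection/curvature dictionary of Section~\ref{sect:cartan}, that under the canonical identification of the bundle of positive orthonormal frames of $S^{n+1}_\lambda$ with $G$ the map $z\colon FM\to SM$ becomes the principal $SO(n)$-bundle $G\to G/SO(n)$, that $\rho$ carries $\{\theta_i\}_{0\le i\le n}\cup\{\omega_{ij}\}_{0\le i<j\le n}$ onto a basis of the left-invariant $1$-forms $\mathfrak{so}(n+2)^*$, and that constant curvature makes $\rho(\Omega_{ij})$ equal to $\lambda\,\rho(\theta_i)\wedge\rho(\theta_j)$. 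It follows that a $G$-invariant form on $SM$ pulls back under $z$ to a left-invariant form on $G$ which is basic for $z$ (horizontal and right-$SO(n)$-invariant), hence equals $\bar\rho(\omega)$ for some $\omega$ in the Grassmann algebra
\[
\textstyle\bigwedge^\bullet\bigl(\RR\,\bar\rho(\theta_0)\ \oplus\ V_\theta\ \oplus\ V_\omega\bigr),\qquad V_\theta:=\linspan\{\bar\rho(\theta_i)\}_{i=1}^n,\quad V_\omega:=\linspan\{\bar\rho(\omega_{0i})\}_{i=1}^n,
\]
with $\omega$ invariant under $L_h$ for all $h\in SO(n)$, where $SO(n)$ acts by the standard representation on $V_\theta$ and on $V_\omega$ and trivially on $\bar\rho(\theta_0)$. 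So the statement reduces to describing the $SO(n)$-invariants of this algebra modulo the ideal $\mathcal I$ generated by $\alpha=\bar\rho(\theta_0)$ and $d\alpha=-\sum_i\bar\rho(\omega_{0i})\wedge\bar\rho(\theta_i)$.

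I would then split the $SO(n)$-invariants into the $O(n)$-invariant summand and the summand on which $O(n)$ acts through the determinant. For the first, a one-line application of the First Fundamental Theorem for $O(n)$ shows that after the antisymmetrization within $V_\theta$ and within $V_\omega$ the only surviving contraction is the mixed pairing $\sigma:=\sum_i\bar\rho(\omega_{0i})\wedge\bar\rho(\theta_i)=-d\alpha$; so this summand is generated by $\alpha$ and $\sigma$ and lies entirely in $\mathcal I$ in positive degree. For the determinant summand, Proposition~\ref{prop_inv_elements1} already identifies its degree-$m$ part modulo $\theta_0$ as the span of the $\phi_{kp}$; on a space form the identity $\bar\rho(\Omega_{ij})=\lambda\,\bar\rho(\theta_i)\wedge\bar\rho(\theta_j)$ collapses $\bar\rho(\phi_{kp})$ to $\lambda^p\bar\rho(\phi_{k,0})$, while its $\theta_0$-divisible part lies in $\mathcal I$. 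Hence modulo $\mathcal I$ every $G$-invariant form on $SM$ of positive degree lies in $\linspan\{\bar\rho(\phi_{k,0}):0\le k\le n\}$.

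The reverse inclusion is immediate, since each $\bar\rho(\phi_{k,0})$ is an isometry-invariant form. It remains to see that the $n+1$ forms $\bar\rho(\phi_{k,0})$ are linearly independent modulo $\mathcal I$. Each $\bar\rho(\phi_{k,0})$ is a sum of monomials with $k$ factors from $V_\theta$, $n-k$ factors from $V_\omega$, and no $\theta_0$-factor, so for distinct $k$ they lie in distinct graded pieces and are linearly independent in $\Omega^n(SM)$; moreover in each such monomial the $\theta$- and $\omega$-index sets are disjoint, so contraction against $\sigma$ annihilates it, that is, $\bar\rho(\phi_{k,0})$ is $\sigma$-primitive. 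Since a combination of the $\bar\rho(\phi_{k,0})$ has no $\theta_0$-factor, were it to lie in $\mathcal I$ it would lie in $\sigma\wedge\bigwedge^{n-2}(V_\theta\oplus V_\omega)$, and by the Lefschetz decomposition of the symplectic exterior algebra $(V_\theta\oplus V_\omega,\sigma)$ this space contains no nonzero $\sigma$-primitive form; hence the combination vanishes. So the $\bar\rho(\phi_{k,0})$, $k=0,\dots,n$, actually form a basis of the space in question. (In degree $0$ there are also the constants; these are irrelevant here, since only $m$-forms enter the definition of $\calR$.)

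Most of the substance sits in the first step --- setting up the homogeneous-space picture and checking that on $S^{n+1}_\lambda$ the realized horizontal algebra is exactly the Grassmann algebra displayed above, with the contact ideal corresponding to $(\alpha,d\alpha)$ --- together with the two invariant-theoretic inputs: of those the determinant part is precisely Proposition~\ref{prop_inv_elements1}, and the $O(n)$-invariant part is a routine First Fundamental Theorem computation of the same flavor as in the proof of that proposition. I therefore expect no genuinely hard obstacle, only careful bookkeeping around the coframe dictionary and the quotient by the contact ideal.
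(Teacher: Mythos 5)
Your argument is sound, but note that the paper itself offers no proof to compare against: Proposition \ref{prop:R space forms} is quoted from \cite{fu lag} (pp.\ 184--185), and the text here consists only of that citation. What you supply is the standard argument, and it is very much in the spirit of how the paper handles the analogous formal statements: identifying the oriented frame bundle of $S^{n+1}_\lambda$ with $SO(n+2)$, so that invariant forms on $SM=SO(n+2)/SO(n)$ become $SO(n)$-invariants of $\bigwedge^\bullet\bigl(\RR\,\theta_0\oplus V_\theta\oplus V_\omega\bigr)$ with constant curvature collapsing $\Omega_{ij}$ to $\lambda\,\theta_i\wedge\theta_j$, and then splitting the $SO(n)$-invariants into the $O(n)$-invariant part (generated by $\theta_0$ and $\sigma=-d\alpha$, by the skew form of the First Fundamental Theorem) and the determinant part (handled by the same invariant-theoretic computation as Proposition \ref{prop_inv_elements1}). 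This is exactly the flavor of the proofs of Propositions \ref{prop_inv_elements1} and \ref{prop_inv_elements2}, so your proposal is a legitimate stand-alone substitute for the external reference. The linear-independence coda (each $\bar\rho(\phi_{k,0})$ is $\sigma$-primitive, and the image of $\sigma\wedge\cdot$ meets the primitives trivially by the Lefschetz decomposition) goes beyond what the proposition asserts, but it is correct and is consistent with how the paper later extracts a basis (there via evaluation on great subspheres).

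One loose end: your final conclusion is stated for invariant forms of every positive degree, but the only ingredient you invoke for the determinant summand is Proposition \ref{prop_inv_elements1}, which addresses formal degree $m=n$ only. To cover the other degrees you should add the (equally one-line) observation that a determinant-type invariant must contain exactly one bracket factor over all $n$ indices, hence occurs only in degrees $n+2j$ (possibly times $\theta_0$) in the form $\theta_0^{\epsilon}\,\sigma^{j}\wedge(\text{degree-}n\text{ bracket element})$, which lies in $\mathcal I$ unless $\epsilon=j=0$. Since only degree-$n$ forms on $SM$ enter the definition of the curvature measures $C^{S^{n+1}_\lambda}_{k,0}$, this is bookkeeping rather than a substantive gap, but as written the assertion outruns the cited tool.
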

\begin{proof} Cf.  pp. 184--5 of \cite{fu lag}.
\end{proof}
Up to scale, these forms correspond to the elements $C_{k,0}^{S^{n+1}} \in \calR(S^{n+1}) \subset \calC(S^{n+1})$, $k\le n$, to which we append the rescaled volume curvature measure $C_{n+1,0}^{S^{n+1}}$.
Since
$\Omega_{ij}=\lambda\, \theta_i\wedge \theta_j $ {for} $ i,j=0,\ldots,n$,
 we have
\begin{equation}\label{eq_Ckp_sphere} 
C^{S^{n+1}_\lambda}_{kp}=\lambda^p C^{S^{n+1}_\lambda}_{k,0}\qquad 
\end{equation}
so that
$$
\calR({S^{n+1}_\lambda}) =\linspan \{C_{k,0}^{S^{n+1}_\lambda}: 0\le k \le n+1\}.
$$
The associated valuations
\begin{equation}\label{eq_def_tau}
\tau_k:=\glob C_{k,0}^{S^{n+1}_\lambda}\in \calV(S^{n+1}_\lambda)
\end{equation} 
constitute a natural basis for the space $\calV^{SO(n+2)}$ of all invariant valuations. Using the identity
$
\pi{(k+1)!}{\omega_k\omega_{k+1}} = {(2\pi)^{k+1}}
$
 we obtain 
 \begin{align}
 \label{eq:cm relation} C_{k0}^{S^{n+1}_\lambda}(S^j_\lambda,\cdot) &= \delta_j^k\frac{k!\omega_k}{\pi^k} \restrict{\calH^j}{S^j_\lambda}\\
\label{eq_Ckp_on_spheres} \tau_k(S^j_\lambda)&=
					 \delta_j^k\ 2 \left(\frac{2}{\sqrt{\lambda}}\right)^k                                      
\end{align}
where $\calH^j$ denotes $j$-dimensional Hausdorff measure.
 Unfortunately this basis is not multiplicative, i.e. in general $\tau_j \cdot \tau_k \ne \tau_{j+k}$, even up to scale. It is helpful to define two other natural bases that {\it are} multiplicative. For the first, 
 let $dg$ denote the Haar probability measure on $SO(n+2)$ and define the valuation
$$
\phi:= \frac{2}{\sqrt\lambda} \int_{SO(n+2)} \chi(\,\cdot\, \cap g S^{n}_\lambda)\, dg
$$
Then by Theorem \ref{thm:product} (2),
\begin{equation}
 \phi^k   = \left(\frac{2}{\sqrt\lambda}\right)^k  \int_{SO(n+2)} \chi(\,\cdot\, \cap g S^{n-k+1}_\lambda)\, dg
\end{equation}
and we again arrive at a basis $\phi^0,\dots,\phi^{n+1}$.

 For the second, 
let
$t= t_{\RR^{n+2}}:=\glob(C_{10}^{\RR^{n+2}})$. By Theorem 2.3.18 of part 2 of \cite{crm}, 
 the Alesker powers of $t$ satisfy
\begin{equation}\label{eq:powers of t}
\frac{\pi^kt^k} {k!} = \omega_k{ \mu_k},
\end{equation}
 where $\mu_k \in \calV(\RR^{n+2})$ is the $k$th intrinsic volume. In particular, $t = \frac 2 \pi \mu_1$.
 We abuse notation and denote again by $t$ the restriction of this valuation to the sphere. Thus $t^0,\dots, t^{n+1}$ constitute another basis for $\calV^{SO(n+2)}$.

We describe the relations among these three bases. Clearly
\begin{equation}
\phi^k (S^j_\lambda)= \begin{cases}
2  \left(\frac{2}{\sqrt{\lambda}}\right)^k  &   j - k \ge 0 \text{  and even}     \\
0 & \text{otherwise}
\end{cases}                          
\end{equation}
 which with \eqref{eq_Ckp_on_spheres} implies that
$$
\phi^k = \sum_{j=0}^\infty \left(\frac \lambda 4\right)^j \tau_{k+2j}.
$$
Thus the generating function  for the coefficients $c_i$ of  $\tau_i$ in the expansion of  $\phi^k$ 
is $ x^k \left(1-\frac {\lambda x^2}{4}\right)^{-1}$,
i.e.
\begin{equation}\label{eq:tau gf}
 \phi^k = \sum_j [x^j]\left(x^k \left(1-\frac {\lambda x^2}{4}\right)^{-1}\right) \tau_j
\end{equation}
where we use the standard generating functions notation
$$
[x^j]\left(\sum_i c_i x^i\right) := c_j.
$$

\begin{lemma}\label{lem_tphi}
 \begin{align}
\label{eq:t0} t^{k} &=  {\phi^k}\left({{1- \frac{\lambda\phi^2}{4}}}\right)^{-\frac k 2} \\ 
\label{eq:t01}&=  \sum_{j=0}^\infty  \binom {{\frac{ k} 2}+j -1}  j  \left(\frac{\lambda}{4}\right)^j \phi^{k+ 2j} \\
\label{eq:t02} &= \sum_{j=0}^\infty  \binom {{\frac{k} 2}+j} j  \left(\frac{\lambda}{4}\right)^j \tau_{k+ 2j}
 \end{align}
\end{lemma}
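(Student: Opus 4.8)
The plan is to establish the $\tau$-expansion \eqref{eq:t02} by evaluating both sides on the test spheres $S^j_\lambda$, $0\le j\le n+1$, and then to deduce \eqref{eq:t01} and \eqref{eq:t0} from it by purely formal generating-function manipulations. Since the $\tau_k$ form a basis of $\calV^{SO(n+2)}$ and \eqref{eq_Ckp_on_spheres} reads $\tau_k(S^j_\lambda)=\delta^k_j\cdot 2(2/\sqrt\lambda)^k$, any $\mu=\sum_k c_k\tau_k$ is recovered from $c_j=\tfrac12(\sqrt\lambda/2)^j\,\mu(S^j_\lambda)$, so everything reduces to computing $t^k(S^j_\lambda)$. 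By the reproductive property of the Lipschitz--Killing algebra, the restriction of $t^k$ to $S^j_\lambda$ is the image of $t^k$ under the canonical map of the Lipschitz--Killing algebra into $\calV(S^j_\lambda)$, which may equally be computed inside $\RR^{j+1}$; there \eqref{eq:powers of t} gives $t^k=\tfrac{k!\omega_k}{\pi^k}\mu_k$, whence $t^k(S^j_\lambda)=\tfrac{k!\omega_k}{\pi^k}\mu_k(S^j_\rho)$, where $\rho:=\lambda^{-1/2}$ and $\mu_k(S^j_\rho)$ is the $k$th intrinsic volume of the round $j$-sphere of radius $\rho$ in $\RR^{j+1}$. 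The $r$-tube about $S^j_\rho$ (for $r<\rho$) is the annulus of volume $\omega_{j+1}\big((\rho+r)^{j+1}-(\rho-r)^{j+1}\big)=2\omega_{j+1}\sum_{l\ \mathrm{odd}}\binom{j+1}{l}\rho^{j+1-l}r^l$, so Weyl's tube formula yields $\mu_k(S^j_\rho)=\tfrac{2\omega_{j+1}}{\omega_{j+1-k}}\binom{j+1}{k}\rho^k$ when $0\le k\le j$ with $j-k$ even, and $\mu_k(S^j_\rho)=0$ otherwise.

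On $S^j_\lambda$ only the term $\tau_j$ of the right-hand side of \eqref{eq:t02} survives, nonvanishing precisely when $j=k+2m$ with $m\ge 0$ (for $j<k$, or $j-k$ odd, both sides vanish on $S^j_\lambda$). Matching the coefficients of $\tau_{k+2m}$, the identity \eqref{eq:t02} reduces to
\[
\frac{k!\,\omega_k\,\omega_{j+1}}{\pi^k\,\omega_{j+1-k}}\binom{j+1}{k}=2^k\binom{j/2}{(j-k)/2}\qquad(0\le k\le j,\ j-k\ \text{even}).
\]
Substituting $\omega_m=\pi^{m/2}/\Gamma(\tfrac m2+1)$ makes the powers of $\pi$ and the factors $k!$ cancel, and applying the Legendre duplication formula $\Gamma(z)\Gamma(z+\tfrac12)=2^{1-2z}\sqrt\pi\,\Gamma(2z)$ to the pairs $\Gamma(\tfrac j2+1)\Gamma(\tfrac{j+3}2)$ and $\Gamma(\tfrac{j-k}2+1)\Gamma(\tfrac{j+3-k}2)$ reduces both sides to $(j+1)!/(j+1-k)!$. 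This proves \eqref{eq:t02}.

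Finally, \eqref{eq:t01} follows by inverting the relation $\phi^k=\sum_{i\ge 0}(\lambda/4)^i\tau_{k+2i}$ of \eqref{eq:tau gf} into $\tau_k=\phi^k-\tfrac\lambda4\phi^{k+2}$ and substituting into \eqref{eq:t02}: the coefficient of $\phi^{k+2j}$ becomes $\big(\binom{k/2+j}{j}-\binom{k/2+j-1}{j-1}\big)(\lambda/4)^j=\binom{k/2+j-1}{j}(\lambda/4)^j$ by Pascal's rule. Because $\phi^{n+2}=0$ (its degree exceeds $\dim S^{n+1}_\lambda$), power series in $\phi$ are genuine polynomials on $S^{n+1}_\lambda$, and \eqref{eq:t01} is summed by \eqref{eq:maclaurin1}: $t^k=\phi^k\sum_{j\ge 0}\binom{k/2+j-1}{j}(\lambda\phi^2/4)^j=\phi^k\big(1-\tfrac{\lambda\phi^2}{4}\big)^{-k/2}$, which is \eqref{eq:t0}. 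The one genuinely delicate point is the very first step, the identification of $t^k(S^j_\lambda)$: it rests on using reproductivity to transport the computation from $S^{n+1}_\lambda$ into $\RR^{j+1}$, where \eqref{eq:powers of t} and the classical tube formula apply. After that the only real computation is the displayed $\Gamma$-function identity, which is routine given the duplication formula.
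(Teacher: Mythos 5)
Your proposal is correct, and it is essentially the paper's own argument: the paper proves the lemma by the same template method, evaluating on the great spheres $S^j_\lambda$ and using \eqref{eq:powers of t} together with the known intrinsic volumes of round spheres, then passing between the $\phi$- and $\tau$-expansions via \eqref{eq:tau gf} and \eqref{eq:maclaurin1}. The only difference is organizational: you establish \eqref{eq:t02} directly for all $k$ (verifying the resulting $\Gamma$-function identity by Legendre duplication) and then deduce \eqref{eq:t01} and \eqref{eq:t0}, whereas the paper starts from \eqref{eq:t0}, using the especially convenient case $k=2$ and the uniqueness of the positive square root in $\RR[[\phi]]$ to avoid that computation.
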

\begin{proof} 
The relation \eqref{eq:t0} is Proposition 2.4.20 in \cite{crm}. In fact it is easy to prove directly by the template method: simply evaluate both sides on the great spheres $S^j_\lambda$, using \eqref{eq:powers of t} and the known values of the intrinsic volumes of spheres (e.g. \cite{klain rota}) (the case $k=2$ is especially convenient, from which the other cases follow, since $t$ is the unique square root of $t^2$ in $\RR[[\phi]]$ that assigns positive values to curves). The explicit expansion \eqref{eq:t01} then follows from \eqref{eq:maclaurin1}.

To obtain \eqref{eq:t02}, note that by \eqref{eq:tau gf}, \eqref{eq:t0}, 
\begin{equation}\label{eq:tk in terms of tau}
t^k = \sum_j [x^j]\left( {x^k}\left({{1- \frac{\lambda x^2}{4}}}\right)^{-\frac k 2} \left({{1- \frac{\lambda x^2}{4}}}\right)\inv\right)\tau_j.
\end{equation}
\end{proof}

\begin{corollary}\label{cor:t acts on tau}
$$
t\cdot \tau_k = \sum_{j\ge 0} \binom {2j} j \left(\frac \lambda {16}\right)^j \tau_{k+2j+1}.
$$
\end{corollary}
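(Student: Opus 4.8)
\subsection*{Proof proposal}

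The plan is to carry out the whole computation inside the commutative subalgebra $\RR[\phi]\subset\calV^{SO(n+2)}(S^{n+1}_\lambda)$ generated by $\phi$. The point is that the powers $\phi^k$ were introduced precisely via Theorem~\ref{thm:product}(2) applied iteratively, so that on this subalgebra the Alesker product is just ordinary multiplication of polynomials in $\phi$; moreover $\phi^{n+2}=0$ (by $\phi^{n+2}=\sum_j(\lambda/4)^j\tau_{n+2+2j}$ and the vanishing $\tau_m=0$ for $m>n+1$), so $\RR[\phi]$ is a truncated polynomial ring of dimension $n+2$. Both $t$, by \eqref{eq:t0}, and every $\tau_k$ (as shown below) lie in this subalgebra, so $t\cdot\tau_k$ may be evaluated by a purely formal manipulation, all identities being read with the convention $\tau_m:=0$ for $m>n+1$.

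First I would invert \eqref{eq:tau gf}: from $\phi^k=\sum_{i\ge0}(\lambda/4)^i\tau_{k+2i}$ a one-line telescoping gives
\[
\tau_k=\phi^k-\tfrac{\lambda}{4}\phi^{k+2}=\phi^k\bigl(1-\tfrac{\lambda}{4}\phi^2\bigr).
\]
Next, \eqref{eq:t0} with $k=1$ reads $t=\phi\,(1-\tfrac{\lambda}{4}\phi^2)^{-1/2}$, the right-hand side being the (truncated) binomial series. Multiplying and using the previous display,
\[
t\cdot\tau_k=\phi^{k+1}\bigl(1-\tfrac{\lambda}{4}\phi^2\bigr)^{-1/2}\bigl(1-\tfrac{\lambda}{4}\phi^2\bigr)=\phi^{k+1}\bigl(1-\tfrac{\lambda}{4}\phi^2\bigr)^{1/2}.
\]
Finally I would write $(1-\tfrac{\lambda}{4}\phi^2)^{1/2}=(1-\tfrac{\lambda}{4}\phi^2)\cdot(1-\tfrac{\lambda}{4}\phi^2)^{-1/2}$, expand the last factor by \eqref{eq:maclaurin1/2} as $\sum_{j\ge0}\binom{2j}{j}(\lambda\phi^2/16)^j$, and collect:
\[
t\cdot\tau_k=\sum_{j\ge0}\binom{2j}{j}\Bigl(\frac{\lambda}{16}\Bigr)^j\phi^{k+2j+1}\bigl(1-\tfrac{\lambda}{4}\phi^2\bigr)=\sum_{j\ge0}\binom{2j}{j}\Bigl(\frac{\lambda}{16}\Bigr)^j\tau_{k+2j+1},
\]
again invoking $\tau_m=\phi^m(1-\tfrac{\lambda}{4}\phi^2)$. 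This is the asserted formula.

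There is no genuine obstacle here; the only points deserving a word of care are (i) the observation that $t$ and all the $\tau_k$ lie in the polynomial subalgebra generated by $\phi$, so that every product occurring is ordinary multiplication, and (ii) the harmless bookkeeping that the displayed sums are finite because $\tau_m$ vanishes for $m>n+1$. If one prefers to avoid the algebra of $\phi$ altogether, the identity can instead be verified by the template method, evaluating both sides on the totally geodesic great spheres $S^\ell_\lambda$ using \eqref{eq_Ckp_on_spheres} and the kinematic interpretation of the Alesker product from Theorem~\ref{thm:product}(2); but the formal route above is shorter and exhibits the origin of the coefficients $\binom{2j}{j}(\lambda/16)^j$ transparently.
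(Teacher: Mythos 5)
Your proof is correct and rests on the same ingredients as the paper's: the relations $t=\phi\bigl(1-\tfrac{\lambda}{4}\phi^2\bigr)^{-1/2}$ from \eqref{eq:t0} and $\phi^k=\sum_j(\lambda/4)^j\tau_{k+2j}$ from \eqref{eq:tau gf}, manipulated formally in the subalgebra generated by $\phi$; the paper merely packages the same computation as a coefficient-extraction identity (its formula for $p(t)$ in terms of the $\tau_j$, specialized to $q(x)=x^k$), whereas you invert \eqref{eq:tau gf} explicitly to $\tau_k=\phi^k\bigl(1-\tfrac{\lambda}{4}\phi^2\bigr)$ and multiply out in the truncated ring. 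The only point needing the brief justification you indicate is the nilpotency $\phi^{n+2}=0$ (e.g.\ by evaluating on the great spheres, where it vanishes identically), so there is no gap.
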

\begin{proof}
By \eqref{eq:tk in terms of tau}, for any formal power series $p$ we have
$$
p(t) = \sum_j [x^j]\left( \left({{1- \frac{\lambda x^2}{4}}}\right)\inv p\left( {x}\left({{1- \frac{\lambda x^2}{4}}}\right)^{-\frac 1 2}\right)\right)\tau_j.
$$
It follows that for any formal power series $q$
$$
t\cdot \sum _j [x^j](q(x)) \tau_j = \sum _j [x^j]\left(x \left(1-\frac {\lambda x^2}4\right)^{-\frac 1 2}q(x)\right) \tau_j .
$$
In particular, this is true of $q(x) = x^k$.
\end{proof}


\subsection{Behavior of the $C_{kp}$ under isometric immersions}\label{sect:behavior isometric} 

\begin{definition} 
 For $2p+2l\le k$, define $C_{kpl}$ to be the object that assigns to any isometric immersion $e:M \hookrightarrow N$ of smooth Riemannian manifolds  the curvature measure
$$ \calC(M) \owns \iota_e(C_{kpl}):=C^{e}_{kpl}:=\begin{cases} \frac{\omega_k}{\pi^k(m+1-k)! \omega_{m+1-k}}\, [ \bar\rho_{rel}(\phi_{kpl}), 0] &  \text{if } k< \dim M = m+1 \\
 \frac{\omega_{k}}{\pi^{k}} [0,\bar\rho_{rel}( \psi_{pl})]  &  \text{if } k= \dim M\\
 0 &  \text{if } k>\dim M.
 \end{cases}
 $$
We denote the $\omega$-span of the $C_{kpl}$, the space of {\it relative Riemannian curvature measures}, by $\calR_{rel}$.
\end{definition}
We introduce the $\omega$-linear isomorphism $ \calR_{rel} \to \RR[[x,y,z]]$ by
\begin{equation}\label{eq: def beta}
\beta: C_{kpj } \mapsto x^{k-2p-2j}y^pz^j.
\end{equation}
The normalizing constants in the definition of the $C_{kpl}$ are chosen such that 
\begin{itemize}
 \item[(1)] $C_{k,0,l}^{e} = C_{k,l}^M$ if $N=\RR^{n+1}$ 
 \item[(2)]  $C_{k,p,l}^{e}= \lambda^p C_{k,0,l}^{e}$ if $N$ is a sphere of curvature $\lambda$.
\end{itemize}

\begin{theorem}\label{thm:isometric 1} Let  $e:M \hookrightarrow N$ be an isometric immersion of one smooth Riemannian manifold into another. Then for all $k,p$
\begin{equation}\label{eq isometric embedding relations}
e^* C^N_{k,p} = \sum_{j=0}^\infty  \binom {\frac{k} 2 +j} j  \left(\frac{1}{4}\right)^j C^{e}_{k+ 2j,p,j}\\
\end{equation}
In other words, if $\delta:\RR[[\xi,\eta]]\to \RR[[ x,y,z]]$ is the $\omega$-linear map 
\begin{equation}\label{eq:isometric gf}
\delta:\xi^k \eta^p \mapsto 
\left(1-\frac{z}{4}\right)\inv 
\left(x\left(1-\frac{z}{4}\right)^{-\frac 1 2}\right)^k
\left(y\left(1-\frac{z}{4}\right)\inv\right)^p
\end{equation}
then $$\iota_e\circ \beta\inv\circ \delta = e^* \circ \iota_N \circ \alpha \inv.$$
\end{theorem}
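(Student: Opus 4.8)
The plan is to reduce the identity $\iota_e\circ \beta^{-1}\circ \delta = e^* \circ \iota_N \circ \alpha^{-1}$ to a statement purely about differential forms on the sphere bundle $SM$, and then to prove that form-level identity using the formal Cartan apparatus. Concretely, unwinding the definitions of $\alpha$, $\beta$, $\delta$, and the realization maps, the asserted equation is equivalent to \eqref{eq isometric embedding relations}, so it suffices to prove that for all $k,p$ (and all $M\hookrightarrow N$)
\begin{equation*}
e^* C^N_{k,p} = \sum_{j=0}^\infty  \binom {\tfrac{k}{2} +j} j  \left(\tfrac{1}{4}\right)^j C^{e}_{k+ 2j,p,j}.
\end{equation*}
First I would observe that since both sides are $\omega$-linear in the "$p$-direction" and, by the normalizing properties recorded after Definition \ref{def R} and after the definition of $C_{kpl}$, multiplication by $\eta$ (resp.\ $y$) corresponds to inserting a factor $\Omega_{ij}$ (resp.\ acting by the curvature of $N$), it is enough to treat the case $p=0$: one checks that $\delta$ intertwines multiplication by $\eta$ with multiplication by $y(1-z/4)^{-1}$, which is exactly the $p$-dependence appearing on the right of \eqref{eq:isometric gf}, and this matches the Gauss equation \eqref{eq:gauss} together with \eqref{eq_Ckp_sphere}. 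So the heart of the matter is the case $p=0$, i.e.\ computing the pullback $e^*C^N_{k,0}$ in terms of the relative curvature measures $C^{e}_{k+2j,0,j}$.

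Next I would compute $e^*\bar\rho_N(\phi_{k,0})$ explicitly. The pullback operation on curvature measures is realized, via Proposition \ref{prop:iota*} (the immersed analogue alluded to in Section \ref{sect:immersed}), by pulling back the defining form on $SN$ to $SM$ through the natural map $SM \to SN$ covered by $F(M,N)\subset \left.FN\right|_M \to FN$. Under this map the realization of $\calA_n$ restricts to $\rho_{rel}$ on $\calA_{m,n}$, so $e^*\bar\rho_N(\phi_{k,0}) = \bar\rho_{rel}$ applied to the image of $\phi_{k,0}$ in $\calA_{m,n}$, and then in $\bar\calA_{m,n}$ after imposing the relations \eqref{eq:bar mn relations}. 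The key algebraic step is to expand $\phi_{k,0}$, which is a sum over permutations of $\{1,\dots,n\}$ of $\theta_{\pi_1}\cdots\theta_{\pi_k}\omega_{\pi_{k+1},0}\cdots\omega_{\pi_n,0}$, and to pair the "extra" connection one-forms $\omega_{\alpha 0}$ with $\theta_\alpha$. Since $\theta_\alpha=0$ in $\calA_{m,n}$ but $\omega_{i\alpha}$ survives, and $\bar\Omega_{ij}=\omega_{i\alpha}\omega_{j\alpha}$, collecting the Greek indices produces exactly the forms $\phi_{k+2j,0,j}$ (with $j$ pairs of $\bar\Omega$'s replacing $2j$ of the missing $\theta$'s among the first $k+2j$ slots, leaving $k$ genuine $\theta$'s). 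The combinatorial bookkeeping — counting how many permutations of the Greek block collapse onto a given term, which is where the binomial coefficient $\binom{k/2+j}{j}$ and the factor $(1/4)^j$ must emerge — is the step I expect to be the main obstacle; the factor $\tfrac14$ should come from the normalization $\bar\Omega_{ij}=\omega_{i\alpha}\omega_{j\alpha}$ versus antisymmetrization conventions, and the half-integer binomial should arise by matching the Maclaurin expansion \eqref{eq:maclaurin1} with $x=1$ formally, i.e.\ by the same generating-function manipulation as in \eqref{eq:tk in terms of tau} (this is the promised "fingerprint" of Lemma \ref{lem_tphi}, since totally geodesic immersion into a sphere is the template against which \eqref{eq:isometric gf} is calibrated via property (2)).

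Finally I would assemble the normalizing constants. The constants in Definition \ref{def R} and in the definition of $C_{kpl}$ differ by the dimension-dependent factor $\tfrac{1}{(m+1-k)!\,\omega_{m+1-k}}$ versus $\tfrac{1}{(m+1-k)!\,\omega_{m+1-k}}$ for the ambient $N$ of dimension $n+1$, so the ratio of the prefactor of $C^N_{k,0}$ to that of $C^e_{k+2j,0,j}$ contributes an explicit factor that must combine with the combinatorial count from the previous paragraph to yield precisely $\binom{k/2+j}{j}(1/4)^j$. I would verify this either by a direct calculation or — more cheaply — by the template method: both sides of \eqref{eq isometric embedding relations} are $\omega$-linear and "universal" in the sense of Section \ref{sect:riem}, the space $\calR$ is $\omega$-spanned by forms whose behavior on spheres is governed by Proposition \ref{prop:R space forms}, so it suffices to check the identity on totally geodesic inclusions $S^{m+1}_\lambda \hookrightarrow S^{n+1}_\lambda$ and on $\RR^{m+1}\hookrightarrow\RR^{n+1}$. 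On the former, $e^*C^N_{k,0}$ and the $C^e_{k+2j,0,j}$ are computable from \eqref{eq:cm relation}, \eqref{eq_Ckp_on_spheres}, and property (2) above, reducing \eqref{eq isometric embedding relations} to the scalar identity between generating functions $x^k(1-\lambda x^2/4)^{-k/2-1}$ and $\sum_j \binom{k/2+j}{j}(\lambda/4)^j x^{k+2j}$ already established in the proof of Lemma \ref{lem_tphi}; on the latter it reduces to property (1) of the $C_{kpl}$, which is \eqref{eq isometric embedding relations} with all higher terms killed since the normal curvature of $\RR^{m+1}$ in $\RR^{n+1}$ vanishes. Once the form-level identity holds on this spanning family of templates, $\omega$-linearity and the universality of the Cartan-apparatus expressions extend it to all isometric immersions, which is the content of the displayed equation and hence of the theorem.
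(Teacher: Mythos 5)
Your overall two-step strategy (first show that $e^*C^N_{kp}$ is a universal linear combination of the $C^e_{k+2j,p,j}$, then pin down the constants on templates) is exactly the paper's architecture, but both steps have genuine gaps as you have set them up. The hard step — the existence of universal constants, which is Lemma \ref{lem:exist iso} in the paper — is never actually carried out: you describe the pullback as "pulling back the defining form on $SN$ to $SM$ through the natural map $SM\to SN$" and pairing $\omega_{\alpha 0}$ with $\theta_\alpha$, but there is no such map, and $e^*$ of a curvature measure is not a restriction of forms. By Proposition \ref{prop:iota*} it is a fiber integral over the open hemispheres of the bundle $p\colon\restrictplus{SN}M\to SM$ (plus a term from the normal sphere bundle), and it is precisely this fiber integration — carried out symbolically in Lemma \ref{lem:canonical maps 2} and Proposition \ref{prop:iso invariants}, then combined with the invariant-theoretic spanning statement of Proposition \ref{prop_inv_elements2} — that produces the universal expansion. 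The combinatorial bookkeeping you flag as "the main obstacle" is not an inessential constant-chase that can be skipped; without the existence statement the template method has nothing to calibrate.

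The second, more decisive problem is your choice of templates. On a totally geodesic inclusion $S^{m+1}_\lambda\hookrightarrow S^{n+1}_\lambda$ or $\RR^{m+1}\hookrightarrow\RR^{n+1}$ the second fundamental form vanishes, hence $\bar\Omega=0$ and every relative term $C^e_{k+2j,p,j}$ with $j>0$ is zero; such templates only verify that the $j=0$ coefficient equals $1$ (essentially Corollary \ref{cor:totally geodesic}, which is a consequence of the theorem, not an input) and give no information whatsoever about the coefficients $\binom{k/2+j}{j}4^{-j}$ for $j>0$. Your appeal to the generating-function identity of Lemma \ref{lem_tphi} in this situation is misplaced, since on a totally geodesic sphere-in-sphere there is no $(1-\lambda x^2/4)$-resummation to match. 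The paper's templates are deliberately \emph{not} totally geodesic: first subspheres $S^{m+1}_\mu\hookrightarrow S^{n+1}_\lambda$ with $\mu\ge\lambda$, where the Gauss equation gives $C^e_{k+2j,0,j}=(\mu-\lambda)^jC^e_{k+2j,0,0}$ and proves the constants are independent of $p$; and then the round sphere $S^{m+1}_\lambda\hookrightarrow\RR^{n+1}$, where globalizing and comparing with $t^k=\sum_j\binom{k/2+j}{j}(\lambda/4)^j\tau_{k+2j}$ (Lemma \ref{lem_tphi}) determines the $j>0$ coefficients. Your reduction to $p=0$ via "multiplication by $\eta$ matches the Gauss equation" is likewise only a heuristic standing in for that first template computation. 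So the proposal as written does not close either the existence step or the determination of the nontrivial coefficients.
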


The proof is based on the following, to be proved in Section \ref{sect:exist iso}.

\begin{lemma}\label{lem:exist iso} There exist universal constants $c^{m,n}_{k,p,j} $ such that
\begin{equation}\label{eq:exist iso}
e^* C^N_{kp}= \sum_j c^{m,n}_{k,p,j} C^{e}_{k+2j,p,j}.
\end{equation}
\end{lemma}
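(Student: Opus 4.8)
\textbf{Proof plan for Lemma \ref{lem:exist iso}.}

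The plan is to carry out the computation in terms of differential forms on the sphere bundles using the formal Cartan apparatus, exploiting the fact that $e^* C^N_{kp}$ is a curvature measure on $M$ lying in $\calC^{O(m)\times O(n-m)}$ once we pass to the relative frame bundle $F(M,N)$. First I would fix the case $k < \dim M$; the boundary case $k = \dim M$ is handled analogously using the $\psi$-forms in place of the $\phi$-forms, and $k > \dim M$ is trivial. By the explicit description of pullback of curvature measures under isometric immersions (Proposition \ref{prop:iota*}, referenced in the text), the form $\bar\rho_N(\phi_{kp})$ representing $C^N_{kp}$, when restricted to $SM$ via the immersion, is computed by pulling back through $F(M,N)$ and fiber-integrating over the fibers of the appropriate sphere bundle map; concretely, one writes $N$'s unit normal sphere bundle over $M$ as an iterated bundle and applies Lemma \ref{lem:fiber} with the canonical connection. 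This turns $e^* C^N_{kp}$ into a form on $SM$ expressed through $\theta_i, \omega_{i0}, \Omega_{ij}^M$ and the forms $\omega_{i\alpha}$ encoding the second fundamental form.

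The key structural observation is that the resulting form is an element of $\bar\calA_{m,n}^+$ realized via $\bar\rho_{rel}$: it is built from $\Omega^N_{ij}$ (which the Gauss equation \eqref{eq:gauss} rewrites as $\Omega^M_{ij} - \bar\Omega_{ij}$), together with $\theta_i$ and $\omega_{i0}$, and it is invariant under $O(m)\times O(n-m)$ because $\phi_{kp}$ is $O(m+1)$-invariant (hence $O(m)$-invariant) and the normal frame was averaged. Therefore, after expanding $(\Omega^M - \bar\Omega)^{\wedge p}$ binomially, Proposition \ref{prop_inv_elements2} forces the outcome to be a linear combination of the $\bar\rho_{rel}(\phi_{k'pl})$ — note $p$ is preserved because each genuine curvature factor $\Omega^M$ or $\bar\Omega$ contributes one to the count $p+l$, and tracking degrees shows $k' - 2p - 2l = k - 2p$, i.e. $k' = k + 2l$; reindexing $l = j$ yields exactly the claimed shape $\sum_j c^{m,n}_{k,p,j} C^e_{k+2j,p,j}$. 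The coefficients $c^{m,n}_{k,p,j}$ are universal because the whole computation — fiber integration over unit spheres in the normal bundle, binomial expansion, and the identification in Proposition \ref{prop_inv_elements2} — depends only on $m,n,k,p,j$ and not on the particular immersion.

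The main obstacle is the bookkeeping in the fiber-integration step: one must identify the correct iterated fiber-bundle structure (sphere bundle of $N$ restricted to $M$ fibered over $SM$, with fiber a sphere $S^{n-m-1}$ of unit normal directions orthogonal to a given tangent direction, or rather the appropriate partial flag), verify that the relevant current is $O(n-m)$-invariant so Lemma \ref{lem:fiber} applies, and check that the connection used there is the canonical one so that $\ev_p$ picks out precisely the horizontal/vertical splitting compatible with $\bar\rho_{rel}$. A secondary subtlety is ensuring that the averaging over positive normal frames does not introduce sign ambiguities — this is where the $\det$-equivariance in the definition of $\bar\calA_{m,n}^{++}$ versus $\bar\calA_{m,n}^+$ must be matched correctly against the fact that $\phi_{kp}$ lies in $\bar\calA_n^+$. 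Once these are in place, the existence statement follows without needing the explicit constants, which are pinned down afterward by the template method (evaluating on spheres) in the proof of Theorem \ref{thm:isometric 1}.
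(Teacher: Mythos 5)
Your strategy is the paper's own: reduce the pullback to a fiber integration via Proposition \ref{prop:iota*}, transport that integration to the formal Cartan model using the canonical connection and Lemma \ref{lem:fiber} (this is precisely what Lemma \ref{lem:canonical maps 2} and Proposition \ref{prop:iso invariants} accomplish), and then use the invariant-theoretic span statement of Proposition \ref{prop_inv_elements2} to conclude universality of the coefficients. The part you flag as the main obstacle---setting up the bundle, its connection, and the $\det$-equivariance bookkeeping---is exactly where the paper's effort goes, so the skeleton is sound.

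There are, however, concrete errors in the bookkeeping that is supposed to produce the shape $\sum_j c^{m,n}_{k,p,j}C^e_{k+2j,p,j}$, which is part of the statement. (a) The mechanism you propose for the index pattern is wrong. The $\phi_{k'p'l}$ (hence the $C^e_{k'p'l}$) are built from the \emph{ambient} curvature symbols $\Omega$ together with $\bar\Omega_{ij}=\omega_{i\alpha}\omega_{j\alpha}$; rewriting $\Omega^N=\Omega^M-\bar\Omega$ and expanding $(\Omega^M-\bar\Omega)^{\wedge p}$ points toward the other basis (the $\Xi_{kpl}$ of Theorem \ref{thm:LK isometric invariance}) and, in any case, can only produce finitely many terms of the \emph{same} degree $k$ with at most $p$ curvature factors. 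It cannot be the source of the $j$-sum: already for $p=0$ (the Weyl tube situation) there is nothing to expand, yet the formula contains all the terms $C^e_{k+2j,0,j}$. In the paper's formal computation the $\bar\Omega^{\,j}$ factors are created by the fiber integration itself: only those terms of $\phi_{kp}$ in which all $n-m$ normal indices occupy connection-form slots $\omega_{\cdot\,0}$ contribute the full fiber volume form, so the number of $\Omega$ factors and of $\theta$ factors is unchanged (forcing $p'=p$ and $k'-2p-2l=k-2p$), while the Roman $\omega$-slots produce second-fundamental-form factors $\omega_{i\alpha}$ that pair into $\bar\Omega$'s after integrating over the fiber. (b) The fiber of the relevant projection $p:\restrictplus{SN}M\to SM$ is the open hemisphere $S^{n-m}_+$ of dimension $n-m$, not a sphere $S^{n-m-1}$; with your guess the degrees in the fiber integration do not balance. (c) The pullback has two components, $e^*[\omega,0]=[p_*i^*\omega,\ q_*j^*\omega]$: the second, an $(m+1)$-form on $M$ obtained by integrating over the full normal sphere bundle, supplies the top-degree ($\psi_{pl}$) term for every $k$ of the appropriate parity; it is not a separate ``case $k=\dim M$'' of the lemma, and for $k=\dim M<\dim N$ the object being pulled back is still represented by $\phi_{kp}$, not by $\psi$-forms. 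None of this invalidates the overall approach, but as written the step that yields the asserted index structure would fail.
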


\begin{proof}[Proof of Theorem \ref{thm:isometric 1}] Assuming Lemma \ref{lem:exist iso}, we evaluate the constants in \eqref{eq:exist iso}
 via the template method. 
First, choose $N=S^{n+1}_\lambda$ and let $M=S^{m+1}_\mu$ be a subsphere of curvature $\mu\geq \lambda$. Then 
$$e^* C^N_{kp} = \sum_{j=0}^\infty c^{m,n}_{k,p,j} C_{k+2j,p,j}^{e}= \lambda^p \sum_{j=0}^\infty c^{m,n}_{k,p,j} C_{k+2j,0,j}^{e}. $$
  Since $C^N_{kp}= \lambda^p C^N_{k0}$ we also have 
$$e^* C^N_{kp} = \lambda^p e^* C^N_{k0} = \lambda^p\sum_{j=0}^\infty c^{m,n}_{k,0,j} C^{e}_{k+2j,0,j}.$$
The Gauss equation \eqref{eq:gauss} now yields  $C^{e}_{k+2j,0,j}= (\mu-\lambda)^j C^{e}_{k+2j,0,0}$, so we conclude that $c^{m,n}_{k,p,j} = c^{m,n}_{k,0,j}=:c^{m,n}_{k,j}$. 

In order to determine these constants we consider a standard embedding of $M=S^{m+1}_\lambda$ into $ N=\RR^{n+1}$. Then 
$$
e^* C^N_{k0}= \sum_{j=0}^\infty c^{m,n}_{k,j} C^{e}_{k+2j,0,j} = \sum_{j=0}^\infty c^{m,n}_{k,j} C^M_{k+2j,j}=\sum_{j=0}^\infty c^{m,n}_{k,j} \lambda^j C^M_{k+2j,0} $$
 Globalizing this relation yields
 $$
 t^k =\sum_{j=0}^\infty c^{m,n}_{k,j} \lambda^j \tau_{k+2j}.
 $$
Now \eqref{eq isometric embedding relations} follows from Lemma \ref{lem_tphi}.
\end{proof}

\begin{corollary}\label{cor:totally geodesic} If the embedding $e:M\to N$ is totally geodesic, then $e^*C^N_{kp} = C^M_{kp}$ for all $k,p$.
\end{corollary}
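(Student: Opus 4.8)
The plan is to deduce the corollary from Theorem~\ref{thm:isometric 1} by showing that, when $e$ is totally geodesic, the relative curvature measures degenerate: $C^{e}_{k,p,l}=C^M_{kp}$ for $l=0$ and $C^{e}_{k,p,l}=0$ for $l\ge 1$.

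First I would recall that $e$ is totally geodesic precisely when its second fundamental form vanishes identically, which in the formalism of Section~\ref{sect:immersed} means $\rho_{rel}(\omega_{i\alpha})=0$ for all $i\le m<\alpha$, and hence $\rho_{rel}(\bar\Omega_{ij})=\rho_{rel}(\omega_{i\alpha}\omega_{j\alpha})=0$. Using the splitting $\barbar\calA_{m,n}\simeq\bar\calA_m\otimes\calN$ from the proof of Proposition~\ref{prop_inv_elements2}, let $\pi_0\colon\barbar\calA_{m,n}\to\bar\calA_m$ be the algebra homomorphism killing every $\omega_{i\alpha}$, and in particular every $\bar\Omega_{ij}$. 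I claim that $\bar\rho_{rel}=\bar\rho_M\circ\pi_0$ once the second fundamental form vanishes: on the generators $\theta_i$ and $\omega_{i0}$ with $i\le m$ the maps $\bar\rho_{rel}$ and $\bar\rho_M$ agree, because these generators involve only the induced metric of $M$ and the tangential part of the ambient Levi-Civita connection, which is the Levi-Civita connection of $M$; on $\Omega_{ij}$ with $i,j\le m$ the Gauss equation \eqref{eq:gauss} together with $\rho_{rel}(\bar\Omega_{ij})=0$ gives $\bar\rho_{rel}(\Omega_{ij})=\Omega^N_{ij}\big|_M=\Omega^M_{ij}=\bar\rho_M(\Omega_{ij})$; and $\bar\rho_{rel}(\bar\Omega_{ij})=0=\bar\rho_M(\pi_0\bar\Omega_{ij})$. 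Since $\pi_0(\phi_{k,p,l})=\phi_{kp}$ if $l=0$ while $\pi_0(\phi_{k,p,l})=0$ if $l\ge1$, and likewise $\pi_0(\psi_{p,l})=\delta_{l,0}\psi_p$, and since the normalizing constants appearing in the definitions of $C^{e}_{kpl}$ and $C^M_{kp}$ are identical, we obtain $C^{e}_{k,p,l}=\delta_{l,0}\,C^M_{kp}$ (with $\phi$ replaced by $\psi$ in the top-dimensional case).

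The corollary then drops out of Theorem~\ref{thm:isometric 1}: in \eqref{eq isometric embedding relations} every term with $j\ge1$ vanishes, and the surviving $j=0$ term carries the coefficient $\binom{k/2}{0}(1/4)^0=1$, so $e^*C^N_{kp}=C^{e}_{k,p,0}=C^M_{kp}$; for $k>\dim M$ both sides are zero by definition. I expect the only delicate point to be the claim that $\bar\rho_{rel}$ and $\bar\rho_M$ agree on $\theta_i$ and $\omega_{i0}$, i.e. that descending from the ambient adapted frame bundle $F(M,N)$ to $SM$ introduces no dependence on the normal directions once the second fundamental form is killed. Concretely this reduces to checking that the canonical connection of $z\colon FM\to SM$ is the restriction of that of $z\colon F(M,N)\to SM$ under the inclusion completing an $M$-frame to an adapted $N$-frame, which follows because $\nabla^M$-horizontal curves in $M$ are $\nabla^N$-horizontal when $e$ is totally geodesic; this should nonetheless be written out explicitly.
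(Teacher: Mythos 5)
Your proposal is correct and follows essentially the route the paper intends: the corollary is designed to fall out of Theorem \ref{thm:isometric 1} once one notes that total geodesy kills the realizations of the $\bar\Omega_{ij}$, so that by the Gauss equation \eqref{eq:gauss} one has $C^{e}_{k+2j,p,j}=0$ for $j\ge 1$ and $C^{e}_{k,p,0}=C^M_{kp}$, leaving only the $j=0$ term with coefficient $1$. Your closing worry about the agreement of $\bar\rho_{rel}$ and $\bar\rho_M$ on $\theta_i,\omega_{i0}$ is harmless: this agreement holds for any isometric immersion (the tangential part of the ambient connection is the intrinsic one), exactly as the paper's relative formalism presupposes, e.g.\ in \eqref{eq:Gamma C}.
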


\subsection{Lipschitz-Killing curvature measures}\label{sect:LK curve}
\begin{definition}\label{def_LK} Define the subspace $\widetilde{\calLK}\subset \calR$ 
as the $\omega$-span of the elements
\begin{equation}\label{eq lambda}
\bar \Lambda_{k} := \sum_{j=0}^\infty    \binom{{\frac k 2}+j}{j}  \left(\frac{1}4\right)^{j} C_{k+2j,j}, \quad
k=0,1,\dots.
\end{equation}
\end{definition}
The alternative normalization $ \Lambda_k := \frac{\pi^k}{k!\, \omega_k}\bar \Lambda_{k}$ agrees with the corresponding definition in \cite{bfs}. 
If $k= \dim M$ then $\Lambda_k^M(A,\cdot)$ equals the Riemannian  volume measure restricted to $A \subset M$. However, the $\bar \Lambda_k$ are more natural algebraically, since
\begin{equation}\label{eq:Lambda code}
\alpha( \bar \Lambda_k) = \xi^k\left(1-\frac \eta 4\right)^{-\frac k 2 -1}.
\end{equation}

\begin{theorem}\label{thm:LK isometric invariance} The subspace $\widetilde{\calLK}$ consists precisely of all elements $\Psi \in \calR$ such that 
\begin{equation}\label{eq:iso invariance}
e^*\Psi^N= \Psi^M
\end{equation}
for every smooth isometric immersion $e:M \hookrightarrow N$   of smooth Riemannian manifolds.
\end{theorem}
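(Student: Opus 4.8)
The plan is to prove the two inclusions separately, translating everything into the power-series picture via the isomorphism $\alpha:\calR \to \RR[[\xi,\eta]]$ of \eqref{eq:C code} and the isomorphism $\beta:\calR_{rel}\to\RR[[x,y,z]]$ of \eqref{eq: def beta}. The key object is the map $\delta$ of \eqref{eq:isometric gf}, which by Theorem \ref{thm:isometric 1} computes the effect of an arbitrary isometric immersion: $\iota_e\circ\beta^{-1}\circ\delta = e^*\circ\iota_N\circ\alpha^{-1}$. So the defining condition \eqref{eq:iso invariance} on $\Psi$, once we know $\Psi\in\calR$ is determined by its realizations (which holds because two universal curvature measures agreeing on all $M$ are equal), becomes a condition purely about $\delta$ and the ``pushforward'' map $\calR_{rel}\to\calR$ induced by restricting an immersion to the identity $M=N$. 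Concretely, if $\pi:\RR[[x,y,z]]\to\RR[[\xi,\eta]]$ denotes the $\omega$-linear map $C_{kpl}\mapsto C_{k+\cdots}$ corresponding to ``forgetting'' that a curvature measure came from an immersion (i.e. setting $z$ to behave as the intrinsic curvature contribution), then invariance of $\Psi=\iota_\bullet(\alpha^{-1}f)$ under all isometric immersions is equivalent to $\pi\circ\delta(f)=f$, i.e. $f$ lies in a certain eigenspace.

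First I would treat the inclusion $\widetilde{\calLK}\subset\{\Psi: e^*\Psi^N=\Psi^M\ \forall e\}$. By \eqref{eq:Lambda code} we have $\alpha(\bar\Lambda_k)=\xi^k(1-\tfrac\eta4)^{-k/2-1}$. Applying $\delta$ from \eqref{eq:isometric gf} term by term to this series — using the Maclaurin expansion \eqref{eq:maclaurin1} to expand $(1-\eta/4)^{-k/2-1}=\sum_j\binom{k/2+j}{j}(\eta/4)^j$ and then substituting $\xi\mapsto x(1-z/4)^{-1/2}$, $\eta\mapsto y(1-z/4)^{-1}$, and multiplying by the overall factor $(1-z/4)^{-1}$ — should collapse to an expression that is manifestly the image of $\xi^k(1-\eta/4)^{-k/2-1}$ under the natural ``inclusion'' $\RR[[\xi,\eta]]\hookrightarrow\RR[[x,y,z]]$ followed by the interpretation of $z$ as the relative curvature. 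The point is that the geometric series in $\eta$ is exactly engineered to be stable under the two substitutions $\xi\mapsto x(1-z/4)^{-1/2}$ and $\eta\mapsto y(1-z/4)^{-1}$ together with the prefactor; this is the ``$\calL,\calM$ transform'' phenomenon hinted at in the introduction. I would verify this as a bookkeeping identity in $\RR[[x,y,z]]$: that $(1-z/4)^{-1}\big(x(1-z/4)^{-1/2}\big)^k\big(1-\tfrac14 y(1-z/4)^{-1}\big)^{-k/2-1}$ equals $x^k\big(1-\tfrac14(y+z)\big)^{-k/2-1}$ up to the substitution identifying the combined curvature, which is precisely the statement that the Gauss-equation decomposition $\Omega^M = \Omega^N + \bar\Omega$ leaves $\bar\Lambda_k$ unchanged.

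For the reverse inclusion, I would argue that if $\Psi\in\calR$ satisfies \eqref{eq:iso invariance} for all isometric immersions, then in particular it is invariant under the immersions used as templates in the proof of Theorem \ref{thm:isometric 1}: standard subsphere inclusions $S^{m+1}_\mu\hookrightarrow S^{n+1}_\lambda$ and standard sphere embeddings $S^{m+1}_\lambda\hookrightarrow\RR^{n+1}$. Writing $\alpha(\Psi)=\sum_{k}g_k(\eta)\xi^k$ (allowing infinite sums) and globalizing, invariance under all $S^j_\lambda\hookrightarrow S^{n+1}_\lambda$ and \eqref{eq_Ckp_sphere} force, degree by degree in $\xi$, a recursion on the coefficients of $g_k$ that by Lemma \ref{lem_tphi} has $\xi^k(1-\eta/4)^{-k/2-1}$ as its unique solution in each degree $k$ — using that the $\tau_j$ of \eqref{eq_def_tau} form a basis and the binomial coefficients $\binom{k/2+j}{j}$ appearing in \eqref{eq:t02} are nonzero. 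Hence $\alpha(\Psi)$ is an $\omega$-linear combination of the $\alpha(\bar\Lambda_k)$, i.e. $\Psi\in\widetilde{\calLK}$. The main obstacle I anticipate is the first inclusion's power-series identity: one must be careful that the substitution $\eta\mapsto y(1-z/4)^{-1}$ interacts correctly with the fractional binomial series and with the prefactor $(1-z/4)^{-1}$, and that the resulting identity genuinely expresses Gauss-equation stability rather than a coincidence of low-order terms — the safest route is to recognize the combined effect as a single substitution $(\xi,\eta)\mapsto$ (new combined curvature) and check that $\bar\Lambda_k$'s generating function is a fixed point of exactly that substitution.
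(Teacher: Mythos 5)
Your proposal is correct, and it splits into a half that coincides with the paper and a half that does not. For the inclusion $\widetilde{\calLK}\subset\{\Psi:\ e^*\Psi^N=\Psi^M\}$ you argue exactly as the paper does: feed $\alpha(\bar\Lambda_k)=\xi^k(1-\eta/4)^{-k/2-1}$ into the map $\delta$ of Theorem \ref{thm:isometric 1} and verify the identity $\left(1-\tfrac z4\right)\inv\left(x\left(1-\tfrac z4\right)^{-1/2}\right)^k\left(1-\tfrac y4\left(1-\tfrac z4\right)\inv\right)^{-k/2-1}=x^k\left(1-\tfrac{y+z}4\right)^{-k/2-1}$, which is right. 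The only thing missing is how to make ``up to the substitution identifying the combined curvature'' precise: the paper does this by passing to the alternate relative basis $\Gamma_{kpl}$ built from $\Omega^N,\Omega^M$ instead of $\Omega^N,\bar\Omega$, so that the Gauss equation \eqref{eq:gauss} becomes the substitution $z\mapsto z-y$ (turning the displayed series into $x^k(1-z/4)^{-k/2-1}$) and the identity $\Gamma^e_{k,0,p}=C^M_{kp}$ of \eqref{eq:Gamma C} identifies the result with $\bar\Lambda^M_k$. This is a presentational, not mathematical, gap.

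For the converse your route is genuinely different from the paper's, and it works, with one correction: the totally geodesic inclusions $S^j_\lambda\hookrightarrow S^{n+1}_\lambda$ carry no information, since by Corollary \ref{cor:totally geodesic} \emph{every} element of $\calR$ is invariant under them; the effective templates are the curved ones, e.g. the round spheres $e:S^{m+1}_\lambda\hookrightarrow\RR^{n+1}$. For those, $\Psi^{\RR^{n+1}}=\sum_k a_{k0}C^{\RR^{n+1}}_{k0}$, Theorem \ref{thm:isometric 1} plus the Gauss equation give $e^*C^{\RR^{n+1}}_{k0}=\sum_j\binom{k/2+j}{j}(\lambda/4)^j\,C^M_{k+2j,0}$, while $\Psi^M=\sum_{l,q}a_{lq}\lambda^q C^M_{l,0}$ by \eqref{eq_Ckp_sphere}; since the $C^{S^{m+1}_\lambda}_{l,0}$ are linearly independent (e.g.\ by \eqref{eq:cm relation}, or because the $\tau_l$ form a basis), comparing coefficients of $\lambda^q$ for all $\lambda>0$ and all $m$ yields $a_{k+2q,q}=\binom{k/2+q}{q}4^{-q}a_{k0}$ outright, i.e. $\Psi=\sum_k a_{k0}\bar\Lambda_k$ — no uniqueness-of-recursion argument is needed, and your appeal to \eqref{eq:t02} is just the globalized form of this computation. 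The paper proves the converse quite differently: after an induction on degree it embeds $M_\lambda=\RR^k\times S^{2j}_\lambda$ in $\RR^{k+2j+1}$, lets $\lambda\to\infty$ so that the normal cycles converge to $2\,\nc(\RR^k)$ in the flat topology, and uses the weak continuity \eqref{eq:weak convergence} to determine $a_{k+2j,j}$ from $a_{k0}$. Your version is shorter and purely computational, at the price of leaning on Theorem \ref{thm:isometric 1} (hence on the hard Lemma \ref{lem:exist iso}) also for the converse, whereas the paper's degeneration argument uses only the definition of $\calR$ and soft continuity properties of curvature measures.
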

\begin{proof}
 We introduce an alternate basis for the space of relative Riemannian curvature measures as follows.
Put
$$\Xi_{kpl}=\sum _{\pi} \sgn(\pi) \Omega^N_{\pi_1 \pi_2} \cdots  \Omega^N_{\pi_{2p-1} \pi_{2p}} \Omega^M_{\pi_{2p+1} \pi_{2p+2}} \cdots  \Omega^M_{\pi_{2p + 2l-1} \pi_{2p+2l}}  \theta_{\pi_{2p+2l+1} }\cdots  \theta_{\pi_k} \omega_{\pi_{k+1},0} \cdots 
\omega_{\pi_{m},0} 
$$
for $2p+2l\leq k$, and
$$\Theta_{pl}= \sum _{\pi} \sgn(\pi) \theta_{\pi_0} \Omega^N_{\pi_1 \pi_2} \cdots  \Omega^N_{\pi_{2p-1} \pi_{2p}} \Omega^M_{\pi_{2p+1} \pi_{2p+2}} \cdots  \Omega^M_{\pi_{2p + 2l-1} \pi_{2p+2l}}  \theta_{\pi_{2p+2l+1} }\cdots  \theta_{\pi_m}  
$$
 For $2p+2l\le k$, define $\Gamma_{kpl}\in \calR_{rel}$ to be the object that assigns to any  smooth isometric embedding $e:M^{m+1} \to N^{n+1}$  the curvature measure
$$ \Gamma^{e}_{kpl}:=\begin{cases} \frac{\omega_k}{\pi^k(m+1-k)! \omega_{m+1-k}}\, [ \Xi_{kpl}, 0] &  \text{if } k<m+1 \\
 \frac{\omega_{k}}{\pi^{k}} [0, \Theta_{pl}]  &  \text{if } k=m+1 \\
 0 &  \text{if } k>m+1.
 \end{cases}
 $$
 on $M$. Thus
 \begin{equation}\label{eq:Gamma C}
 \Gamma^e_{k,0,p} = C^M_{k,p}
 \end{equation}
 and
 \begin{equation}
 C_{kpl} = \sum_{j=0}^l (-1)^j \binom l j \Gamma_{k,p+j, l-j}.
 \end{equation}
 In other words, if we define the $\omega$-linear map 
 $$
 \gamma:  \Gamma_{kpj } \mapsto x^{k-2p-2j}y^p z^j
 $$ 
 then $ \gamma\circ \beta\inv(  x^a y^b z^c) =  x^a y^b (z-y)^c$ .

 We  first give a proof, using this formalism, of the well known fact (cf. the Introduction) that $e^*\bar \Lambda^N_k =\bar  \Lambda^M_k$ for any smooth isometric immersion $e:M \hookrightarrow N$. By \eqref {eq:Lambda code} and \eqref{eq:Gamma C} this is encoded in the relation
 \begin{equation}\label{eq:immersion transform}
 e^*\circ\iota_N \circ\alpha\inv\left(\xi^k\left(1-\frac \eta 4\right)^{-\frac k 2 -1}\right)=   \iota_e\circ\gamma \inv\left( x^k \left(1-\frac z{4} \right)^{-\frac k 2 -1}\right).
 \end{equation}
By Theorem \ref{thm:isometric 1}, this relation holds, since
\begin{align*}
  \xi^k\left(1-\frac \eta 4\right)^{-\frac k 2 -1} &\xrightarrow{\ \delta  \ } 
\left(1-\frac{z}{4}\right)\inv  
\left(x\left(1-\frac z{4}\right)^{-\frac 1 2}\right)^k
\left(1-\frac y {4} \left(1-\frac{z}{4}\right)\inv\right)^{-\frac k 2 -1}\\
&=x^k \left(1-\frac z{4} - \frac y 4\right)^{-\frac k 2 -1} \\
&\xrightarrow{\gamma\circ\beta\inv}  x^k \left(1-\frac z{4} \right)^{-\frac k 2 -1}.
 \end{align*}

Conversely, suppose $\Psi = \sum_{kp} a_{kp} C_{kp}\in \calR$ satisfies \eqref{eq:iso invariance} for all isometric immersions $e:M\hookrightarrow N$.
 We wish to show that for each $k\ge 0$ the sequence of coefficients $(a_{k+2j,j})_j $ is proportional to  $\left(\binom{\frac k 2 + j}{j} \left(\frac 1 4\right)^j \right)_j$. We know this to be the case if $\Psi \in \widetilde{\calLK}$. Fixing $k$, it  is therefore enough merely to show that the $a_{k+2j,j}, j >0, $ are determined by $A:= a_{k,0}\frac {\omega_k k!}{\pi^k}$. By induction we may assume that the statement is true for $0,\dots,k-1$; subtracting the  terms that correspond to these values, we may assume that all $a_{jp}=0, \ j\le k-1$.

 Given $j >0$, consider the standard embedding $e: \RR^k \to \RR^{k+2j+1}$. Thus $e^* (\Psi^{\RR^{k+2j+1}})=\Psi^{\RR^k} = A \Phi_k^{\RR^k}$. Taking
$M_\lambda:= \RR^k \times S^{2j}_\lambda$, with standard isometric embeddings $e_\lambda:M_\lambda \hookrightarrow \RR^{k+2j+1}$, as subsets of $\RR^{k+2j+1}$ the normal cycles $\nc(e_\lambda(M_\lambda)) \to \chi(S^{2j}) N(\RR^k)= 2N(\RR^k)$ in the flat metric topology as $\lambda \to \infty$. This is an elementary case of Theorem 3.7 of \cite{fu-mcc}; alternatively it follows from the product formula for normal cycles and the fact that if $rS^{2k}\subset \RR^{2k+1}$ is the standard sphere of radius $r$ then $\lim_{r\downarrow 0}\nc(rS^{2k}) =2 \nc(\{0\})$. Therefore  by \eqref{eq:weak convergence}
$$
\Psi^{\RR^{k+2j+1}}(e_\lambda(M_\lambda),\cdot) \rightharpoonup 2\Psi^{\RR^{k+2j+1}}(\RR^k,\cdot) 
= 2A \Phi_k(\RR^k,\cdot)
$$
as measures on $\RR^{k+2j+1}$. On the other hand, the hypothesis of isometric invariance implies that
$$
\Psi^{\RR^{k+2j+1}}(e_\lambda(M_\lambda),\cdot)  = e_{\lambda *} \left[\Psi^{M_\lambda}(M_\lambda,\cdot)\right]
$$
But it is clear that the signed measures 
$e_{\lambda*}\left[C^{M_\lambda}_{lm}(M_\lambda,\cdot)\right] \rightharpoonup 0 $ for $(l,m) \ne (k+2j,j)$, and that 
$C^{M_\lambda}_{k+2j,j}(M_\lambda,[0,1]^k \times S^{2j}_\lambda) =: c \ne 0$,  independent of $\lambda$.
It follows that  $a_{k+2j,j} = \frac {2A}c$.
\end{proof}

\subsection{Lipschitz-Killing valuations}

The Lipschitz-Killing curvature measures are distinguished also by the fact that their globalizations behave in a universal fashion with respect to Alesker multiplication.
%
%

\subsubsection{The universal module}
Given a smooth Riemannian manifold $M$, we set $\calLK(M)\subset \calV(M)$ for the {\bf Lipschitz-Killing algebra } of $M$, generated by 
\begin{equation}\label{eq:def tM}
t_M := \glob(\bar \Lambda_1^M) \in \calV(M).
\end{equation}

\begin{lemma}\label{lem_module_existence}
Given $k,p,m$, there exists $\Psi_{k,p,m} \in \calR$ such that, if $M$ is any smooth oriented Riemannian manifold of dimension $m+1$, then
$$
t_M \cdot C_{kp}^M =  \Psi_{k,p,m}^M.
$$ 
In particular, $t_M\cdot \calR(M)\subset \calR(M)$ for  every oriented Riemannian manifold $M$.
\end{lemma}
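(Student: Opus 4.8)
First I would recast the assertion: the goal is to produce, for each $(k,p,m)$, a single formal identity in the Cartan apparatus, independent of $M$, whose realization under $\bar\rho_M$ on every oriented $(m+1)$-manifold equals $t_M\cdot C_{kp}^M$. Since $t_M=\glob(\bar\Lambda_1^M)$ is represented by the same forms as $\bar\Lambda_1^M$, and since both $\bar\Lambda_1^M$ and $C_{kp}^M$ are by definition realizations $\bar\rho_M(\cdot)$ of fixed elements of $\bar\calA_m$ (a universal combination of the $\phi_{1+2j,j}$ in the first case, $\phi_{kp}$ in the second), the whole problem is to compute the Alesker product at the level of differential forms. For this I would invoke the explicit Alesker--Bernig description of the module action, Theorem \ref{thm:def mult}: the product of a valuation and a curvature measure, both represented by forms on $SM$ (and top-degree forms on $M$), is obtained from these data by a prescribed chain of \emph{natural} operations — pullback along the canonical projections relating $SM$, the fiber product $SM\times_M SM$, and $M$; exterior multiplication; multiplication by the canonical contact $1$-form $\bar\rho_M(\theta_0)$ and by its exterior derivative; and fiber integration along a sphere factor.

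The substance of the proof — the technical core of Section \ref{sect:lemmas} — is to show that every one of these operations has a formal counterpart on the Cartan algebras $\calA_m,\bar\calA_m,\calB_m$ that intertwines with the realization maps $\rho$ and $\bar\rho$. Changes of frame and pullbacks correspond to the homomorphisms $L_g$ and to the inclusion and quotient maps already introduced; exterior multiplication is the product of $\calA_m$; multiplication by the contact form and its differential is multiplication by $\theta_0$ and, via the structure equations \eqref{structure_equations}, by $-\omega_{0j}\theta_j$; and fiber integration along a sphere factor is the integration of an $O(m)$-invariant form over an $O(m)$-invariant current in the model fiber $S^m$, which by Lemma \ref{lem:fiber} is computed using the canonical connection of $z\colon FM\to SM$ (Section \ref{sect:infinitesimal parallel}) and reduces to an explicit integral over $S^m$. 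The one way this computation could escape the finitely generated algebra $\calA_m$ would be by producing covariant derivatives of curvature; but the structure equations express $d'\theta,d'\omega,d'\Omega$ again through $\theta,\omega,\Omega$, so the calculation closes inside $\calA_m$ even though ${d'}^2\ne 0$. Working with infinitesimally parallel frames, which make the $\omega_{ij}$ with $0\notin\{i,j\}$ vanish at the base point, keeps the intermediate expressions manageable.

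Carrying this out yields $t_M\cdot C_{kp}^M=[\bar\rho_M(\Theta),\bar\rho_M(\Theta')]$, where $\Theta$ has formal degree $m$ and $\Theta'$ has formal degree $m+1$; moreover, modulo the contact ideal, $\Theta\in\bar\calA_m^+$ and $\Theta'\in\calB_m^+$, because the product construction is equivariant under isometries and both inputs are canonically attached to the Riemannian structure, while $\Theta=\Theta'=0$ once $k\ge m+1$, since $t_M$ raises the valuation-degree by one and $\dim M=m+1$. By Proposition \ref{prop_inv_elements1}, $\Theta$ is a finite combination $\sum_{l,q}a^{(m)}_{l,q}\phi_{l,q}$ and $\Theta'$ a combination $\sum_q b^{(m)}_q\psi_q$, with coefficients depending only on $(k,p,m)$. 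Letting $\Psi_{k,p,m}$ be the corresponding $\omega$-linear combination of the $C_{l,q}$ — a finite combination, since only $l\le m+1$ contribute and $C_{l,q}^M=0$ for $l>\dim M$, hence a genuine element of $\calR$ — the universality of the construction gives $\Psi_{k,p,m}^M=t_M\cdot C_{kp}^M$ for every oriented Riemannian $M$ of dimension $m+1$. Since every element of $\calR(M)$ is an $\omega$-linear combination of the $C_{kp}^M$, with only finitely many $(k,p)$ in each degree, $t_M\cdot\calR(M)\subset\calR(M)$ follows at once.

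I expect the main obstacle to be exactly the translation step of the second paragraph: for each ingredient of the Alesker--Bernig formula one must build the formal model, verify it intertwines correctly with $\rho$ and $\bar\rho$ — including the bookkeeping forced by the Legendrian/contact conditions, under which various formal terms are annihilated — and, crucially, match the sphere-fiber integration with the formal quotient $\bar\calA_m\to\calB_m$ through the canonical connection and Lemma \ref{lem:fiber}. The explicit values of the constants $a^{(m)}_{l,q},b^{(m)}_q$ play no role here; they are pinned down afterwards by the template method, evaluating on round spheres of varying curvature.
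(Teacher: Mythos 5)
Your architecture coincides with the paper's: the Alesker--Bernig formula at the level of forms (Theorem \ref{thm:def mult}), formal models of the pullbacks and of the sphere-fiber integration that intertwine with the realization maps (the paper's Lemmas \ref{lem:canonical maps} and \ref{lem:I}, resting on Lemma \ref{lem:fiber} and the canonical connection of Section \ref{sect:infinitesimal parallel}), then invariance together with Proposition \ref{prop_inv_elements1} to identify the output as a finite combination of the $\phi_{lq}$ and $\psi_q$, with the constants left to the template method afterwards. That is exactly the route taken in Section \ref{Section_existence_of_module}.

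The one place where your plan, as written, would not go through is the treatment of $\Delta_{t_M}$. The Alesker--Bernig formula requires the \emph{Rumin differential} $D$ of the form $\bar\rho(\lambda_1)$ representing $t_M$, not ``multiplication by the contact form and by its exterior derivative'': $D\rho=d(\rho+\theta_0\wedge\delta)$, where the correction $\delta$ is determined only implicitly by the contact structure, so $D$ is not among the natural operations you list and a priori its output need not be the realization of any element of the formal algebra. The paper's proof is explicitly based on Lemma \ref{lem:formal d lambda 1}: the formal derivative $d\lambda_1$ is a multiple of $\theta_0$ plus an element of $\calB_m$, i.e.\ plus a horizontal form of degree $m+1$, whence $D(\bar\rho(\lambda_1))\equiv\bar\rho(d\lambda_1)$ modulo horizontal forms (Proposition \ref{prop:cartan D}), and horizontal $(m+1)$-forms may be discarded in the fiber integral for dimensional reasons. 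Your remark about ``Legendrian/contact bookkeeping'' gestures at this, but the specific observation that makes the Rumin differential computable inside the Cartan apparatus is the missing ingredient; once it is supplied, the remainder of your outline reproduces the paper's argument.
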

We postpone the proof of Lemma \ref{lem_module_existence} to Section \ref{Section_existence_of_module}
 below.


Assuming  Lemma \ref{lem_module_existence}, we now determine the $\Psi_{kpm}\in \calR$. 
Using the identification \eqref{eq:C code} of $\calR$ with $\RR[[\xi,\eta]]$, we define an  action of $\RR[t]$ on $\calR$ by
\begin{equation}\label{eq:t action 1}
\alpha(t\cdot \Psi ):= \frac{\xi}{\sqrt{1-\frac {\eta} 4}}\alpha(\Psi).
\end{equation}
Thus by \eqref{eq:Lambda code} 
\begin{equation}
t\cdot\bar \Lambda_k =\bar \Lambda_{k+1}.
\end{equation}
Writing out \eqref{eq:t action 1} explicitly:
\begin{align}\label{eq:mult t 1}
t\cdot C_{kp}   &= \sum_{j=0}^\infty \binom{2j} j \frac 1 {16^j} C_{k+2j +1, p+j}, \\
\label{eq:mult t^i} t^{i} \cdot C_{kp} &= \sum_{j=0}^\infty \binom{\frac i 2 +j-1}{j} \frac{1}{4^j}\, C_{k+i+ 2j,p+j} .
\end{align}

\begin{theorem}\label{thm:LK}
The Alesker powers of $t_M$ are given by
\begin{equation}\label{eq:tMk}
t_M^k = \glob(\bar \Lambda_k^M).
\end{equation}
 The algebra  homomorphism $\RR[t] \to \calLK(M)\subset \calV(M)$ determined by  $t \mapsto t_M$ intertwines the actions of $\RR[t]$ on $\calR $ and of $\calLK(M)$ on $\calR(M)$, i.e.
\begin{equation}
(t\cdot \Phi)^M =  t_M \cdot \Phi^M.
\end{equation}
\end{theorem}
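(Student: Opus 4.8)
The plan is to assume Lemma \ref{lem_module_existence}, which guarantees the existence of $\Psi_{k,p,m} \in \calR$ with $t_M \cdot C_{kp}^M = \Psi_{k,p,m}^M$, and then to pin down the coefficients of $\Psi_{k,p,m}$ by the template method, exactly as in the proof of Theorem \ref{thm:isometric 1}. First I would observe that $\glob$ is a $\calV(M)$-module homomorphism (Theorem \ref{thm:product}(1)), so globalizing $t_M\cdot C_{kp}^M$ gives $t_M \cdot \glob(C_{kp}^M)$, and it suffices to compute these products on enough test manifolds. The natural templates are again the spheres $S^{n+1}_\lambda$ of varying curvature: there $\glob(C_{k0}^{S^{n+1}_\lambda}) = \tau_k$ and $t_M$ restricts to $t$ in the notation of Section \ref{sect:some facts}, so the action of $t_M$ on the $\tau_k$ is governed by Corollary \ref{cor:t acts on tau}, namely $t\cdot \tau_k = \sum_{j\ge 0}\binom{2j}{j}(\lambda/16)^j \tau_{k+2j+1}$.

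The key step is to match this against the expansion \eqref{eq:mult t 1} one is trying to prove: writing $t\cdot C_{kp} = \sum_j b_{k,p,j}\, C_{k+2j+1,p+j}$ with universal constants $b_{k,p,j}$ coming from Lemma \ref{lem_module_existence} (independent of $M$ up to the dimension truncation), one uses \eqref{eq_Ckp_sphere}, i.e. $C^{S^{n+1}_\lambda}_{k+2j+1,p+j} = \lambda^{p+j} C^{S^{n+1}_\lambda}_{k+2j+1,0}$, to reduce to the case $p=0$ and then globalize. Comparing $\glob(t\cdot C_{k0}^{S^{n+1}_\lambda}) = \sum_j b_{k,0,j}\lambda^j \tau_{k+2j+1}$ with Corollary \ref{cor:t acts on tau} forces $b_{k,0,j} = \binom{2j}{j} 16^{-j}$, and the reduction gives $b_{k,p,j} = b_{k,0,j}$, which is precisely \eqref{eq:mult t 1}. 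The identity \eqref{eq:mult t^i} then follows by iterating, or equivalently by noting that the generating-function operator in \eqref{eq:t action 1} is multiplication by $\xi(1-\eta/4)^{-1/2}$, whose $i$-th power is $\xi^i(1-\eta/4)^{-i/2}$; expanding via \eqref{eq:maclaurin1} reproduces the stated binomial coefficients. In particular \eqref{eq:tMk} is the special case $C_{00}$ together with $\bar\Lambda_0 = \sum_j \binom{j}{j}(1/4)^j C_{2j,j}$ and the relation $t\cdot \bar\Lambda_k = \bar\Lambda_{k+1}$, which is immediate from \eqref{eq:Lambda code} and \eqref{eq:t action 1}.

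Finally, the intertwining statement $(t\cdot\Phi)^M = t_M\cdot\Phi^M$ is essentially a restatement of what has just been proved: for $\Phi = C_{kp}$ it is exactly Lemma \ref{lem_module_existence} combined with the identification of $\Psi_{k,p,m}$ with $t\cdot C_{kp}$ via \eqref{eq:mult t 1}, and for general $\Phi \in \calR$ it follows by $\omega$-linearity, since both sides are $\omega$-linear in $\Phi$ and the realization map $\iota_M$ and Alesker multiplication by $t_M$ are continuous in the relevant sense. The fact that $t\mapsto t_M$ extends to an algebra homomorphism $\RR[t]\to\calLK(M)$ is automatic because $\calLK(M)$ is by definition the subalgebra generated by $t_M$; compatibility of the higher powers with the $\RR[t]$-action on $\calR$ then follows by induction from the case $i=1$.

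The main obstacle I anticipate is not in this final synthesis — which is genuinely soft once Lemma \ref{lem_module_existence} is in hand — but in verifying that the constants $b_{k,p,j}$ really are universal (independent of $M$ subject only to dimensional truncation), so that evaluation on spheres determines them in all dimensions; this is exactly the content that Lemma \ref{lem_module_existence} is designed to supply, and one must be careful that the spheres $S^{n+1}_\lambda$ for all $n$ and $\lambda$ furnish enough templates to separate the (infinitely many) coefficients in each fixed degree. A secondary point requiring a little care is the bookkeeping for the truncation $C_{kp}^M = 0$ when $k > \dim M$, and the boundary case $k = \dim M$ where the volume term $\psi_p$ rather than $\phi_{kp}$ appears; but since multiplication by $t_M$ raises degree by one and the sphere templates can be taken of arbitrarily large dimension, these truncation effects do not interfere with the determination of the universal constants.
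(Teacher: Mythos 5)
Your proposal is correct and is essentially the paper's own argument: granting Lemma \ref{lem_module_existence}, one pins down the universal element by globalizing on the spheres $S^{n+1}_\lambda$ of varying curvature and invoking Corollary \ref{cor:t acts on tau}, then gets the intertwining statement and \eqref{eq:tMk} by $\omega$-linearity, the module property of $\glob$, and $t\cdot\bar\Lambda_k=\bar\Lambda_{k+1}$. Two small points to tighten: the identification of $t_{S^{n+1}_\lambda}$ with the restricted euclidean $t$ is exactly what Theorem \ref{thm:LK isometric invariance} supplies (the paper cites it for this), and since Lemma \ref{lem_module_existence} only yields some $\Psi_{k,p,m}\in\calR$ rather than an expansion supported on the indices $(k+2j+1,p+j)$, you should start from a general expansion $\sum_{l,q}a_{lq}C_{lq}$ — the sphere computation then forces the support as well as the coefficients, and because in each degree $l\le m+1$ there are only finitely many $q$ with $2q\le l$, varying $\lambda$ separates them all (so the worry you raise about infinitely many coefficients in a fixed degree does not actually arise).
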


\begin{proof}

We must show that 
\begin{equation}\label{eq:t action}
\Psi_{k,p,m} = \sum_{j=0}^\infty \binom{2j}j\left( \frac1{16}\right)^j C_{k+2j+1,p+j}.
\end{equation}

This formula is   determined by  the cases $M= S^{n+1}_\lambda$ for varying $n,\lambda$, as follows.
By Theorem \ref{thm:LK isometric invariance}, the valuation $t\in \calV(S^{n+1}_\lambda)$, described in Section \ref{sect:some facts}, is identical to  $t_{S^{n+1}_\lambda}$ as defined in \eqref{eq:def tM}.
Globalizing $t\cdot C_{kp}$ in $S^n_\lambda$ and using \eqref{eq_Ckp_sphere}, \eqref{eq_def_tau}, and  Corollary~\ref{cor:t acts on tau}, we obtain
$$
\glob(t\cdot C_{kp}^{M})= \lambda^p\, t\cdot \tau_k = \lambda^p \sum_{j=0}^\infty \binom{2j}{j} \left(\frac{\lambda}{16}\right)^j \tau_{k+2j+1}=  \glob\left(\sum_{j=0}^\infty \binom{2j}j\left( \frac1{16}\right)^j C^M_{k+2j+1,p+j}
\right)$$
 as claimed. 
 \end{proof}

\subsubsection{There are no further universal Riemannian valuations}\label{sect:no extras}
Our next result expresses the fact that $\widetilde{\calLK}$
comprises {\it all}  elements of $\calR$
whose globalizations act on $\calR$ in a universal way. 

\begin{theorem} \label{thm:no more} Let $\Phi \in \calR$, and suppose that there exists a map
$L= L_\Phi:\calR \to \calR$ such that for every Riemannian manifold $M$ and every $\Psi \in \calR$
$$
\glob(\Phi^M) \cdot \Psi^M = L(\Psi)^M.
$$
Then $\Phi \in \widetilde{\calLK}$, and $L$ is described by \eqref{eq:t action 1}.
\end{theorem}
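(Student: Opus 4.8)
The plan is to extract structural information about the pair $(\Phi,L)$ from round-sphere templates and then to pin $\Phi$ down using richer templates. The basic tool is the elementary observation that the realizations on the spheres $S^{n+1}_\lambda$ are jointly injective on $\calR$: under $\alpha$ the map $\iota_{S^{n+1}_\lambda}\colon\calR\to\calR(S^{n+1}_\lambda)=\RR[\xi]/(\xi^{n+2})$ is the substitution $\eta\mapsto\lambda\xi^2$ followed by truncation (this is just \eqref{eq_Ckp_sphere}, with $C^{S^{n+1}_\lambda}_{k,0}\leftrightarrow\xi^k$), so if $\Psi^{S^{n+1}_\lambda}=0$ for all $n$ and $\lambda$ then $\Psi=0$. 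Since the hypothesis determines $L$ uniquely via this injectivity (from $L(\Psi)^M=\glob(\Phi^M)\cdot\Psi^M$), $L$ is automatically $\omega$-linear.

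First I would show that $L$ is multiplication by a formal power series. For every $M$ and every $\Psi\in\calR$, Theorem \ref{thm:LK} and commutativity of the Alesker module action give
\[
L(t\cdot\Psi)^M=\glob(\Phi^M)\cdot(t_M\cdot\Psi^M)=t_M\cdot L(\Psi)^M=(t\cdot L(\Psi))^M,
\]
so $L$ is $\RR[t]$-linear; and putting $\Psi=\bar\Lambda_0$ in the hypothesis, globalizing, and using $\glob(\bar\Lambda_0^M)=t_M^0=\chi$ yields $\glob(\Phi^M)=\glob(L(\bar\Lambda_0)^M)$ for all $M$, hence $\Phi=L(\bar\Lambda_0)$. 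On $S^{n+1}_\lambda$ every invariant valuation is a polynomial in $t_M$ (Section \ref{sect:some facts}), so $\glob(\Phi^M)$ acts on $\calR(S^{n+1}_\lambda)$ by multiplication by a truncated power series in $\xi$. Because $\iota_{S^{n+1}_\lambda}$ intertwines multiplication by $\xi^2$ on $\calR=\RR[[\xi,\eta]]$ with multiplication by $\xi^2$ on $\calR(S^{n+1}_\lambda)$, and because $C^{S^{n+1}_\lambda}_{l,q}=\lambda^q C^{S^{n+1}_\lambda}_{l,0}$, joint injectivity forces $L(C_{l+2,0})=\xi^2\cdot L(C_{l,0})$ and, together with those relations, that $L$ commutes with multiplication by $\eta$; since $\xi=s\,(1-\eta/4)^{1/2}$ where $s$ is the $t$-multiplier of \eqref{eq:t action 1}, $\RR[t]$-linearity then also gives that $L$ commutes with multiplication by $\xi$. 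Hence $L$ is multiplication by $m:=\alpha(L(C_{0,0}))$, and $\alpha(\Phi)=(1-\eta/4)^{-1}m$.

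It remains to prove that $m$ lies in $\RR[[s]]$, $s=\xi(1-\eta/4)^{-1/2}$; equivalently $\alpha(\Phi)\in(1-\eta/4)^{-1}\RR[[s]]$, i.e. $\Phi\in\widetilde{\calLK}$ by \eqref{eq:Lambda code}, whereupon $L$ is precisely the action on $\calR$ of the corresponding element of $\RR[[t]]$, that is, the action described by \eqref{eq:t action 1}. This is the crux, and I expect it to be the main obstacle: it cannot be reached from spheres alone, since each $\lambda$ constrains $m$ only along the parabola $\eta=\lambda\xi^2$, which is consistent with $m\notin\RR[[s]]$. The plan is to feed into the hypothesis templates with richer curvature, namely products of space forms $M=\prod_i S^{a_i}_{\lambda_i}$ (hyperbolic factors allowed): on such $M$ the realizations $\iota_M$ involve algebraically independent symmetric functions of the curvatures $\lambda_i$, so that, letting the number of factors grow, the combined realizations become injective on all of $\calR$; meanwhile $\calLK(M)=\RR[t_M]$ is a \emph{proper} subalgebra of the $G$-invariant valuations, and $\calR(M)$ is a module over $\calLK(M)$ but not over all of $\calV^G(M)$ — the latter because the $C^M_{kp}$ are angular (Proposition \ref{prop:C is angular}) and, on the associated graded of $\calC(M)$ for the degree filtration, Alesker multiplication reduces fibrewise to the flat product, under which angularity is preserved only by the Lipschitz--Killing subalgebra. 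Since $\glob(\Phi^M)$ preserves $\calR(M)$ by hypothesis, it must lie in $\calLK(M)=\RR[t_M]$ for every such $M$; injectivity as the $\lambda_i$ vary then forces $\Phi\in\widetilde{\calLK}$, and the stated description of $L$ follows from Theorem \ref{thm:LK}.
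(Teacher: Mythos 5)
Your soft analysis is correct and is in fact a nice repackaging of the problem: joint injectivity of the sphere realizations, the $\RR[t]$-linearity of $L$, the identity $\Phi=L(\bar\Lambda_0)$, and the conclusion that under $\alpha$ the map $L$ is multiplication by $m:=\alpha(L(C_{0,0}))$ with $\alpha(\Phi)=(1-\eta/4)^{-1}m$ all check out (most directly because each $\iota_{S^{n+1}_\lambda}$ is, under $\alpha$, the ring homomorphism $\eta\mapsto\lambda\xi^2$ followed by truncation). You also correctly diagnose that spheres alone cannot show $m\in\RR[[s]]$, and that this is the crux.

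The gap is exactly there. Your mechanism for the crux --- products of space forms, plus the assertion that on the associated graded the Alesker product becomes the flat product ``under which angularity is preserved only by the Lipschitz--Killing subalgebra'' --- assumes precisely the hard theorem that has to be proved. No such general principle is known: in the flat $SO(n)$-invariant setting the statement is vacuous (all invariant curvature measures are angular and all invariant valuations are Lipschitz--Killing), and in the flat hermitian setting it is a genuinely difficult computation in $\Val^{U(n)}$ (Theorem~2 of \cite{bfs2}, quoted here as Proposition~\ref{prop:hermitian lemma}(4)), which moreover only holds up to terms of weighted degree $\ge n-2$, forcing a limit $n\to\infty$; any ``fibrewise flat'' reduction would have to control exactly such boundary terms. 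For your templates $\prod_i S^{a_i}_{\lambda_i}$ the situation is worse: these spaces are not isotropic, the structure of $\calV^G(M)$ is not under control, $\calR(M)$ is a proper subspace of the invariant angular curvature measures (mixed products of Federer-type measures of the factors are angular but not Riemannian), and no analogue of the angularity theorem is available --- so the inference ``$\glob(\Phi^M)$ stabilizes $\calR(M)$ $\Rightarrow$ $\glob(\Phi^M)\in\RR[t_M]$'' is unsupported. The paper closes this gap by choosing $M=\CC P^n_\lambda$, where (i) globalization is injective on invariant angular curvature measures, (ii) $\calR(\CC P^n_\lambda)$ is exactly the space of invariant angular curvature measures, and (iii) the hermitian angularity theorem pins down which $p(t_\lambda,s)$ stabilize that space; letting $n\to\infty$ then yields $\Phi\in\widetilde{\calLK}$, after which Theorem~\ref{thm:LK} gives that $L$ is as in \eqref{eq:t action 1}. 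To complete your argument you would either import these same hermitian inputs (making the product templates unnecessary) or prove a new angularity-type theorem for your templates, which is a substantial undertaking rather than a soft step.
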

 In order to prove this we examine the particular cases $M=\CC P^n_\lambda$, the complex space form of constant holomorphic sectional curvature $4\lambda$, and exploit the detailed understanding of the integral geometry of such spaces achieved in \cites{ bfs, bfs2}. 
The needed  facts
 from hermitian integral geometry are given in the next Proposition. For simplicity we denote the realizations $C_{kp}^{\CC P^n_\lambda}\in \calR(\CC P^n_\lambda)$  simply by $C^\lambda_{kp}$. Put $\calV^n_\lambda$ for the algebra of isometry-invariant valuations on $\CC P^n_\lambda$. Following \cite{bfs}, we denote  the canonical generator of the Lipschitz-Killing algebra by $t_\lambda:= t_{ \CC P^n_\lambda} \in \calV^n_\lambda$, and take $s\in \calV^n_\lambda$  to be the valuation 
 $$
 s(Q) := \int_{\overline{Gr}_{n-1}} \chi (Q \cap P)\, dP
 $$
 where $\overline{Gr}_{n-1}$ is the homogeneous space of all totally geodesic subspaces of $\CC P^n_\lambda$ of complex codimension 1, and $dP$ is the (appropriately normalized) Haar measure (cf. \cite{bfs}, (3.1)).

\begin{proposition} \label{prop:hermitian lemma}
{} \
\begin{enumerate} 
\item\label{item:val = glob ang} The restriction of the globalization map to the space of invariant angular curvature measures gives a linear isomorphism with $\calV^n_\lambda$.
 \item\label {lem_CP_lin_indep} If  $\lambda\neq 0$ and $0\le k\leq n$, then
the curvature measures
$$C^\lambda_{k,0}, C^\lambda_{k,1},\ldots, C^\lambda_{k,\lfloor k/2\rfloor}$$
are linearly independent.
\item\label{item:span} $\calR(\CC P^n_\lambda)$ coincides with the space of all invariant angular curvature measures on $\CC P^n_\lambda$.
 \item \label{item:angularity}  
 Let $p(\tau,\sigma)$ be a polynomial in the formal variables $\tau,\sigma$, where $\deg \tau = 1, \deg \sigma = 2$. 
 If multiplication by $p(t_\lambda,s) \in \calV^n_\lambda$ stabilizes the space of invariant angular curvature measures on $\CC P^n_\lambda$ then
 the terms of $p(\tau,\sigma)$ of weighted degree less than $n-2$ are independent of $\sigma$.
 \end{enumerate}
\end{proposition}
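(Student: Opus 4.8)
The plan is to reduce parts \eqref{item:val = glob ang}--\eqref{item:span} to the hermitian integral geometry of \cites{bfs, bfs2} and to give a self-contained argument for part \eqref{item:angularity}, which is where the real content lies. For \eqref{item:val = glob ang}: the key observation is that the globalization of an invariant angular curvature measure of degree $k$ is an even valuation whose restriction to totally geodesic $k$-flats (its ``Klain function'') equals the angle function $c_\Psi$; since an angular curvature measure is by definition determined by $c_\Psi$, injectivity of $\glob$ on the space of invariant angular curvature measures follows from the Klain-type uniqueness theorem for complex space forms, and surjectivity onto $\calV^n_\lambda$ follows by a dimension count against the explicit bases of the invariant curvature measures and valuations computed in \cite{bfs}. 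For \eqref{lem_CP_lin_indep}: extracting the angle function of $C^\lambda_{kp}$ from \eqref{def_hatPhi} exactly as in the proof of Proposition \ref{prop:C is angular}, and using that on $\CC P^n_\lambda$ the curvature forms $\Omega_{ij}$ are $\lambda$ times a fixed universal quadratic expression in the $\theta_k$ and the complex structure, one finds that $c_{C^\lambda_{kp}}$ is a polynomial of weighted degree exactly $p$ in the Kähler-angle invariants of the $k$-plane; for $k\le n$ there are enough such invariants for the functions $c_{C^\lambda_{k,0}},\dots,c_{C^\lambda_{k,\lfloor k/2\rfloor}}$ to be linearly independent on the Grassmannian. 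For \eqref{item:span}: ``$\subseteq$'' is Proposition \ref{prop:C is angular}; ``$\supseteq$'' follows in degrees $k\le n$ from \eqref{lem_CP_lin_indep} and \eqref{item:val = glob ang} by a dimension count against $\calV^n_\lambda$, and in degrees $k>n$ by reproductivity, since a totally geodesic complex embedding $\CC P^n_\lambda\hookrightarrow\CC P^N_\lambda$ preserves the $C_{kp}$ (Corollary \ref{cor:totally geodesic}) and, for $N$ large, degree $k$ lies in the stable range, where $\calR(\CC P^N_\lambda)$ already fills out the invariant angular curvature measures.

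For part \eqref{item:angularity}, set $\calA:=\calR(\CC P^n_\lambda)$, which by \eqref{item:span} is the space of invariant angular curvature measures, and expand $p(\tau,\sigma)=\sum_{k,q}a_{kq}\tau^k\sigma^q$. Since $t_M\cdot\calR(M)\subseteq\calR(M)$ for every Riemannian $M$ (Lemma \ref{lem_module_existence} and Theorem \ref{thm:LK}), multiplication by $t_\lambda$ already stabilizes $\calA$, so any failure must be caused by the $\sigma$-divisible part of $p$. I would argue by contradiction: suppose $p$ carries a $\sigma$-divisible term of weighted degree $<n-2$ and let $m_0$ be the least weighted degree carrying such a term, so $m_0<n-2$ and every term of $p$ of weighted degree $<m_0$ is $\sigma$-free.

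Now apply $p(t_\lambda,s)$ to $C^\lambda_{0,0}\in\calA$. Because multiplication by $t_\lambda^k s^q$ is homogeneous of degree $k+2q$ and $C^\lambda_{0,0}$ has pure degree $0$, the degree-$m_0$ part of the product equals $\sum_{k+2q=m_0}a_{kq}\,t_\lambda^k s^q\cdot C^\lambda_{0,0}$, and it is angular since $p(t_\lambda,s)$ stabilizes $\calA$ by hypothesis. The summand with $q=0$ is $a_{m_0,0}\,t_\lambda^{m_0}\cdot C^\lambda_{0,0}$, which is angular by Lemma \ref{lem_module_existence}; subtracting it, the curvature measure $\sum_{q=1}^{\lfloor m_0/2\rfloor}a_{m_0-2q,\,q}\,t_\lambda^{\,m_0-2q}\,s^q\cdot C^\lambda_{0,0}$ is angular as well. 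Since $m_0+2\le n$, all curvature measures appearing here lie in the stable range, so they coincide with the corresponding stable curvature measures on $\CC P^\infty$, and the explicit multiplication formulas of \cite{bfs} apply. From those formulas one reads off that $t_\lambda^{\,m_0-2q}\,s^q\cdot C^\lambda_{0,0}$, $q=1,\dots,\lfloor m_0/2\rfloor$, are linearly independent modulo the angular curvature measures of degree $m_0$ — and this is exactly where the bound $n-2$ is needed. Hence all the coefficients $a_{m_0-2q,\,q}$ with $q\ge1$ vanish, contradicting the choice of $m_0$. Therefore $p$ has no $\sigma$-divisible term of weighted degree $<n-2$.

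The step I expect to be the main obstacle is the last linear-independence assertion: once \eqref{item:val = glob ang}--\eqref{item:span} are available the rest of the argument for \eqref{item:angularity} is formal, but extracting the non-angular component of multiplication by $s$ on the curvature measures of $\CC P^n_\lambda$ in the stable range, and pinning down the exact degree threshold that produces the bound $n-2$, requires a careful passage through the hermitian calculus of \cites{bfs, bfs2}. I would organize this around their structure constants, equivalently around the transforms $\calL,\calM$ of Theorem \ref{thm:t action}, which make the relevant multiplications explicit.
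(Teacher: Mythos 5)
Your handling of items (1)--(3) is broadly in line with what the paper does: the paper simply quotes Lemma 3.9 and Lemma 3.6 of \cite{bfs} together with Corollary \ref{cor:surjective} (which is what actually delivers (2), via the triangular change of basis between the $C^\lambda_{k,\cdot}$ and the $\tilde\Delta_{k,\cdot}$), and your K\"ahler-angle argument for (2) is a plausible substitute, though you never exhibit the nonvanishing leading coefficient. The genuine gap is in item (4). First, the pivotal assertion that multiplication by $t_\lambda^k s^q$ is homogeneous of degree $k+2q$ on curvature measures is false for $\lambda\neq 0$: already \eqref{eq:mult t 1} (equivalently \eqref{eq:t even}, \eqref{eq:t odd}) shows that $t_\lambda\cdot C^\lambda_{kp}=\sum_j\binom{2j}{j}16^{-j}C^\lambda_{k+2j+1,p+j}$ has nonzero components in degrees $k+1,k+3,\dots$; the action only respects the filtration, with the associated graded action being the flat one. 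This step can be patched (since every term of $p$ of weighted degree $<m_0$ is $\sigma$-free by your choice of $m_0$, its product with $C^\lambda_{00}$ lies in $\calR(\CC P^n_\lambda)=\ang^n$, which is graded, so all of its degree-$m_0$ contributions are angular and can be discarded, leaving modulo angular the flat product $\sum_{q\ge1}a_{m_0-2q,q}\,t^{m_0-2q}s^q\cdot\Delta_{00}$), but as written the step is incorrect and you do not supply the filtration argument.

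Second, and more seriously, your proof of (4) then rests entirely on the claim that the non-angular components of the flat products $t^{m_0-2q}s^q\cdot\Delta_{00}$, $q\ge1$, are linearly independent whenever $m_0<n-2$; you leave this unproved (``one reads off from those formulas'') and yourself flag it as the main obstacle. That claim is precisely the hard content of (4), and it is not contained in the transforms $\calL,\calM$ of Theorem \ref{thm:t action}, which only describe multiplication by $t_\lambda$, i.e.\ the Lipschitz--Killing part. It is also not clear that testing against the single element $\Delta_{00}$ can produce the sharp threshold $n-2$ at all. The paper proceeds quite differently: it invokes Theorem 2 of \cite{bfs2}, which characterizes the angularity-preserving valuations as those $p(t_\lambda,s)$ satisfying $ts(4s-t^2)\,\frac{\partial p}{\partial \sigma}\left(\frac{t}{\sqrt{1-\lambda s}},s\right)=0$ in $\Val^{U(n)}$, and then observes that the lowest-degree term of this expression is $t^{i+1}s^{j}(4s-t^2)$, of weighted degree $i+2j+3$, which can lie in the ideal $(f_{n+1},f_{n+2})$ only if $i+2j+3\ge n+1$, i.e.\ $i+2j\ge n-2$. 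Without either citing that result or actually computing the non-angular parts of the products $s^qt^{m_0-2q}\cdot\Delta_{00}$ from the structure constants of \cite{bfs} and verifying the independence and the degree threshold, your argument for (4) is incomplete.
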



\begin{proof} 
Conclusion \eqref{item:val = glob ang} follows from Lemma 3.9 of \cite{bfs}.
Corollary \ref{cor:surjective}, proved below, yields  \eqref{lem_CP_lin_indep}. Therefore the span of the $C_{kp}$ coincides with that of the $\Delta_{kp}$ (defined in Section \ref{sect:c space forms} below) for $k$ lying in this range. 
Examining the standard embedding of  $\CC P^n_\lambda$ into $\CC P^N_\lambda, N>>n$, conclusion \eqref{item:span} now follows from Lemma 3.6 of \cite{bfs} and Corollary \ref{cor:totally geodesic} above.

To prove \eqref{item:angularity}, recall that the algebra $\Val^{U(n)}$ is the quotient $\RR[s,t]/(f_{n+1},f_{n+2})$, where each $f_i(s,t)$ is a certain polynomial of weighted degree $i$, where $\deg s=2, \deg t =1$. We apply Theorem 2 of \cite{bfs 2}, which states that the algebra of isometry invariant valuations on $\CC P^n_\lambda$ whose action preserves 
the space of angular invariant curvature measures is given by all $p(t_\lambda,s)$ such that 
\begin{equation}\label{eq:test}
ts(4s-t^2)\; \frac{\partial p}{\partial \sigma} \left( \frac{t}{\sqrt{1-\lambda s}} ,s \right) =0 \qquad \text{in}\ \Val^{U(n)}.
\end{equation}

Clearly we may assume that $p(\tau,\sigma)$ is divisible by $\sigma$. 
Let $\tau^i\sigma^j$ be the term of smallest degree of $p(\tau,\sigma)$ (we may assume that the coefficient is unity). 
It follows that  the term of lowest degree of the left hand expression in \eqref{eq:test} is $ts(4s-t^2)t^{i}s^{j-1}$. 
The ideal of relations between $s,t$ in $\Val^{U(n)}$ is generated by homogeneous polynomials in degrees $n+1,n+2$, 
 so the presence of this last monomial implies that $i+2j \ge n-2$.
\end{proof}

\begin{proof}[Proof of Theorem \ref{thm:no more}] Suppose $\Phi\in \calR \setminus \widetilde{\calLK}$ . Take $N$ large enough that this is still true modulo the $\omega$-span of the $C_{kp}, k> N$. By Proposition
\ref{prop:hermitian lemma}, \eqref{item:val = glob ang} and
\eqref{lem_CP_lin_indep},
for sufficiently large $n\ge N$ the globalization $\glob(\Phi^{\CC P^n_\lambda})\notin \calLK(\CC P^n_\lambda)$. Choose a polynomial $p(\tau,\sigma)$ such that 
$p(t_\lambda, s)=\glob(\Phi^{\CC P^n_\lambda})$ in $\calV_\lambda^n$.  By  Proposition~\ref{prop:hermitian lemma}, 
items \eqref{item:angularity} and \eqref{lem_CP_lin_indep},  it follows that $\Phi\in \widetilde{\calLK}$
modulo the $\omega$-span of the $C_{kp}$,  $k\geq n-2$. This is a contradiction for $n\geq N$ sufficiently large.
\end{proof}

\begin{remark} With some loss of precision, conclusions \ref {lem_CP_lin_indep} and \ref{item:span}  above may be paraphrased as: if $\lambda \ne 0$ then the realization map from $\calR$  to $\calC(\CC P^\infty_\lambda)$ gives a linear isomorphism with the invariant angular elements. We will exploit this situation further in Section \ref{sect:correspondence} below.
\end{remark}


\section{Proofs of Lemmas \ref{lem:exist iso} and \ref{lem_module_existence}}\label{sect:lemmas}

\subsection{Proof of Lemma \ref{lem_module_existence}} \label{Section_existence_of_module}

\subsubsection{The Alesker-Bernig formula}\label{sect:ab formula}
 We use the description provided in \cites{alesker_bernig12,fu15} of the Alesker product in terms of differential forms. Given a smooth oriented Riemannian manifold $M^{m+1}$, denote by $\Sigma$ the oriented $(3m+2)$-dimensional manifold 
\begin{equation}\label{eq:def sigma}
\Sigma=  \left\{ (\xi,\zeta,\eta)\in SM\times_M SM\times_M SM\colon  \ \xi\neq \pm \eta,\ \zeta\in \overline{\xi,\eta}\right\}
\end{equation}
where $\overline{\xi,\eta}$ denotes the open geodesic segment in $S_{\pi\xi} M$ joining $\xi,\eta$. We have three associated projections
$$\xi,\zeta,\eta \colon SM\times_M SM\times_M SM \to SM$$
to the first, second and third factors respectively. 

We regard the projection $\xi:SM\times_M SM\times_M SM\to SM$ as a bundle  associated to the principal $O(m)$ bundle $z:FM \to SM$. Here  $O(m)$ acts diagonally on the model fiber $S^m \times S^m \subset \RR^{m+1}\times \RR^{m+1}$ of $\xi$ via the following action on $\RR^{m+1}$: let $u_0,\dots,u_m$ be standard coordinates for $\RR^{m+1}$, and let $O(m)$ fix the $u_0$ coordinate and act as usual on $(u_1,\dots,u_m)$. Thus the fibers $\Sigma_\xi$ correspond to the $O(m)$-invariant subset 
$$
C:=\left\{ { (v,u)}: u\neq \pm e_0,\ v\in \overline{e_0,u}\right\}\subset S^m\times S^m.
$$
We endow the bundle $\xi$ with the  connection induced by the canonical connection on the principal bundle $z$, described in Section \ref{sect:infinitesimal parallel}.

\begin{theorem}[\cites{alesker_bernig12, fu15}]\label{thm:def mult}
 If $\mu \in \calV^\infty(M)$, $\beta\in \Omega^{m}(SM)$, and $\gamma\in\Omega^{m+1}(M)$, then 
 $$\mu\cdot [\beta, \gamma]= [\theta, \psi]$$
with
\begin{equation}\label{eq_ab1}\theta=(-1)^{m+1}  \xi_*  ( \zeta^*\beta \wedge \eta^* a^* \Delta_\mu) + \pi^* \calF_\mu \cdot \beta,\end{equation}
\begin{equation}\label{eq_ab2} \psi = \pi_*(\beta \wedge a^* \Delta_\mu) + \calF_\mu \cdot \gamma.\end{equation} 
Here
$$\calF\colon \calV^\infty(M)\to C^\infty(M), \qquad \Delta: \calV^\infty(M)\to \Omega^{m+1}(SM)$$
are defined by $\calF_\mu(x)=\mu(\{x\})$ and $\Delta_\mu= D\rho + \pi^*\tau$, provided $\mu=[[\rho, \tau]]$. 
\end{theorem}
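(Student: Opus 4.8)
The plan is to reduce to valuations $\mu$ of the special form appearing in Theorem~\ref{thm:product}(2) and then to compute directly. By Alesker's results on smooth valuations, finite linear combinations of valuations $\nu_X(A):=\int_P\chi(\varphi_p(X)\cap A)\,dp$ --- with $X$ a compact simple smooth polyhedron and $\{\varphi_p\}$ a submersive smooth family of diffeomorphisms as in Theorem~\ref{thm:product}(2) --- form a dense subspace of $\calV^\infty(M)$. Both sides of the asserted identity are linear and continuous in $\mu$: the left side because Alesker multiplication $\calV^\infty(M)\times\calC(M)\to\calC(M)$ is continuous, and the right side because $\mu\mapsto\calF_\mu$ and $\mu\mapsto\Delta_\mu$ are continuous (the latter since $D\rho+\pi^*\tau$ is independent of the choice of representing pair $(\rho,\tau)$ and varies continuously with $\mu$). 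Hence it is enough to verify the formula for $\mu=\nu_X$. For such $\mu$, Theorem~\ref{thm:product}(2) gives $(\mu\cdot[\beta,\gamma])(A,E)=\int_P[\beta,\gamma](\varphi_p(X)\cap A,E)\,dp$, and by \eqref{eq:def curv} the task becomes to analyze, for almost every $p$ (which suffices for the integral), the set $Q:=\varphi_p(X)$, assumed in general position with $A$, together with its normal cycle $\nc_M(Q\cap A)$, and then to average over $p$.

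The geometric core is the intersection formula for normal cycles of transverse intersections, which must be established in the Riemannian setting rather than in $\RR^N$. For $Q,A$ in general position, $\nc_M(Q\cap A)$ splits into three pieces: over the interior of $Q$ it coincides with $\nc_M(A)$; over the interior of $A$ it coincides with $\nc_M(Q)$; and over the transverse locus $\partial Q\cap\partial A$ it is a \emph{fibered join} --- the image in $SM$, under the map $(\xi,\zeta,\eta)\mapsto\zeta$, of the natural $m$-dimensional current carried by the incidence manifold $\Sigma$ of \eqref{eq:def sigma} whose $\xi$-slot is filled by $\nc_M(A)$, whose $\eta$-slot is filled by $\nc_M(Q)$, and whose middle coordinate $\zeta$ sweeps the geodesic arc $\overline{\xi,\eta}$ in the relevant sphere fiber. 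This is exactly where the geodesic segments in the fibers, hence $\Sigma$, enter; in $\RR^N$ these arcs would be rectilinear. Substituting into the value $[\beta,\gamma](Q\cap A,E)$ and isolating the contributions that do not involve $\nc_M(Q)$, one finds that the term $\int_{Q\cap A\cap E}\gamma$ together with the ``interior of $Q$'' piece of $\nc_M(Q\cap A)$ paired with $\beta$ produce, after averaging (using $\int_P\mathbf 1[x\in\varphi_p(X)]\,dp=\calF_\mu(x)$), precisely the curvature measure $\calF_\mu\cdot[\beta,\gamma]=[\pi^*\calF_\mu\cdot\beta,\ \calF_\mu\cdot\gamma]$ --- that is, the summand $\pi^*\calF_\mu\cdot\beta$ of $\theta$ and the summand $\calF_\mu\cdot\gamma$ of $\psi$.

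It remains to average the two pieces that do involve $\nc_M(\varphi_p(X))$, and here the decisive input is that the moving average $\int_P\nc_M(\varphi_p(X))\,dp$ is, up to a sign and the fiberwise antipodal map $a$, the $m$-dimensional current on $SM$ represented by the form $\Delta_\mu=D\rho+\pi^*\tau$; equivalently, pairing a form $\omega\in\Omega^m(SM)$ against it gives $\pm\int_{SM}\omega\wedge a^*\Delta_\mu$. This says that the moving average of the outward normal cycles of the sets sweeping out $\mu$ reconstructs $\mu$ through its canonical form, the antipodal map recording the mismatch between the outward-normal convention of normal cycles and the convention built into the pair $(\rho,\tau)$. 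Feeding this into the ``interior of $A$'' piece of $\nc_M(\varphi_p(X)\cap A)$, paired with $\beta$ and pushed forward by $\pi$, yields a measure of the form $\int_{A\cap E}\pi_*(\beta\wedge a^*\Delta_\mu)$, hence the summand $\pi_*(\beta\wedge a^*\Delta_\mu)$ of $\psi$; feeding it into the fibered-join piece, and rewriting the outcome --- via a projection-formula argument over the fibers of $\xi\colon\Sigma\to SM$ --- as the pairing of $\nc_M(A)$ against $\xi_*(\zeta^*\beta\wedge\eta^*a^*\Delta_\mu)$, yields the summand $(-1)^{m+1}\xi_*(\zeta^*\beta\wedge\eta^*a^*\Delta_\mu)$ of $\theta$. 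I expect the main obstacle to be exactly this averaging identity together with the accompanying sign and orientation bookkeeping: one must fix compatible orientations of $SM$, of the normal cycles, and of the fibers of $\xi$, justify the fibered-join description of $\nc_M(Q\cap A)$ as a slicing of currents (with the correct multiplicities along corners of $Q$ and $A$), and check that commuting $\zeta^*\beta$ past the vertical directions integrated out by $\xi_*$, together with the action of $a$, contributes exactly the factor $(-1)^{m+1}$. Granting these, the identity follows by assembling the four contributions.
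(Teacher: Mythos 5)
A preliminary point of comparison: the paper does not prove Theorem \ref{thm:def mult} at all --- it is imported from \cite{alesker_bernig12} and \cite{fu15}, with only the sign in front of the first term of \eqref{eq_ab1} corrected. Measured against those sources, your proposal essentially reconstructs the route of \cite{fu15}: represent $\mu$ integral-geometrically as in Theorem \ref{thm:product}(2), decompose $\nc_M(\varphi_p(X)\cap A)$ via the transverse-intersection (fibered join) formula over the incidence manifold $\Sigma$ of \eqref{eq:def sigma}, and identify the averaged normal cycle $\int_P \nc_M(\varphi_p X)\,dp$ with the current represented by $\pm a^*\Delta_\mu$. So the strategy is the right one and matches the cited literature.

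As a proof, however, the proposal leaves gaps that are the substance of the theorem rather than bookkeeping. (i) The reduction asserts that finite linear combinations of the $\nu_X$ are dense in $\calV^\infty(M)$ and that both sides are continuous in $\mu$; neither is off-the-shelf in the form you use it. Well-definedness and continuity of $\mu\mapsto\Delta_\mu$ rest on the characterization of the kernel of $(\rho,\tau)\mapsto[[\rho,\tau]]$ (Bernig--Br\"ocker \cite{bb}) plus an open-mapping argument, and density of the $\nu_X$ on a general manifold needs a precise citation or proof. (ii) The fibered-join description of $\nc_M(Q\cap A)$ for transverse $Q,A$ in a Riemannian manifold, with correct multiplicities along corners, is itself a hard theorem of comparable depth to the statement being proved; it cannot simply be invoked. (iii) The decisive averaging identity --- that $\int_P\nc_M(\varphi_p X)\,dp$ is represented by $\pm a^*\Delta_{\nu_X}$ --- is exactly where the Rumin differential and the Legendrian property of normal cycles must enter, and you explicitly defer it together with the orientation and sign analysis; note that the sign $(-1)^{m+1}$ is precisely what the present paper had to correct in \cite{fu15}, so this is not a detail that can be waved through. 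In sum, the proposal is an accurate roadmap of the known proof, but not a proof; to complete it you would need to supply (i)--(iii) or replace them by explicit references to where they are established.
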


The sign in front of the first term of \eqref{eq_ab1} corrects the corresponding formula in \cite{fu15}. Here the operator $D\rho$ is the {\it Rumin differential} of $\rho$ \cites{rumin, bb}, defined as the unique $(m+1)$-form $D \rho = d(\rho + \theta_0 \wedge \delta)$ such that
$$
\theta_0 \wedge d(\rho + \theta_0 \wedge \delta)  = 0
$$
for some $(m-1)$-form $\delta$, where $\theta_0 \in \Omega^1(SM)$ is the contact form. Recall that $a\colon SM\to SM$, $a(\xi)=-\xi$ denotes the fiberwise antipodal map.
The fibers {$\Sigma_\xi$} may be compactified so as to make clear the convergence of the fiber integrals. The orientations of the fibers in these integrals are canonical; we need not make them precise here since we only wish to confirm the existence of the universal product formula.
Actually the term $\pi^*\tau$ may be ignored, for trivial dimensional reasons: the integration is of forms of degree $2m+1$ over fibers of dimension $m+1$, so that any terms including a horizontal factor of degree $m+1$ must yield zero. By the same token $\Delta_\mu$ may be perturbed by any horizontal form of degree $m+1$ without affecting the result.

Define
\begin{align}\label{LK_form} 
\bar\calA^+_m \owns \lambda_k&:= \sum_{j=0}^{\lfloor\frac{m-k}{2}\rfloor}   \binom{ \frac k 2 +j}{j}  \left(\frac{1}4\right)^{j}  \frac{\omega_{k+2j}}{\pi^{k+2j}(m-k-2j+1)! \omega_{m-k-2j+1}}\phi_{k+2j,j}  \\
\notag
 &= { \frac{\, \omega_k}{\pi^k \, (m-k+1)!!\, \omega_{m-k+1}} \sum_{j=0}^{\lfloor\frac{m-k}{2}\rfloor}    \frac{1}{2^j j! (m-k-2j)!! }\phi_{k+2j,j}}. \notag
\end{align}
Thus $\bar \rho(\lambda_k)$ is the boundary term of $\bar \Lambda_k \in \calR$, as in Definition \ref{def R}.
Here we use the relation
$$
k\omega_k = 2\pi\omega_{k-2}, \quad k=1,2,\ldots.
$$
For nonnegative integers $k,p$ satisfying $2p\leq k\leq m-1$ we define elements of  degree $m+1$ by
$$
\bar\calA^+_m \owns D_{kp} := (-1)^k \sum_{\pi} \sgn(\pi) \Omega_{\pi_1 \pi_2} \cdots  \Omega_{\pi_{2p-1} \pi_{2p}}  \theta_{\pi_{2p+1}} \cdots  \theta_{\pi_k} \Omega_{\pi_{k+1},0}\omega_{\pi_{k+2},0} \cdots   \omega_{\pi_{m},0} .
$$
The sum ranges over all permutations of $1,\ldots,m$.

The proof of Lemma~\ref{lem_module_existence} using the Alesker-Bernig formula is based on the following observation. 

\begin{lemma}\label{lem:formal d lambda 1} The  formal exterior derivative $d\lambda_1$ is the sum of a multiple of  $\theta_0$ and an element of $\calB_m$.
\end{lemma}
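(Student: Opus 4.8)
The plan is to compute the formal exterior derivative of $\lambda_1$ term by term, working inside $\bar\calA_m$, and to keep track of which monomials of the answer contain a connection form $\omega_{0i}$. The differential $d$ on $\bar\calA_m$ is obtained by pushing the structure equations \eqref{structure_equations} through the quotient that annihilates every $\omega_{ij}$ with $0\notin\{i,j\}$, so for $i,j\ge 1$ one has
\begin{equation*}
d\theta_i=-\omega_{i0}\wedge\theta_0,\qquad d\omega_{i0}=\Omega_{i0},\qquad d\Omega_{ij}=\Omega_{i0}\wedge\omega_{0j}-\omega_{i0}\wedge\Omega_{0j}.
\end{equation*}
First I would apply the Leibniz rule to the antisymmetrized product $\phi_{kp}$ of \eqref{def_hatPhi} and sort the resulting monomials according to which factor was differentiated. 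Differentiating one of the $\theta$-factors contributes a monomial divisible by $\theta_0$. Differentiating one of the $\omega_{\cdot0}$-factors replaces it by an $\Omega_{\cdot0}$ and, once the permutation sum is reassembled, gives a universal integer multiple of the element $D_{kp}$ defined just above the statement of the lemma. Differentiating one of the $\Omega$-factors --- possible only when $p\ge1$ --- gives, by the third equation above, a universal integer multiple of $D_{k-2,p-1}$, and every monomial in this family still carries exactly one factor $\omega_{0\cdot}$. Thus, modulo the ideal generated by $\theta_0$,
\begin{equation*}
d\phi_{kp}\equiv a_{kp}\,D_{kp}+b_{kp}\,D_{k-2,p-1},
\end{equation*}
with $a_{kp},b_{kp}$ constants depending only on $k,p,m$ and $b_{kp}=0$ when $p=0$.

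Next I would assemble $d\lambda_1=\sum_j c_j\,d\phi_{1+2j,j}$ with $c_j$ the coefficient of $\phi_{1+2j,j}$ in \eqref{LK_form}. Every $D$ that occurs has the shape $D_{1+2a,a}$, and its coefficient in $d\lambda_1$ modulo $(\theta_0)$ equals $c_a\,a_{1+2a,a}+c_{a+1}\,b_{3+2a,a+1}$, the second summand being dropped when $\phi_{3+2a,a+1}$ falls outside the range of the sum \eqref{LK_form}. A short computation from \eqref{LK_form} together with the identity $k\omega_k=2\pi\omega_{k-2}$ gives $c_{a+1}/c_a=(m-1-2a)/(2(a+1))$, while evaluating $a_{kp},b_{kp}$ from the first step shows $a_{1+2a,a}/b_{3+2a,a+1}=-c_{a+1}/c_a$; hence the coefficient of $D_{1+2a,a}$ vanishes whenever a partner term is present. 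A partner is missing only at the top of the sum \eqref{LK_form}: if $m$ is odd the top generator $\phi_{m,\cdot}$ has no $\omega_{\cdot0}$-factor, so no $D_{m,\cdot}$ occurs, the largest surviving $D$ is $D_{m-2,\cdot}$ --- which does have a partner, namely the $\Omega$-derivative of $\phi_{m,\cdot}$ --- and its coefficient vanishes, so $d\lambda_1\in(\theta_0)$; if $m$ is even the only $D$ without a partner is $D_{m-1,\cdot}$, which has $m-(m-1)-1=0$ factors $\omega_{\cdot0}$ and so is a product of $\theta$'s and $\Omega$'s alone.

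Putting this together, once the part of $d\lambda_1$ lying in $(\theta_0)$ is split off, every surviving monomial involves only the $\theta_i$ and $\Omega_{ij}$ and therefore represents an element of $\calB_m$, which is the assertion. The main difficulty is the bookkeeping in the first step: pinning down the signs and the integer multiplicities $a_{kp},b_{kp}$ that arise when Leibniz is applied to the antisymmetrized sums \eqref{def_hatPhi}, and checking that the ratio $a_{1+2a,a}/b_{3+2a,a+1}$ is exactly the ratio of consecutive normalizing constants built into $\lambda_1$ in \eqref{LK_form}. Everything is elementary, but the orientations and the combinatorial factor from relabelling the permutation index require care.
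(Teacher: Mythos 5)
Your proposal is correct and follows essentially the same route as the paper: compute $d\phi_{kp}$ in $\bar\calA_m$ from the structure equations, obtaining (mod $\theta_0$) a combination of $D_{kp}$ and $D_{k-2,p-1}$ with coefficients $m-k$ and $-2p$, and then observe that in the sum defining $\lambda_1$ these contributions telescope, leaving at most a multiple of $D_{m-1,\cdot}$, which contains no connection forms and hence lies in $\calB_m$. Your ratio check $a_{1+2a,a}/b_{3+2a,a+1}=-c_{a+1}/c_a$ matches the paper's identity $d\phi_{kp}=(k-2p)\theta_0\wedge\phi_{k-1,p}-2pD_{k-2,p-1}+(m-k)D_{kp}$, so the argument is sound.
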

\begin{proof} More precisely and generally, we compute
\label{eq:variation Lambda}
 \begin{align} d\lambda_k = &\frac{ \omega_k}{\pi^k \, (m-k-1)!!\, \omega_{m-k+1}} \times\\ 
\notag &\times  \left[\theta_0 \wedge\sum_{j=0}^{\lfloor\frac{m-k}{2}\rfloor }    \frac{k}{2^j j! (m-k-2j)!! }\phi_{k+2j-1,j}  \notag
+ \delta_{m-1}\right]
\end{align}
where 
$$\delta_{m-1}:= \frac{1}{2^ \frac{m-k-1}{2} \left( \frac{m-k-1}{2}\right)!} D_{m-1, \frac{m-k-1}{2}}$$ 
for $m-k$ odd and $\delta_{m-1}:=0$ otherwise. This follows by telescoping from the formula
\begin{equation*}\label{dPhi_horizontal}  d\phi_{kp} { -} (k-2p) \theta_0\wedge\phi_{k-1,p}   + 2p D_{k-2,p-1} - (m-k)D_{kp} = 0
\end{equation*}
since $\omega_{ij} = 0$ whenever both $i,j \ge 0$, which in turn follows from the structure equations \eqref{structure_equations}.
\end{proof}

\begin{proposition}\label{prop:cartan D} For any smooth oriented manifold $M$ of dimension $m+1$, we have $D(\bar \rho(\lambda_k)) \equiv \bar \rho (d\lambda_k)$ modulo the space of horizontal forms.
\end{proposition}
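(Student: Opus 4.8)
The plan is to relate the Rumin differential $D$ of the realized form $\bar\rho(\lambda_k)$ to the formal construction, using the fact that the Rumin differential is characterized by a \emph{uniqueness} property: $D\alpha$ is the unique $(m+1)$-form congruent to $d\alpha$ modulo the contact ideal that is itself congruent to zero modulo the contact form $\theta_0$. So the strategy is to exhibit a formal antidote. By Lemma~\ref{lem:formal d lambda 1} (and its more precise version displayed in its proof), we know $d\lambda_k = \theta_0 \wedge (\text{something}) + \delta_{m-1}$, where $\delta_{m-1} \in \calB_m$ in the notation of the excerpt — i.e. $\delta_{m-1}$ (when nonzero) involves one $\Omega_{\pi_{k+1},0}$ and no remaining $\omega_{\pi_j,0}$ with $j>k+1$, hence becomes horizontal-free in the appropriate sense; more to the point $\bar\rho(\delta_{m-1})$ contains no factor of $\theta_0$ after realization. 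Actually the key point is the $\calB_m$-component: $\bar\rho$ maps $\calB_m$ into $\Omega^*(M)$ pulled back to $SM$, i.e. into horizontal forms, and $\bar\rho(\theta_0) = \alpha$ is the contact form. So $d(\bar\rho(\lambda_k)) = \bar\rho(d\lambda_k)$ (since $\bar\rho$ intertwines $d$ and $d$ by the Cartan structure equations, modulo the caveat that ${d'}^2 \ne 0$ — but this does not affect a single application of $d$) decomposes as $\alpha \wedge \beta' + \bar\rho(\delta_{m-1})$ with $\bar\rho(\delta_{m-1})$ horizontal.

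First I would record precisely that $\bar\rho$ intertwines the formal differential on $\bar\calA_m$ with the exterior differential on $\Omega^*(SM)$; this is the content of the Cartan structure equations and is already asserted in Section~\ref{sect:cartan} for the realization map $\rho$ on $FM$, and descends to $\bar\rho$ by Definition~\ref{def:rho bar} and Lemma~\ref{lem:rho g} (using that the infinitesimally parallel extension makes the offending $\omega_{ij}$ with $0 \notin\{i,j\}$ vanish to first order at the chosen point, which is exactly what is needed for $d$ on $\bar\calA_m$ to be realized correctly). Next I would invoke the formula from the proof of Lemma~\ref{lem:formal d lambda 1}: $d\lambda_k$ is $\theta_0$ wedged with a combination of $\phi_{k+2j-1,j}$ plus the term $\delta_{m-1}$ built from $D_{m-1,\cdot}$. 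Applying $\bar\rho$ and using $\bar\rho(\theta_0) = \alpha$, I get $d(\bar\rho(\lambda_k)) = \alpha \wedge \bar\rho(\text{combination}) + \bar\rho(\delta_{m-1})$.

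The crucial observation is then that $\bar\rho(\delta_{m-1})$, being the realization of an element of $\bar\calA_m$ of degree $m+1$ that is a sum of a multiple of $\theta_0$ and an element of $\calB_m$ (by Lemma~\ref{lem:formal d lambda 1}, $d\lambda_1 \in \langle\theta_0\rangle + \calB_m$, and the analogous statement holds for general $k$), realizes to $\alpha \wedge (\cdots) + \eta$ where $\eta = \bar\rho(\text{the } \calB_m\text{-part})$ is the pullback of an $(m+1)$-form from $M$, hence horizontal. Therefore $d(\bar\rho(\lambda_k))$ equals a multiple of $\alpha$ plus a horizontal form, and the contact-form multiple is precisely the $\delta$ that appears in the Rumin characterization $D\mu = d(\mu + \theta_0\wedge\delta)$; since $\alpha \wedge d(\bar\rho(\lambda_k) + \alpha\wedge\delta) = \alpha\wedge(\text{horizontal of degree } m+2) = 0$ for dimensional reasons (the base has dimension $m+1$), we conclude $D(\bar\rho(\lambda_k)) \equiv$ (the horizontal part of $d(\bar\rho(\lambda_k))$) $= \bar\rho(d\lambda_k)$ modulo horizontal forms — the point being that $\bar\rho(d\lambda_k)$ and its horizontal reduction differ by a multiple of $\alpha$, which is absorbed, so both are congruent to $D(\bar\rho(\lambda_k))$ modulo horizontal forms.

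The main obstacle I anticipate is bookkeeping around the precise meaning of ``modulo the space of horizontal forms'' and making the Rumin-differential uniqueness argument airtight: one must check that the $(m-1)$-form $\delta$ implicit in $d\lambda_k = \theta_0\wedge(\text{stuff}) + (\text{horizontal})$ can indeed be chosen so that $\theta_0 \wedge d(\bar\rho(\lambda_k) + \alpha\wedge\bar\rho(\delta)) = 0$, which requires knowing that after subtracting $\alpha\wedge\bar\rho(\delta)$ the remaining top-degree behavior is killed — here the dimensional count (forms of degree $m+2$ on a base of dimension $m+1$ vanish) does the work, but one must be careful that $d$ of the horizontal part does not reintroduce a bad term, which again follows because $d$ of a horizontal $(m+1)$-form on an $(m+1)$-dimensional base, pulled back to $SM$, has no purely horizontal component of degree $m+2$. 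I would also double-check that the failure ${d'}^2 \ne 0$ in $\calA_m$ is harmless: we only differentiate once, and the Bianchi-type relations are not used, so the realization $\bar\rho$ genuinely intertwines the single differential. Modulo these checks, the statement follows formally.
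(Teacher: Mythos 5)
Your proposal is correct and takes essentially the same route as the paper: both proofs combine the intertwining $\bar\rho\circ d=d\circ\bar\rho$ with the computation of Lemma~\ref{lem:formal d lambda 1}, and then observe that the leftover term $\delta_{m-1}$ realizes to a semibasic (horizontal) form of top degree $m+1=\dim M$, so that $\theta_0\wedge d(\bar\rho(\lambda_k))=0$ and the Rumin correction may simply be taken to be zero, giving $D(\bar\rho(\lambda_k))=\bar\rho(d\lambda_k)$ modulo (in fact even without) horizontal terms. Two harmless slips in your write-up are worth noting: $\bar\rho(\delta_{m-1})$ is semibasic but not in general the pullback of a form from $M$ (its coefficients vary along the fibers, which is all the dimensional count needs), and the Rumin $\delta$ is an $(m-1)$-form rather than the ``contact-form multiple'' of $d(\bar\rho(\lambda_k))$ --- the correct move, which your own dimension argument justifies, is just to choose $\delta=0$.
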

\begin{proof}
{ Recall that $\bar \rho \circ d = d\circ \bar \rho$.}
 Since $\delta_{m-1}$ has degree $m+1$ and no $\omega_{\cdot 0}$ factors, it follows that  for any Riemannian $M^{m+1}$ 
 the  realization of $d\lambda_k$  is a functional multiple of $\bar \rho(\theta_0)$. 
\end{proof}

 \subsubsection{The formal model}  Since normal cycles are Legendrian, and the realization of $\theta_0$ is the contact form of any $SM$, with Proposition \ref{prop_inv_elements1} the following implies Lemma \ref{lem_module_existence}. 
 \begin{proposition} \label{prop:formal forms} Given $k,p,m$, there exist $\varphi_1 \in { \bar\calA^+_m}, \varphi_2 \in \calB^+_m$ with the following property. Given an oriented Riemannian manifold $M^{m+1}$, let $\theta, \psi$ be as in Theorem \ref{thm:def mult} for $\beta = \bar \rho(\phi_{kp}),  \mu = t_M$, and $\gamma =0$. Then
 \begin{equation}\label{eq:ab terms}
 \theta = \bar\rho(\varphi_1), \quad \psi= \bar \rho(\varphi_2).
 \end{equation}
 \end{proposition}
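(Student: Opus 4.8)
The plan is to feed the data of $\mu=t_M$ into the Alesker--Bernig formula (Theorem~\ref{thm:def mult}) and to evaluate the two resulting fiber integrals inside the formal Cartan model of Section~\ref{sect:cartan}, exploiting that every ingredient of that formula is the $\bar\rho$-realization of a universal (that is, $M$-independent) formal element and that the operations building $\theta$ and $\psi$ from such ingredients preserve this property. For $\mu=t_M=\glob\bar\Lambda_1^M$ one has $\calF_{t_M}\equiv 0$: indeed $\calF_{t_M}(x)=\int_{S_xM}\bar\rho(\lambda_1)$, $t_M$ has no component of degree zero since $\bar\Lambda_1$ has none, and each $\phi_{1+2j,j}$ carries a $\theta$-factor, which restricts to zero on the fiber $S_xM$. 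Hence \eqref{eq_ab1}--\eqref{eq_ab2} collapse to
\begin{align*}
 \theta&=(-1)^{m+1}\,\xi_*\bigl(\zeta^*\bar\rho(\phi_{kp})\wedge\eta^*a^*\Delta_{t_M}\bigr),\\
 \psi&=\pi_*\bigl(\bar\rho(\phi_{kp})\wedge a^*\Delta_{t_M}\bigr).
\end{align*}
Since $\bar\Lambda_1^M=[\bar\rho(\lambda_1),0]$ with $\lambda_1$ as in \eqref{LK_form}, we have $\Delta_{t_M}=D\bar\rho(\lambda_1)$, which by Proposition~\ref{prop:cartan D} agrees with $\bar\rho(d\lambda_1)$ up to a horizontal form of degree $m+1$; by the remark following Theorem~\ref{thm:def mult} such a term does not affect the fiber integrals, so we may replace $\Delta_{t_M}$ by $\bar\rho(d\lambda_1)$, and by Lemma~\ref{lem:formal d lambda 1} the element $d\lambda_1\in\bar\calA_m^+$ is universal of formal degree $m+1$; by \eqref{eq:a rho} the same holds for the formal source of $a^*\Delta_{t_M}$. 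Thus the statement is reduced to showing that wedge products and fiber integrations transport $\bar\rho$-realizations of formal elements to $\bar\rho$-realizations of formal elements.

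For $\psi$ this is a direct application of Lemma~\ref{lem:fiber} to the fiber bundle $\pi\colon SM\to M$, with model fiber $S^m$, structure group $O(m+1)$, and the Levi-Civita connection; taking the invariant current there to be the fundamental cycle $[S^m]$ gives exactly $\pi_*$. In a parallel local trivialization the integrand is written through the formal generators, the vertical integration over $S^m$ absorbs the connection forms and contributes universal constants, and what remains is $\bar\rho$ of an element of formal degree $m+1$ built from the $\theta_i,\Omega_{ij}$; it lies in $\calB_m^+$, and not merely in $\calB_m$, because $\psi$ is a form on the oriented base $M$, so its pullback to $FM$ is basic. This is $\varphi_2$.

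The $\theta$-term is the technical heart. Here the relevant bundle is $\xi\colon\Sigma\to SM$, associated by Section~\ref{sect:ab formula} to the principal $O(m)$-bundle $z\colon FM\to SM$ with model fiber $C\subset S^m\times S^m$ and the canonical connection of Section~\ref{sect:infinitesimal parallel}. Before invoking Lemma~\ref{lem:fiber} one has to write the pulled-back factors $\zeta^*\bar\rho(\phi_{kp})$ and $\eta^*a^*\bar\rho(d\lambda_1)$ as forms on $\Sigma$ in formal terms, and the plan is to adapt the method of moving frames announced in the Introduction: over $\xi_0\in SM$ identify the fiber with $C$ via a positive adapted frame, express the frames over the points $\zeta,\eta$ through the rotations $\rot^{\xi_0}_\zeta$ and $\barrot$ of Lemma~\ref{lem:rot}, and use the covariance \eqref{eq:rho covariance} of $\rho$; this presents each factor as the realization at $\xi_0$ of an explicit formal combination whose coefficients are smooth functions of the fiber coordinates, plus terms carrying differentials along the fiber. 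Wedging and integrating over the compactification of $C$ (the one mentioned after Theorem~\ref{thm:def mult}, which also settles convergence of the fiber integral), the fiber-differential terms integrate to universal constants, and only wedges of $\theta_i,\omega_{i0},\Omega_{ij}$ of total degree $m$ survive; the $O(m)$-equivariance of $\phi_{kp}$, of the canonical connection, and of the integration cycle over $C$, together with the orientation of $M$, force the answer to be $\bar\rho$ of an element of $\bar\calA_m^+$ of formal degree $m$. This is $\varphi_1$.

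I expect the main obstacle to be precisely this last computation: carrying out the moving-frame rewriting over the geodesic-segment region $C$, keeping track of the interplay of the three projections $\xi,\zeta,\eta$ and the antipodal map, and---most delicately---verifying that after all cancellations nothing survives outside the image of $\bar\rho$, equivalently that every horizontal-derivative correction produced by the connection either cancels or dies for degree reasons. Note that only the existence of $\varphi_1,\varphi_2$ is required here; the values of the universal constants are determined afterwards by the template method of Section~\ref{sect:riem}.
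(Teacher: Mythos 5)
Your reductions coincide with the paper's: $\calF_{t_M}=0$, dropping the $\pi^*\tau$ term, replacing $\Delta_{t_M}$ by $\bar\rho(d\lambda_1)$ via Lemma \ref{lem:formal d lambda 1} and Proposition \ref{prop:cartan D}, handling the antipodal map through \eqref{eq:a rho}, and then modelling the pullbacks by moving frames and fiber-integrating via Lemma \ref{lem:fiber} together with an $O(m)$-equivariance argument. So the route is the paper's route; the problem is that you stop exactly where the paper's argument begins in earnest.

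What you label ``the main obstacle'' and leave as an expectation is the actual content of the proof, and it is not established in your proposal. Concretely, two statements are needed. First, that pullback under $\pi_2$ (hence under $\zeta$ and $\eta\circ a$, after factoring through $SM\times_M SM$) of a realized form is again the realization, compatibly with the connection-induced splitting used in Lemma \ref{lem:fiber}, of a universal symbol: in the paper this is Lemma \ref{lem:canonical maps}, the existence of a canonical $\sigma:\bar\calA_m^+\to\Omega^*(S^m,\bar\calA_m)^+$ with $\hat\rho\circ\sigma=\pi_2^*\circ\bar\rho$. Its proof is a genuine computation, not a formality: one needs the infinitesimally parallel extension of Section \ref{sect:infinitesimal parallel}, frame-bundle sections built from $\barrot$ as in Lemma \ref{lem:rot}, the comparison map $\barbar{g}$ together with the fact that it is constant along the directions that would otherwise produce the ``horizontal-derivative corrections'' you worry about (this is how the paper shows those corrections vanish, via the vanishing of $\bar b_*$ and $\barbar{g}_*$ on the complementary factors in the diagram through $FM\times O(m+1)$), and the principal-connection formulas for $R^*\rho$ matching the formal definition \eqref{eq:formal change}. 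Second, that the formal fiber integral over $C$ of an invariant element lands in $\bar\calA_m^+$, i.e.\ the equivariance $I(\hat\Lambda_g\phi)=(\det g)\,L_g I(\phi)$ of Lemma \ref{lem:I}; your appeal to ``equivariance and orientation'' gestures at this but does not prove it. Without these two items the claim that nothing survives outside the image of $\bar\rho$ is precisely what remains open. A smaller point: your justification for discarding the horizontal discrepancy between $\Delta_{t_M}$ and $\bar\rho(d\lambda_1)$ covers the $\theta$-term (fibers of $\xi^\Sigma$ of dimension $m+1$) but not, as stated, the $\psi$-term, where the fibers of $\pi$ have dimension $m$; there one needs a separate argument, e.g.\ that the horizontal degree of $\bar\rho(\phi_{kp})\wedge a^*h$ exceeds $m+1$ when $k\ge 1$, with the case $k=0$ requiring its own check.
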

 
 We will only give explicitly the construction of $\varphi_1$, corresponding to the term of the Alesker-Bernig formula that involves fiber integration over $\Sigma$;  that of $\varphi_2$ is similar but significantly simpler. 

 The gist of the construction is contained in Lemmas \ref{lem:I} and \ref{lem:canonical maps}. The idea is to carry out the fiber integration in the first term of the right hand side of  \eqref {eq_ab1} in purely symbolic terms (the second term vanishes since $\calF_\mu =0$, which is to say that $t_M$ clearly vanishes on singletons).  The underlying observation is  Lemma \ref{lem:formal d lambda 1}, which  implies that in the present setting the forms $\beta, \Delta_\mu$ from 
 \eqref{eq_ab1} are realizations of  elements of  $\bar \calA_m^+$. Lemma \ref{lem:canonical maps} shows that the 
 process of pulling them back under $
 \zeta, \eta$ may  be viewed as the realization of a parallel symbolic process. With the aid of Lemma \ref{lem:fiber}, Lemma 
 \ref{lem:I} shows that the fiber integration step in the Alesker-Bernig formula may also be carried out symbolically, and that 
 the symbolic result belongs to $\bar \calA_m^+$.  Proposition \ref{prop_inv_elements1} then concludes the proof.

%
%
%

 Recalling the left actions of $O(m)$ on $S^m$ and on $\bar \calA_m$, we define the left  action $\Lambda$ of $O(m)$ on $  \Omega^*(S^m,  \bar\calA_m) $ by
 \begin{equation}
 \Lambda_g \phi := L_g \circ ({g\inv}^*\phi )= {g\inv}^*(L_g \circ \phi).
 \end{equation}
 Define similarly an action  $ \hat \Lambda$ of the same group on
 $  \Omega^*(S^m\times S^m,  \bar\calA_m) $ via the diagonal action  on $S^m\times S^m$.
  Let $\tilde \zeta, \tilde \eta: S^m \times S^m \to S^m$ denote the projections to the respective factors; these maps clearly  intertwine  the $O(m)$ actions. Denote by $\Omega^*(S^m,\bar \calA_m)^+$ the subspace on which $O(m)$ acts by multiplication by the determinant, and note that, if $\alpha,\omega \in \Omega^*(S^m,\bar\calA_m)^+$, then  $\tilde\eta^*\alpha \wedge\tilde\zeta^*\omega \in \Omega^*(S^m\times S^m,\bar\calA_m)^{O(m)}$.

 Given $\xi_0\in SM$ with $x_0 := \pi(\xi_0) \in M$, let $b$ be an adapted frame at $\xi_0$. Define the diffeomorphism $ \psi_{ b}:   S_{x_0}M\to S^m $, $ \psi\inv_{ b} ( u):=  u_i  b_i$, and define $\hat \rho_b: \Omega^*(S^m,\bar\calA_m) \to \Omega^*(S_{x_0}M, \bigwedge{}^*T_{\xi_0} SM)$ by
 \begin{equation}\label{eq:def hat rho b}
\hat \rho_b(\phi):= \psi_b^*(\bar \rho_b \circ \phi) = \bar \rho_b \circ (\psi_b^*\phi).
\end{equation}
 If $b, b'=R_g b$ are  adapted frames at $\xi_0\in SM$, where $g \in O(m)$, then by Lemma \ref{lem:rho g}
 $$
 \hat \rho_{b'}\phi= \bar \rho_{R_gb}\circ \psi_{R_g b}^*\phi=\bar \rho_b \circ L_g \circ \psi_b^*((g\inv)^* \phi)=  { \bar \rho_b} \circ  \psi_b^*(L_g \circ(g\inv)^* \phi)=\hat \rho_b (\Lambda_g \phi).
 $$
It follows that this value is independent of the choice of positively oriented frame $b$ if $\phi $ is $SO(m)$-invariant. 

Consider the bundle $SM\times_M SM \to SM$ under projection to the first factor as an $O(m)$ bundle associated to $z$, 
endowed with the connection induced by the canonical connection on $z$. Here  the action of $O(m)$  on the model fiber 
$S^m\subset \RR^{m+1}$ is again induced by the standard action on the $1,\dots,m$ coordinates, and fixes the 0 
coordinate; thus the projections from the bundle $\xi$, obtained by omitting the second or third coordinate, are both bundle 
maps that intertwine the connections.
We then obtain a  map $\hat \rho:\Omega^*(S^m, \bar \calA_m)^{+} \to \Omega^*(SM \times_M SM)$ as the restriction of $\hat \rho_b$ for any choice of positive adapted frame $b$, via the constructions of Section \ref{sect:fiber}.
In the same manner we have $\hathat \rho:\Omega^*(S^m\times S^m, \bar \calA_m)^{O(m)} \to \Omega^*(SM \times_M SM\times_M SM)$.  Factor the maps $\zeta= \pi_2 \circ \zeta_1, \eta=\pi_2 \circ \eta_1$, where the $\zeta_1,\eta_1: SM \times_M SM\times_M SM \to SM \times_M SM$ respectively omit the third and second coordinates. 

 \begin{lemma} \label{lem:I} Let $I$ denote integration over the model fiber
 $C := \{(z,y): y \ne \pm e_0, \ z \in \overline{e_0,y} \}\subset S^m\times S^m$, with an appropriate choice of orientation. Then the following diagram commutes for all $(m+1)$-dimensional oriented Riemannian manifolds $M$:
\begin{equation}\label{cd0}
\begin{CD}
(\Omega^*(S^m,\bar \calA_{m})^+)^{\otimes 2 } & @>\hat \rho^{\otimes 2 }>>& \Omega^*(SM\times_M SM)^{\otimes 2 }\\
@V{\tilde \zeta^*\wedge\tilde \eta^*}VV& & @ V{ \zeta_1^*\wedge \eta_1^*}VV \\
\Omega^*(S^m\times S^m, \bar\calA_{m})^{O(m)} &@>\hathatrho>>  &  \Omega^*(SM\times_M SM\times_M SM)\\
@VIVV& & @ V{\xi^\Sigma_*}VV \\
\bar\calA^+_m & @>\bar \rho >> &  \Omega^*(SM) 
\end{CD}
\end{equation}
Here $\xi^\Sigma$ denotes the restriction of the projection $\xi$ to the submanifold $\Sigma$ of \eqref{eq:def sigma}.
\end{lemma}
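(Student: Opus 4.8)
The plan is to split the diagram into its upper and lower square. The upper square asserts $\hathatrho\circ(\tilde\zeta^{*}\wedge\tilde\eta^{*})=(\zeta_{1}^{*}\wedge\eta_{1}^{*})\circ\hat\rho^{\otimes 2}$, and I would deduce it directly from the constructions of Section~\ref{sect:fiber} and the definitions of $\hat\rho,\hathatrho$: over each $\xi_{0}\in SM$ the bundle maps $\zeta_{1},\eta_{1}$ intertwine the connections induced from $z$ and restrict on model fibers to the coordinate projections $\tilde\zeta,\tilde\eta\colon S^{m}\times S^{m}\to S^{m}$, so the restriction map $r_{\xi_{0}}$ is compatible with the relevant pullbacks; since $\bar\rho_{b}$ is multiplicative on $\bar\calA_{m}$ and commutes with pullback of coefficients, applying $r_{\xi_{0}}$ to either side produces, for every positive adapted frame $b$ at $\xi_{0}$, the same $\bigwedge{}^{*}T_{\xi_{0}}SM$-valued form $(\psi_{b}\times\psi_{b})^{*}\bigl(\tilde\zeta^{*}(\bar\rho_{b}\circ\alpha)\wedge\tilde\eta^{*}(\bar\rho_{b}\circ\omega)\bigr)$ on the fiber. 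This square is routine.

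The substance is the lower square, $\bar\rho\circ I=\xi^{\Sigma}_{*}\circ\hathatrho$. First I would use the orientation of $M$ to replace $z\colon FM\to SM$ by its reduction to the principal $SO(m)$-bundle of positively oriented adapted frames, so that $SM\times_{M}SM\times_{M}SM\to SM$ is an associated bundle with model fiber $S^{m}\times S^{m}$, structure group $SO(m)$ acting diagonally, and the induced connection. Fixing an orientation of $C\subset S^{m}\times S^{m}$, the connectedness of $SO(m)$ and its preservation of $C$ make the integral current carried by $C$ an $SO(m)$-invariant current, so Lemma~\ref{lem:fiber} applies: for any $\psi\in\Omega^{*}(SM\times_{M}SM\times_{M}SM)$,
\[
\bigl(\xi^{\Sigma}_{*}(\psi|_{\Sigma})\bigr)\big|_{\xi_{0}}=\int_{\Sigma_{\xi_{0}}}r_{\xi_{0}}(\psi)\ \in\ \bigwedge{}^{*}T_{\xi_{0}}SM,
\]
where $\Sigma_{\xi_{0}}$ is the copy of $C$ in the fiber over $\xi_{0}$, oriented via $\psi_{b}\times\psi_{b}$. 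Taking $\psi=\hathatrho(\Phi)$ with $\Phi=\tilde\zeta^{*}\alpha\wedge\tilde\eta^{*}\omega$, and using that $\hathatrho$ is defined by $r_{\xi_{0}}\circ\hathatrho=(\psi_{b}\times\psi_{b})^{*}\circ(\bar\rho_{b}\circ\,\cdot\,)$ while $\bar\rho_{b}$ sends the $\bar\calA_{m}$-coefficients of $\Phi$ to constants on $C$, linearity of $\bar\rho_{b}$ gives
\[
\int_{\Sigma_{\xi_{0}}}r_{\xi_{0}}(\hathatrho(\Phi))=\int_{C}(\bar\rho_{b}\circ\Phi)\big|_{C}=\bar\rho_{b}\!\left(\int_{C}\Phi\right)=\bar\rho_{b}(I\Phi).
\]
To conclude I would check that $I\Phi\in\bar\calA_{m}^{+}$, not merely $\bar\calA_{m}$: the frame change $b\mapsto R_{g}b$, $g\in O(m)$, acts on $S^{m}\times S^{m}$ diagonally by $g$, which multiplies the chosen orientation of $C$ by $\det g$; combined with the $O(m)$-invariance of $\Phi$ (which reads $L_{g}\circ\Phi=g^{*}\Phi$), this yields $L_{g}(I\Phi)=(\det g)\,I\Phi$. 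Hence $\bar\rho_{b}(I\Phi)=\bar\rho(I\Phi)|_{\xi_{0}}$ is independent of the positive adapted frame $b$ by Lemma~\ref{lem:rho g}, and combining the two displays gives $\xi^{\Sigma}_{*}(\hathatrho(\Phi)|_{\Sigma})=\bar\rho(I\Phi)$, which is the lower square.

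The hard part will be making the invocation of Lemma~\ref{lem:fiber} rigorous: the fibers of $\xi^{\Sigma}$ are noncompact manifolds with corners, so I would first pass to the compactification of $\Sigma_{\xi_{0}}$ that makes the fiber integrals convergent (as indicated after Theorem~\ref{thm:def mult}), verify that the current carried by $C$ extends to an integral current there, and then carry through the orientation and determinant bookkeeping underlying both $I\Phi\in\bar\calA_{m}^{+}$ and the consistency of the fiber orientations over the base. The remaining ingredients --- multiplicativity of $\bar\rho_{b}$, that $\zeta_{1},\eta_{1}$ cover $\tilde\zeta,\tilde\eta$ and intertwine the connections, and the degree count placing the fiber integral in formal degree $m$ --- are routine.
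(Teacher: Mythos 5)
Your argument is correct and follows essentially the same route as the paper: the top square is dispatched by unwinding the definitions, and the bottom square is obtained by applying Lemma~\ref{lem:fiber} in a frame-dependent form and then using the fact that the $O(m)$-action on $C$ multiplies its orientation by $\det g$ to conclude $L_g(I\Phi)=(\det g)\,I\Phi$, so that $I$ of an invariant element lies in $\bar\calA_m^+$ and the frame choice drops out via Lemma~\ref{lem:rho g}. Your passage to the $SO(m)$-reduction of positive frames and your flagging of the fiber compactification are only minor packaging differences from the paper's treatment.
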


\begin{proof} 
That the top square commutes follows  straightforwardly from the definitions.

 Lemma \ref{lem:fiber} implies that a variant of the  bottom square commutes, in which  the invariance conditions on the spaces on the left are relaxed, 
 and the maps ${ \bar \rho}, \hathat \rho$ are replaced by ${\bar \rho_b}, \hathat \rho_b$ for some frame $b$ at $\xi_0$.
Since the orientation of $C$ is reversed under the action of $g \in O(m)$ if $\det g = -1$, we find that
 $$
 I(\hat\Lambda_g \phi) = ( \det g) \,  L_g I(\phi),  \quad g\in O(m).
 $$
Thus the restriction of $I$ to the subspace of invariant elements indeed takes values in $\bar\calA_m^+$, and the given diagram commutes.
 \end{proof}

 The next step is to use $\hat \rho$ to construct a universal model for the pullback of $\bar \rho(\bar \calA_m^+)$ under the projection $\pi_2$ of $SM\times_M SM$ to the second factor.

\begin{lemma}\label{lem:canonical maps} 
There exists a canonical map $\sigma$ such that the following diagram commutes:
{\begin{equation}\label{cd1}
\begin{CD}
\bar \calA_m^+   & @>\bar\rho>> & \Omega^*( SM)\\
@V{\sigma}VV &  &   @V{\pi_2^*}VV \\
\Omega^*(S^m,\bar \calA_{m})^+ &@>{ \hat\rho}>>  &  \Omega^*(SM\times_M SM)
\end{CD}
\end{equation}
}
\end{lemma}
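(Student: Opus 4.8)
The plan is to build $\sigma$ explicitly out of the rotations $\barrot_u\in SO(m+1)$ of Section~\ref{sect:infinitesimal parallel}, which already carry all the geometry of the infinitesimally parallel extension. Fix a positive adapted frame $b$ at $\xi_0\in SM$, put $x_0=\pi(\xi_0)$, and for $u\in S^m\setminus\{-e_0\}$ set $\eta=\psi_b\inv(u)\in S_{x_0}M$. Viewing $b$ as the isometry $\RR^{m+1}\to T_{x_0}M$, $e_i\mapsto b_i$ (so $\psi_b=b\inv$), the conjugacy relation \eqref{eq:conjugate rotations} gives $\rot^{\xi_0}_\eta\circ b=b\circ\barrot_u=R_{\barrot_u}b$; hence the infinitesimally parallel extension satisfies $\bar b(\eta)=R_{\barrot_u}b$, a positive adapted frame at $\eta$, so that $\bar\rho(\varphi)|_\eta=\bar\rho_{R_{\barrot_u}b}(\varphi)$. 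Since $\xi_0$ and $\eta$ lie over the same point of $M$, the two realizations should differ only by this ``internal rotation'', and I would accordingly define $\sigma$ as the multiplicative extension of
$$\sigma(\theta_i):=\tilde L_{\barrot_u}\theta_i,\qquad \sigma(\Omega_{ij}):=\tilde L_{\barrot_u}\Omega_{ij},\qquad \sigma(\omega_{0i}):=\tilde L_{\barrot_u}\omega_{0i}+\bigl(\barrot_u\inv d\barrot_u\bigr)_{i0}\otimes 1,$$
with $\tilde L$ as in \eqref{eq:tilde L}, the entries of $\barrot_u$ and $\barrot_u\inv d\barrot_u$ read as functions and $1$-forms on $S^m\setminus\{-e_0\}$, and $1\in\bar\calA_m$ the identity. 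Thus the ``$\bar\calA_m$-valued function'' part of $\sigma(\varphi)$ is $u\mapsto\tilde L_{\barrot_u}\varphi$, and the extra $1$-form term, carried only by connection forms, is the Maurer--Cartan form of $u\mapsto\barrot_u$; its appearance is exactly the gauge correction one gets when applying the covariance relation \eqref{eq:rho covariance} with a point-dependent group element.

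This $\sigma$ is manifestly independent of the auxiliary frame $b$ (only $\barrot$, $\tilde L$ and the coordinate functions of $S^m$ enter), hence canonical. To see that it restricts to a map $\bar\calA_m^+\to\Omega^*(S^m,\bar\calA_m)^+$, i.e.\ that $\sigma\circ L_g=\Lambda_g\circ\sigma$ for $g\in O(m)$, one uses the conjugation identity $\barrot_{gu}=g\,\barrot_u\,g\inv$, immediate from \eqref{eq:conjugate rotations} together with the fact that $g$ fixes $e_0$; this makes each of the three defining formulas equivariant, the Maurer--Cartan term because $g$ is constant.

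The heart of the matter is the commutativity $\hat\rho\circ\sigma=\pi_2^*\circ\bar\rho$. By the associated-bundle construction of Section~\ref{sect:fiber} and the equivariance above it suffices to treat the frame-dependent maps $\hat\rho_b,\bar\rho_b$, and, since all four maps are algebra homomorphisms, only the generators $\theta_i,\omega_{0i},\Omega_{ij}$. One then evaluates both sides on the three types of tangent vector of $SM\times_M SM$ at $(\xi_0,\eta)$ distinguished by the canonical connection on $SM\times_M SM\to SM$ and the base/fibre splitting of $SM$. For directions along $M$ or along the fibre of $SM$ over $\xi_0$, the identity $\bar b(\psi_b\inv(u))=R_{\barrot_u}b$ together with the defining property $\restrict{\bar b^*\rho(\omega_{ij})}{\xi_0}=0$ ($0\notin\{i,j\}$) of the infinitesimally parallel extension show that $\pi_2^*\bar\rho(\varphi)$ reduces to $\bar\rho_b(\tilde L_{\barrot_u}\varphi)$ on the appropriate component, matching the function part of $\hat\rho_b(\sigma(\varphi))$; the one genuinely delicate check is $\omega_{0i}$ on the fibre of $SM$, where the two sides agree only because of the identity $(\barrot_u)_{00}=u_0$ (the $(0,0)$ entry of $\barrot_u$ is the $0$-th coordinate of $\barrot_u(e_0)=u$). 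For the fibre direction over $\eta$, differentiating $u\mapsto R_{\barrot_u}b$ along the fibre produces an $\hat\pi$-vertical vector on which $\rho(\varphi)$ forces precisely the Maurer--Cartan term, the cross-terms being controlled by the structure equations \eqref{structure_equations} and the formula for $d\alpha$ in the canonical splitting of $T(SM)$. Finally, the formulas for $\sigma(\varphi)$ are singular at $u=-e_0$; to see that $\sigma(\varphi)$ extends smoothly there (so that it genuinely lies in $\Omega^*(S^m,\bar\calA_m)^+$), one reruns the construction with basepoint $-e_0$ and patches the two local definitions on their overlap by means of the virtual antipodal relation \eqref{eq:a rho}, $\bar\rho\circ L_{-1}=a^*\circ\bar\rho$.

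I expect the main obstacle to be exactly the fibre-direction bookkeeping for the connection generator $\omega_{0i}$: cleanly separating the rotated-connection-form contribution from the Maurer--Cartan correction and reconciling the derivative $d\barrot_u$ with the formal apparatus through the quotient \eqref{O_action} defining $\bar\calA_m$, using the structure equations \eqref{structure_equations}. The tensorial generators $\theta_i,\Omega_{ij}$, the equivariance, and the smoothness at $-e_0$ should all be routine once the conjugation identity $\barrot_{gu}=g\barrot_ug\inv$ and the arithmetic identity $(\barrot_u)_{00}=u_0$ are in hand.
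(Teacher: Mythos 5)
Your $\sigma$ is essentially the paper's map: the paper defines $s_g(\theta_i)=\tilde L_g\theta_i$, $s_g(\Omega_{ij})=\tilde L_g\Omega_{ij}$, $s_g(\omega_{i0})=\tilde L_g\omega_{i0}+g_{ji}\,du_j$ for an \emph{arbitrary} $g\in SO(m+1)$ with $g\cdot e_0=u$, and your formula is the special case $g=\barrot_u$, since $(\barrot_u)_{j0}=u_j$ gives $(\barrot_u\inv d\barrot_u)_{i0}=(\barrot_u)_{ji}\,du_j$ (note this correction is attached to $\omega_{i0}$, not $\omega_{0i}$; as written your formula has the transposed entry, a sign slip). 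Two points the paper's proof actually turns on are, however, missing or wrong. First, the behavior at $u=-e_0$: the antipodal relation \eqref{eq:a rho} concerns the realization on $SM$ and does not patch sections of $SO(m+1)\to S^m$; indeed the rotation based at $-e_0$ sends $e_0\mapsto -u$, so ``rerunning the construction at $-e_0$'' does not produce a competing local definition of the same map. What is needed is the invariance $s_{gh}=s_g\circ L_h$ for $h\in O(m)$, which shows that on $\bar\calA_m^+$ the value $\restrict{s_g(\phi)}{u}$ depends only on $u=g\cdot e_0$ and $\det g$; this is what makes $\sigma$ canonical and, via arbitrary smooth local sections of $SO(m+1)\to S^m$, smooth across $-e_0$. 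You never state this independence, and your construction needs it.

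Second, and this is the genuine gap: the commutativity $\hat\rho\circ\sigma=\pi_2^*\circ\bar\rho$ in the horizontal ($T_{\xi_0}SM$) directions is the hard content of the lemma, and your sketch asserts it rather than proves it, explicitly deferring the ``fibre-direction bookkeeping''. The zeroth-order identity $\bar b(\psi_b\inv(u))=R_{\barrot_u}b$ does not suffice, because $\pi_2^*$ sees the first-order variation of the adapted frame at $\eta$ as $\xi$ moves. The paper controls this by building the auxiliary moving frame $\bar\beta$ around $\eta_0$ from the section $\bar g(\barrot_w(u))=\barrot_w\cdot g$ and introducing the transition map $\barbar{g}(\xi,\eta)$ defined by $R_{\barbar{g}}\bar b(\xi)=\bar\beta(\eta)$; the key computation \eqref{eq:constant rot} shows $\barbar{g}$ is constant along the directions corresponding to $T_{\xi_0}SM$, so its Maurer--Cartan contribution vanishes there and survives only along $T_{\eta_0}S_{x_0}M$, where it equals $\bar g_{ji}\,d\bar g_{j0}$; the standard transformation of the canonical and connection forms under the right action then yields exactly $s_g$. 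Without this device (or an equivalent one), your claim that $\pi_2^*\bar\rho(\varphi)$ ``reduces to $\bar\rho_b(\tilde L_{\barrot_u}\varphi)$'' on the horizontal component, with the correction appearing only in the second-fiber direction, is precisely what remains to be shown.
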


\begin{proof} 
Given $g \in O(m+1)$ we define an algebra homomorphism $s_g: \bar \calA_m \to \Omega^*(S^m, \bar \calA_m)$ by taking $s_g(\theta_i) = \tilde L_g(\theta_i), s_g(\Omega_{ij}) = \tilde L_g(\Omega_{ij})$ and
  \begin{equation}
 \label{eq:formal change} 
s_g(\omega_{i0})    =  \tilde L_g(\omega_{i0}) + g_{ji}{du_j},
 \end{equation}
 where we recall the definition of $\tilde L_g$ from the discussion around  \eqref{O_action}, \eqref{eq:tilde L}.
One sees readily that if $h \in O(m)$ then
 $$
 s_{gh} = s_g \circ L_h, \quad s_{hg} = \Lambda_h \circ s_g.
 $$
 From the first relation it follows that if $\phi \in \bar \calA_m^+$ then $s_g(\phi) $ depends only on  $g\cdot e_0 =u$ and $\det g$. Thus we may define $\sigma: \bar \calA_m^+ \to \Omega^*(S^m, \bar \calA_m)$ by $\restrict{\sigma(\phi)} u := \restrict{s_g(\phi)} u$, where $g\cdot e_0 = u$ and $\det g = 1$. The second relation now implies that 
 $\sigma(\bar \calA_m^+) \subset \Omega^*(S^m, \bar \calA_m)^+$.
 
   It remains to show that $\hat \rho, \sigma$ fit into \eqref{cd1}.  Selecting a point $\xi_0$, a positive frame $b$ at $\xi_0$, and $g \in SO(m+1)$, we accomplish this by proving the corresponding fact for the 
   contingently defined maps $ \hat \rho_b,s_{ g}$ above. This amounts to the following assertion.
Let $u:= g\cdot e_0 \in S^m$, $\eta_0 := \psi_b\inv(u) \in S_{x_0}M$, and $\beta:= R_g b$. Thus $\beta$ is a positive adapted frame at $\eta_0$.
   The proof of the Lemma will  be completed by showing that the diagram
 \begin{equation}\label{cd2}
\begin{CD}
\bar \calA_m   & @>\restrict{\pi_2^*\circ\bar\rho_{  \beta}}{\xi_0,\eta_0}>> &\bigwedge^*T_{\xi_0,\eta_0}( SM \times_M SM)\\
@V{\restrict{s_{  g}}u}VV &  &   @V{r_{\xi_0}}VV \\
\bigwedge^*T_u S^m \otimes \bar \calA_{m} &@>{ \restrict{\hat\rho_b}{u}}>>  &  
\bigwedge^*T_{\eta_0} S_{x_0}M\otimes \bigwedge^*T_{\xi_0} SM
\end{CD}
\end{equation}
commutes, where the map on the right is defined as usual via the connection.

Observe first that if $\bar g:S^m \supset U \to SO(m+1)$ is a local section extending $g$, i.e.
$$
\bar g(v)\cdot e_0 = (\bar g_{00}(v), \bar g_{10}(v),\dots,\bar g_{m0}(v)) = v, \quad \bar g (u ) = g,
$$
then  \eqref{eq:formal change} may  be rewritten as
\begin{equation}\label{eq:virtual relation}
\restrict{s_{g}(\omega_{i0})   }u  = { \bar g_{00}(u) \bar g_{ji}(u)\omega_{j0} + \bar g_{0i}(u) \bar g_{ j 0}(u)  \omega_{0j} } + \bar g_{ji}(u)\restrict{d\bar g_{j0}} u.
\end{equation}
We construct a convenient choice of such an extension as follows.
 By continuity we may assume that $\xi_0\not \perp \eta_0$, or equivalently that $u \not \perp e_0$. Thus by Lemma \ref{lem:rot}, we may take 
\begin{equation}\label{eq:adapted rotation}
\bar g(\barrot_w(u)) := \barrot_w \cdot g
\end{equation}
for $w$ in some small neighborhood of $e_0$.

  Let $\bar b$ be the infinitesimally parallel extension of $b$, and define an extension $\bar \beta$ of $\beta$ to an adapted moving frame on a neighborhood of $\eta_0$ in $SM$ as follows: for $v$ in a neighborhood of $u$ in $S_{x_0}M$, take $\bar \beta(v):= R_{\bar g\psi_b(v)} b$; then extend by parallel translation along geodesics from $x_0$.
%
For $(\xi,\eta)\in SM \times_M SM$ lying in the appropriate domain we define $\barbar g(\xi,\eta)\in SO(m+1)$ by the condition $R_{\barbar g (\xi,\eta)} \bar b(\xi) = \bar \beta(\eta)$. Thus, by Lemma \ref{lem:rot} and the definition of the infinitesimally parallel extension $\bar b$,
 \begin{equation}\label{eq:pre-rotation}
 \barbar g(\xi, \eta) = \barrot_{\psi_b(\xi)}\inv \cdot (\bar g\circ \psi_b)(\eta)
 \end{equation}
 for $\xi \in S_{x_0} \setminus \{-\xi_0\}, \eta \in \psi_b\inv(U)$, so that by Lemma \ref{lem:rot} and  \eqref{eq:adapted rotation}
\begin{equation}\label{eq:constant rot}
 \barbar g(\xi, \rot^{\xi_0}_\xi(\eta_0)) = \barrot_{\psi_b(\xi)}\inv \cdot (\bar g\circ \psi_b(\rot^{\xi_0}_\xi(\eta_0)))
  = \barrot_{\psi_b(\xi)}\inv \cdot (\bar g(\barrot_{\psi_b(\xi)}(\psi_b(\eta_0))) \equiv g.
 \end{equation}
 Since the group action commutes with parallel translation, the map $\barbar g$ is determined by these restricted values: if $\xi',\eta' \in S_xM$ are the parallel translates of $\xi,\eta$ along the geodesic from $x_0$ to $x$, then $\barbar g(\xi',\eta') = \barbar g(\xi,\eta) $.

Let  $R:FM \times O(m+1) \to FM$ be the right action.
We have the locally defined commutative diagram
 \begin{equation}\label{big diagram}
\begin{tikzcd}
 \calA_m\arrow{r}{\rho}
&{\Omega^*(FM)}\arrow{r}{ R^*}   \arrow{d}{\bar \beta^*}
& \Omega^*(FM \times O(m+1))\arrow{r}{\ev_{b,g}}\arrow{d}{(\bar b, \barbar g)^*}
&
\bigwedge{}^*T_{ b}FM \otimes \bigwedge^* T_g{}O(m+1)
\arrow{d}{{\restrict{ (\bar b, \barbar g)^*}{\xi_0,\eta_0}}} \\
\bar \calA_m \arrow{r}{\bar \rho_{\bar \beta}}\arrow[hook]{u}
& \Omega^*(SM)\arrow{r}{\pi_2^*}
& \Omega^*(SM \times_M SM) \arrow{r}{\ev_{\xi_0,\eta_0}}
& \bigwedge{}^* T_{\xi_0} SM  \otimes \bigwedge{}^* T_{\eta_0} S_{x_0}M
\end{tikzcd}
\end {equation}
where, as in \eqref {cd2}, the space on  the lower right is identified with $\bigwedge{}^*T_{\xi_0,\eta_0}(SM \times_M SM)$ via the connection.
We claim that the composition of maps from the lower left corner across the top line to the lower right coincides with $\restrict{\hat \rho_b \circ s_{ g}}u$, which will imply commutativity of \eqref{cd2}. 

Note first that the derivatives $\bar b_*, \barbar g_*$ at $(\xi_0,\eta_0)$ vanish on $T_{\eta_0} S_{x_0}M ,  T_{\xi_0} SM $ respectively. The first of these statements is obvious, while the second follows from \eqref{eq:constant rot} and the definition of the connection.
By the definitions of $\bar b$ and of the realization 
maps we have
\begin{align*}
\restrict{\bar b^* (\rho(\theta_i)) }{\xi_0} = \bar \rho_b(\theta_i), & \quad
\restrict{\bar b^* (\rho(\Omega_{ij})) }{\xi_0} = \bar \rho_b(\Omega_{ij}), \\
\restrict{\bar b^* (\rho(\omega_{i0}) )}{\xi_0}& = \bar \rho_b (\omega_{i0}),\\
\restrict{\bar b^* (\rho(\omega_{ij}) )}{\xi_0}& = 0, \quad 0\notin \{i,j\} .
\end{align*}
Meanwhile, the restrictions of $\barbar g_*$ and $(\bar g\circ \psi_b)_*$  to $T^*_{\eta_0} S_{x_0}M$ clearly coincide.
Thus the basic theory of connections on principal bundles (\cite{bishop_crittenden64}, Lemma 1 of Section 5.1, and Lemma 
1 of Section 6.1.1) gives the values at $(b, g) \in FM \times O(m+1)$ as
 \begin{align*}
\restrict{ R^* \rho(\theta_i)}{b,g}  & = g_{ji}\restrict{\rho(\theta_{j})}b\\ 
\restrict{R^* \rho(\Omega_{ij})}{b,g} & = g_{k i}g_{l  j} \restrict{\rho(\Omega_{kl})}b \\
 \restrict{R^* \rho(\omega_{i0})}{b,g}    &= g_{k i}g_{l 0} \restrict{\rho( \omega_{kl}) }b+ g_{ji} {dg_{j0}} .
 \end{align*}
Applying the adjoints of the derivatives described above and comparing with the definition of $s_g$ in \eqref{eq:formal 
change} and \eqref{eq:virtual relation} completes the proof. 
\end{proof}

\begin{proof}[Proof of Proposition \ref{prop:formal forms}]
Recalling \eqref{eq:a rho}, we construct $\varphi_1$ by feeding $L_{(-1)} d\lambda_1 \otimes \phi_{kp}$ into the tensor square of \eqref{cd1}, then applying \eqref{cd0}. Note that, in applying Theorem \ref{thm:def mult} in our present 
circumstances, we have $\calF_\mu =0$, and we may ignore the $\pi^*\tau $ term.
\end{proof}

\subsection{Proof of Lemma \ref{lem:exist iso}} \label{sect:exist iso}
The proof is similar to that of Lemma \ref{lem_module_existence}, with the Alesker-Bernig formula replaced by the more 
elementary construction giving the restriction of smooth curvature measures on the level of differential forms, which we now describe. 

\subsubsection{Restrictions of curvature measures} Let $e:M^{m+1} \hookrightarrow N^{n+1}$ be an isometric immersion, $m < n$. Let $S^\perp_M N \subset \restrict{SN}M $ denote the bundle of spheres normal to $M$, with projection $q:S^\perp_MN \to M$. Consider the open submanifold $\restrictplus{SN}M := \restrict {SN}M\setminus S^\perp_MN$. Put
$$
i:\restrictplus{SN}M \to  {SN}, \quad j :S^\perp_M N \to SN.
 $$
for the inclusion maps. Normalized projection to the tangent spaces of $M$ gives a projection $p:\restrictplus{SN}M \to SM$, whose fibers are open hemispheres of dimension $n-m$.

\begin{proposition}\label{prop:iota*}
 Let $\omega \in \Omega^{n}(SN), \gamma\in \Omega^{n+1}(N)$. 
  Then the fibers of the bundles $p,q$ may be canonically oriented so that
  \begin{equation} \label{eq restriction fiber integral} e^* [ \omega,\gamma]=e^* [ \omega,0]  = [  p_*i^*\omega,  q_*j^* \omega ]\end{equation}
\end{proposition}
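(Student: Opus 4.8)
The plan is to unwind the definitions of both sides of \eqref{eq restriction fiber integral} on the level of normal cycles and differential forms, and to verify that the two set functions agree on every simple smooth polyhedron $P \subset M$. Recall that $[\omega,\gamma]$ assigns to $P$ the signed measure $\pi_*(\nc_N(P) \with \omega) + \int_{\cdot \cap P}\gamma$. Since $P$ is a submanifold with corners of $M$ of dimension $m+1 \le n$, it has zero $\calH^{n+1}$-measure in $N$, so the $\gamma$-term vanishes for every choice of $\gamma$; this justifies the first equality in \eqref{eq restriction fiber integral} and explains why only $\omega$ enters on the right. The real content is thus the identity $e^*[\omega,0] = [p_*i^*\omega, q_*j^*\omega]$, i.e.
\begin{equation}\label{eq:to show}
\pi^M_*\bigl(\nc_M(P)\with p_*i^*\omega\bigr) + \int_{\cdot\cap P} q_*j^*\omega = \pi^N_*\bigl(\nc_N(P)\with\omega\bigr),
\end{equation}
where on the left the second term should be read as a measure on $M$ via $q$.

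First I would describe $\nc_N(P)$ geometrically. For a simple smooth polyhedron $P$ sitting inside the submanifold $M$, the conormal cycle in $SN$ splits naturally into a ``tangential'' part, supported over $\restrictplus{SN}M$ and consisting of directions making a nonzero angle with $T M$ together with the outer normal data of $P$ inside $M$, and a ``normal'' part supported over the normal sphere bundle $S^\perp_M N$, consisting of the full normal sphere at each point of $P$ (weighted by the local constructible-function data of $P$ in $M$, e.g. the index along faces). Concretely, writing $\nc_M(P)$ for the normal cycle of $P$ as a subset of $M$, there is a fiber bundle map lifting $\nc_M(P)$ along $p$ whose image is the tangential part, and the normal part is the fiberwise join/suspension of $\supp(\nc_M(P))$ with the normal sphere, projecting to $P$ under $q$. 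This decomposition is exactly parallel to the classical picture of tubes: the set of unit normals to a point $x\in P$ in $N$ is the join of the unit normals to $x$ in $M$ with the fiber sphere of $S^\perp_M N$. I would make this precise by choosing, locally, an adapted frame along $M$ and computing $\nc_N(P)$ via the second fundamental form, or by invoking the product/join formula for normal cycles (as already used in the proof of Theorem~\ref{thm:LK isometric invariance}) together with a limiting argument, since $M$ is a deformation retract of a small tube.

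Next I would carry out the two fiber integrations. Splitting $\nc_N(P)$ as above, the slice $\nc_N(P)\with\omega$ breaks into the contribution over $\restrictplus{SN}M$ and the contribution over $S^\perp_M N$. For the first, the restriction of $\nc_N(P)$ to $\restrictplus{SN}M$ is (a bundle push-up of) $\nc_M(P)$ along the open-hemisphere bundle $p:\restrictplus{SN}M\to SM$, so pushing forward $i^*\omega$ fiberwise and then applying $\pi^N\circ i = \pi^M\circ p$ gives precisely $\pi^M_*(\nc_M(P)\with p_*i^*\omega)$; this is a Fubini-type interchange of the slicing with the fiber integration $p_*$, valid because $p$ is a submersion with oriented fibers. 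For the second, over $S^\perp_M N$ the current $\nc_N(P)$ restricts to something that pushes forward under $q$ to a multiple of $\calH^{m+1}\restrict{P}$ — indeed the fiberwise integral of $j^*\omega$ over the normal spheres is by definition $q_*j^*\omega$, a top-degree form on $M$, and $\pi^N\circ j = q$ — so this term becomes $\int_{\cdot\cap P}q_*j^*\omega$, matching the boundary term on the left of \eqref{eq:to show}. The canonical orientations of the fibers of $p$ and $q$ are dictated by the orientation conventions on $\nc_N$, $\nc_M$ and the contact structures, and I would fix them precisely at this stage so that the signs in the two contributions come out consistently; this is the bookkeeping that the statement alludes to with ``may be canonically oriented so that''.

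The main obstacle I anticipate is the rigorous description of $\nc_N(P)$ and its splitting along the decomposition $\restrict{SN}M = \restrictplus{SN}M \sqcup S^\perp_M N$ — in particular verifying that the tangential part is honestly the $p$-bundle lift of $\nc_M(P)$ (including the correct multiplicities coming from the solid-angle/index data along lower-dimensional faces of $P$), and that no extra current is concentrated on the overlap locus, i.e.\ where directions become tangent to $M$. The cleanest route is probably to avoid computing $\nc_N(P)$ by hand and instead argue by a tube/limiting argument: realize $\nc_N(P)$ as a flat limit of $\nc_N$ of thin tubes of $P$ in $N$ (or, equivalently, of smoothings), for which the join structure is transparent, then commute the fiber integrations $p_*, q_*$ with the flat limit using continuity of slicing. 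Once that structural fact is in hand, the remaining steps are a routine application of Lemma~\ref{lem:fiber} (fiber integration in a bundle with connection) and Fubini for currents, together with the observation that $\gamma$ contributes nothing for dimensional reasons.
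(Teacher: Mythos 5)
Your proposal is correct and follows essentially the same route as the paper: the heart of the matter is the decomposition $\nc_N(P) = i_*\bigl(\nc_M(P)\times_p S^{n-m}_+\bigr) + \restrict{\nc_N(M)}{P}$ into the hemisphere-bundle lift of $\nc_M(P)$ and the restricted unit normal bundle of $M$, after which the identity follows from the fiber-integration description of such product currents, exactly as you outline (the $\gamma$-term vanishing for dimensional reasons). The only difference is that the paper does not re-derive this decomposition by a tube/limiting argument; it cites Theorem 4.5 of Fu's paper on curvature measures of subanalytic sets, and settles the canonical orientation of the fibers by noting that $(M,N)$ is locally diffeomorphic to the flat model $(\RR^{m+1},\RR^{n+1})$.
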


Although the fibers of $p$ are not compact, they may be easily compactified by a blowing up process, so that the fiber integral operator $p_*$ that occurs here is well defined.

\begin{proof}
Let a smooth polyhedron $A \subset M$ be given. The proof of Theorem 4.5 of \cite {fu 94} shows how to construct the normal cycle  $\nc_N(A)$ of $A$, considered as a subset of $N$, from its normal cycle $\nc_M(A)$ as a subset of $M$:
under a suitable choice of orientations of the fibers $S^{n-m}_+$ of $p$, we have
$$
\nc_N(A) = i_*(\nc_M(A) \times_p S^{n-m}_+) + \restrict {\nc_N(M) } A. 
$$
The conclusion now follows  from the fiber integration definition of products of currents.

Actually this construction is valid without regard to the Riemannian structure via passage to the conormal cycle. From that perspective, that the fibers are oriented canonically follows from the fact that the pair $(M,N)$ is locally diffeomorphic to the model case $(\RR^{m+1},\RR^{n+1})$.
\end{proof}
The bundle $p$ admits a natural connection, as follows. Take the subspace $SM \subset \restrictplus {SN}M$ to be parallel. Using the map $(\xi,\eta,\theta) \mapsto \cos \theta \, \xi + \sin \theta \, \eta$, the deleted bundle $\restrictplus {SN}M \setminus SM \to SM $ may be identified with the pullback to $SM$ of $S^\perp_MN \times (0,\frac \pi 2) \to M$ . The normal connection on $S^\perp_M N\to M$ coming from the Levi-Civita connection on $N$ induces a connection on the latter space as a bundle over $M$. Now pull this connection back to $SM$ via the projection to $M$. These two descriptions clearly fit together to give the desired connection on $p$. This connection now yields for each $\xi \in SM$ a canonical restriction map $r_\xi:\Omega^*(\restrictplus {SN}M )\to \Omega^*(S_\xi^+N, \bigwedge{}^* T_\xi SM)$, such that any $\phi$ is determined uniquely by the family of $r_\xi(\phi), \ \xi \in SM$.


\subsubsection{The formal model}
With Proposition \ref{prop_inv_elements2}, the following will imply Lemma \ref{lem:exist iso}.

\begin{proposition}\label{prop:iso invariants} Let $k,p,m,n$ be given. Then there exist elements $\psi_1\in \barbar \calA^+_{m,n}, \psi_2 \in  \calB^+_{m,n}$ such that for any  smooth isometric immersion $e:M^{m+1} \hookrightarrow N^{n+1}$
\begin{equation}\label{eq:I phi}
p_*i^* \bar\rho_N(\phi_{kp})= \bar\rho_{rel}(\psi_1), \quad \pi_* j^*\bar \rho_N( \phi_{kp})= \bar \rho_{rel}(\psi_2).
\end{equation}
\end{proposition}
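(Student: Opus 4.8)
The plan is to mirror closely the argument for Proposition \ref{prop:formal forms}, replacing the Alesker--Bernig fiber integration apparatus by the simpler restriction construction of Proposition \ref{prop:iota*}. The essential point is that all the maps involved --- the inclusion $i:\restrictplus{SN}M \to SN$, the projection $p:\restrictplus{SN}M \to SM$ with its natural connection, and the analogous $j, q$ for the normal sphere bundle --- admit formal models realized on the frame bundle $F(M,N)$, exactly as $\zeta,\eta,\xi^\Sigma$ did in Lemma \ref{lem:I}. Concretely, I would first set up the formal counterparts: regard $\restrictplus{SN}M$ (resp.\ $S^\perp_M N$) as a bundle over $SM$ (resp.\ $M$) associated to the principal $O(m)\times O(n-m)$ bundle $z:F(M,N)\to SM$ (resp.\ to $\hat\pi:F(M,N)\to M$), with model fiber the open hemisphere $S^{n-m}_+ \subset S^{n-m}$ (resp.\ the sphere $S^{n-m-1}$) inside the last $n-m$ coordinates of $\RR^{n+1}$. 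The connection on $p$ described after Proposition \ref{prop:iota*} is precisely the one induced by the canonical $O(m)$ connection on $z$ together with the normal connection, so Lemma \ref{lem:fiber} applies verbatim.

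Next I would build the symbolic version of the pullback-and-integrate operation. The form $\bar\rho_N(\phi_{kp})$ restricted to $\restrict{SN}M$ is the realization under $\rho_{rel}$ of $\phi_{kp}$ viewed in $\calA_{m,n}$; using the relations $\theta_\alpha=0$ defining $\calA_{m,n}$ together with the Gauss equation \eqref{eq:gauss}, one sees that $i^*\bar\rho_N(\phi_{kp})$ is the realization of an element that, along the fiber of $p$, depends polynomially on the hemisphere coordinates $(u_{m+1},\dots,u_n)$ and their differentials $du_\alpha$ --- the latter playing, via \eqref{eq:formal change}, the role of the formal connection forms $\omega_{i\alpha}$, exactly as $du_j$ played the role of $\omega_{i0}$ in Lemma \ref{lem:canonical maps}. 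Performing the fiber integration over $S^{n-m}_+$ symbolically then kills the odd-degree monomials in the $du_\alpha$ and produces, by the First Fundamental Theorem invariance argument already used in Proposition \ref{prop_inv_elements2}, an $O(m)\times O(n-m)$-equivariant element of $\barbar\calA_{m,n}$ whose equivariance type is governed by the orientation behavior of the hemisphere under $O(m)\times O(n-m)$; this lands it in $\barbar\calA^+_{m,n}$, which by Proposition \ref{prop:iso invariants}'s hypothesis and Proposition \ref{prop_inv_elements2} is spanned by the $\phi_{kpl}$. Similarly, restricting to $S^\perp_M N$ and fiber-integrating over $S^{n-m-1}$ (a closed sphere, hence even simpler) produces the interior term $\psi_2\in\calB^+_{m,n}$, spanned by the $\psi_{pl}$. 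Assembling a commutative diagram analogous to \eqref{cd0}--\eqref{cd1} --- with $\hat\rho$ replaced by the realization map on $\Omega^*(S^{n-m}_+,\barbar\calA_{m,n})$ and $I$ by integration over the hemisphere --- and checking that each square commutes by the same connection-theoretic bookkeeping as in Lemma \ref{lem:canonical maps} completes the construction of $\psi_1,\psi_2$.

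The main obstacle I anticipate is the same one that made Lemma \ref{lem:canonical maps} delicate: establishing that the $du_\alpha$ genuinely behave as the formal symbols $\omega_{i\alpha}$ under the connection, i.e.\ producing the correct analogue of the substitution \eqref{eq:formal change}--\eqref{eq:virtual relation} and verifying it against the structure equations and the Gauss equation. Here one must keep careful track of two separate index ranges and two separate connections (the Levi--Civita connection of $M$ and the normal connection), and confirm that the chosen orientations of the hemisphere fibers --- canonical by the local model argument in the proof of Proposition \ref{prop:iota*} --- are compatible with the sign conventions in \eqref{def_hatPhi}. Once that dictionary is in place, the remainder is the routine invariant-theory argument already carried out twice in the paper. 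A further minor point worth addressing explicitly is the non-compactness of the hemisphere fibers of $p$: as remarked after Proposition \ref{prop:iota*}, a blowing-up of $S^{n-m}_+$ along its boundary sphere makes the fiber integral well defined, and since the integrand extends smoothly to the blow-up the symbolic integration is unaffected.
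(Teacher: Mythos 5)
Your proposal is correct and follows essentially the paper's own route: the paper's proof runs exactly through the diagram you describe (the formal substitution map built from a section of $SO(m)\times SO(n-m+1)$, the realization $\hat\rho_{rel}$ on $\Omega^*(S^{n-m}_+,\bar\calA_{m,n})$, and fiber integration $J$ over the hemisphere with the connection induced by the canonical and normal connections, justified via Lemma \ref{lem:fiber} as in Lemmas \ref{lem:I} and \ref{lem:canonical maps}), this being Lemma \ref{lem:canonical maps 2}. The only cosmetic difference is that the paper pins down membership in $\barbar\calA^+_{m,n}$ by counting the $du_\beta$ factors (only terms with all $n-m$ of them survive $J$, hence no $\Omega_{\alpha t}$ appear) together with the $O(m)\times O(n-m)$-equivariance of $J$, rather than by invoking the First Fundamental Theorem at that point.
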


The proof of the first assertion is  parallel to the proof of the first assertion of Proposition \ref{prop:formal forms}. 
%
Consider the sphere $S^{n-m} \subset \RR \oplus \RR^{n-m}$,  with  standard basis $e_0,e_{m+1},\dots,e_{n}$ and associated coordinates $u_0,u_{m+1},\dots,u_{n}$. The group $O(n-m)$ acts on $S^{n-m}_+$ by fixing the $0$ coordinate. Put $S^{n-m}_+$ for the open hemisphere $\{u_0 >0\}$. The group $O(m) \times O(n-m)$ admits a left action $\bar \Lambda$ on $\Omega^*(S^{n-m}_+, \bar \calA_{m,n})$ by
$$
\bar \Lambda_{g,h} \phi:= L_{g,h} \circ ({h\inv}^* \phi).
$$
Denote by $\Omega^*(S^{n-m}_+, \bar \calA_{m,n})^{++}$ the subspace $\{\phi:  \Lambda_{g,h} \phi =\det g \cdot \det h \cdot \phi\}$.


\begin{lemma}\label{lem:canonical maps 2} 
There exist canonical maps $\sigma_{rel}, \hat \rho_{rel}$ such that the following diagram commutes, where $J= \int_{S^{n-m}_+}$:
\begin{center}
\begin{tikzpicture}
  \matrix (m) [matrix of math nodes,row sep=3em,column sep=4em,minimum width=2em]
  {
     & \bar\calA_n^+ &  \\
     \bar\calA_{m,n}^{++} &  & \Omega^*({SN})\\
     \Omega^*(S^{n-m}_+,\bar\calA_{m,n})^{++} &   & \Omega^*(\restrictplus{SN}M )\\
      \bar\calA_{m,n}^+  &   & \Omega^*(SM)\\};
  \path[-stealth]
    (m-1-2) edge node [above] {$\bar \rho$} (m-2-3)
               	edge (m-2-1)
    (m-2-3) edge  node [left] {$i^*$} (m-3-3)
    (m-2-1) edge node [above] {$ $} (m-2-3)
            	edge node [left] {$\sigma_{rel}$}  (m-3-1)
    (m-3-1) edge node [above] {$\hat \rho_{rel} $} (m-3-3)
    		edge node [left] {$J$}  (m-4-1)
    (m-4-1) edge node [above] {$\bar \rho_{rel}$} (m-4-3)
    (m-3-3) edge node [left] {$p_*$} (m-4-3);
\end{tikzpicture}
\end{center}

%
\end{lemma}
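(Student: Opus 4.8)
The plan is to follow the proof of Lemma~\ref{lem:canonical maps} essentially verbatim, making the substitutions dictated by the immersed setting: the bundle $\pi_2\colon SM\times_M SM\to SM$ with model fiber $S^m$ is replaced by $p\colon\restrictplus{SN}M\to SM$ with model fiber $S^{n-m}_+$; the realization $\bar\rho$ is replaced by the immersed realization $\bar\rho_{rel}$; and the frame bundle $FM$ is replaced by $F(M,N)$, regarded via $z$ as the principal $O(m)\times O(n-m)$ bundle over $SM$ carrying the canonical connection whose induced connection on the associated bundle $p$ is the natural connection described above. Throughout, $O(m)\times O(n-m)$ acts on $S^{n-m}_+\subset\RR\oplus\RR^{n-m}$ with $O(m)$ acting trivially and $O(n-m)$ acting standardly on $u_{m+1},\dots,u_n$ while fixing $u_0$, so that the projections to $SM$ and to $\restrictplus{SN}M$ are bundle maps intertwining the connections.

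To build $\hat\rho_{rel}$: given a positive adapted frame $b\in F(M,N)$ at $\xi_0\in SM$, write $\psi_b\colon S^+_{\xi_0}N\to S^{n-m}_+$ for the normalization diffeomorphism $u_0b_0+\sum_\alpha u_\alpha b_\alpha\mapsto(u_0,u_{m+1},\dots,u_n)$ and set $\hat\rho_{rel,b}(\phi):=\psi_b^*(\bar\rho_{b,rel}\circ\phi)$, in parallel with \eqref{eq:def hat rho b}. Since the coframe, connection, curvature and second fundamental forms underlying $\bar\rho_{b,rel}$ all transform tensorially under $O(m)\times O(n-m)$, the analogue of Lemma~\ref{lem:rho g} gives $\hat\rho_{rel,b'}=\hat\rho_{rel,b}\circ\bar\Lambda_{g,h}$ whenever $b'=R_{(g,h)}b$; hence on $\Omega^*(S^{n-m}_+,\bar\calA_{m,n})^{++}$ the value is independent of the positive adapted frame chosen at $\xi_0$, and the constructions of Section~\ref{sect:fiber}, using the connection on $p$, globalize it to $\hat\rho_{rel}\colon\Omega^*(S^{n-m}_+,\bar\calA_{m,n})^{++}\to\Omega^*(\restrictplus{SN}M)$. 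The bottom square then commutes, $\bar\rho_{rel}\circ J=p_*\circ\hat\rho_{rel}$, by Lemma~\ref{lem:fiber} applied to the $O(m)\times O(n-m)$-invariant chain given by (a compactification of) the whole fiber $S^{n-m}_+$, oriented as in Proposition~\ref{prop:iota*}; and since $J=\int_{S^{n-m}_+}$ is $O(n-m)$-equivariant it carries $++$-equivariant forms to $+$-equivariant ones, so $J$ indeed takes values in $\bar\calA_{m,n}^+$.

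To build $\sigma_{rel}$ I imitate the construction of $\sigma$. For $g\in SO(n+1)$ with $g\cdot e_0\in S^{n-m}_+$, define an algebra homomorphism $s_g^{rel}\colon\bar\calA_n\to\Omega^*(S^{n-m}_+,\bar\calA_{m,n})$ on generators by letting it agree with $\tilde L_g$, followed by the quotient $\bar\calA_n\to\bar\calA_{m,n}$, on the $\theta_i$, the $\Omega_{rs}$ and the surviving $\omega_{0i},\omega_{i\alpha}$, and by setting $s_g^{rel}(\omega_{0\alpha})=\tilde L_g(\omega_{0\alpha})+g_{r\alpha}\,du_r$ on the vertical normal connection forms, where the $du_r$ are the coordinate differentials of $S^{n-m}_+$. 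The defining relations of $\bar\calA_{m,n}$ are sent to zero, so $s_g^{rel}$ descends to $\bar\calA_{m,n}$. As in Lemma~\ref{lem:canonical maps}, one checks $s_{gh}^{rel}=s_g^{rel}\circ L_h$ and $s_{hg}^{rel}=\bar\Lambda_h\circ s_g^{rel}$ for $h\in O(m)\times O(n-m)$; the first shows that for $\phi\in\bar\calA_n^+$ the form $s_g^{rel}(\phi)$ depends only on $u=g\cdot e_0$ and $\det g$, so $\restrict{\sigma_{rel}(\phi)}u:=\restrict{s_g^{rel}(\phi)}u$ (with $\det g=1$) is well defined, and the second shows $\sigma_{rel}$ takes values in $\Omega^*(S^{n-m}_+,\bar\calA_{m,n})^{++}$ and factors through the quotient $\bar\calA_n^+\to\bar\calA_{m,n}^{++}$.

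It then remains to verify $\hat\rho_{rel}\circ\sigma_{rel}=i^*\circ\bar\rho_N$ on the image of $\bar\calA_n^+$; the other commutativities in the diagram are immediate from the definitions and from the previous paragraph. Exactly as in the passage from \eqref{cd2} through \eqref{big diagram}, this identity reduces pointwise, at a chosen $\zeta\in\restrictplus{SN}M$, to a computation with the infinitesimally parallel extension inside $SN$ of an adapted frame---with the Levi-Civita connection of $N$ along the $M$-directions and the normal connection of the immersion along the fiber directions---together with a convenient local section of $SO(n+1)$ built from the rotations $\barrot_w$ of Lemma~\ref{lem:rot} and the standard structure theory of connections on principal bundles (\cite{bishop_crittenden64}); the only genuinely new ingredients are the Gauss equation \eqref{eq:gauss} for the $\bar\Omega_{ij}$ and the splitting of the tangent bundle of $\restrictplus{SN}M$ dictated by the connection on $p$. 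I expect this verification to be the main obstacle: one must confirm that the $du_r$-corrections built into $s_g^{rel}(\omega_{0\alpha})$ reproduce precisely the effect of restricting the ambient connection forms of $N$ along the adapted-frame, right-translation construction, and that the mixed forms $\omega_{i\alpha}$ and the interaction of tangential with normal directions generate no further corrections---routine but, as in Lemma~\ref{lem:canonical maps}, heavy in indices. Together with Proposition~\ref{prop_inv_elements2} this supplies the formal model required for Proposition~\ref{prop:iso invariants}, and hence for Lemma~\ref{lem:exist iso}.
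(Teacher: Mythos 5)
Your overall strategy is the paper's own: the paper also constructs $\hat\rho_{rel}$ fiberwise via $\psi_b$ and $\bar\rho_{b,rel}$, globalizes it with the connection on $p$, gets the bottom square from Lemma \ref{lem:fiber} together with the determinant-twisted equivariance of $J$, and builds $\sigma_{rel}$ by a rotation construction with $du$-corrections. But as written your proposal has a genuine flaw and a genuine omission. The flaw is in the well-definedness of $\sigma_{rel}$: you allow an arbitrary $g\in SO(n+1)$ with $g\cdot e_0=u$ and claim that the relation $s^{rel}_{gh}=s^{rel}_g\circ L_h$ for $h\in O(m)\times O(n-m)$ forces $s^{rel}_g(\phi)$, $\phi\in\bar\calA_n^+$, to depend only on $u$ and $\det g$. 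It does not: two elements of $SO(n+1)$ with the same image of $e_0$ differ by an element of the full stabilizer $SO(n)$, which is far larger than $S(O(m)\times O(n-m))$, and your cocycle identity fails for $h$ mixing the tangential and normal blocks. On generators one finds
\begin{equation*}
s^{rel}_{gh}(\omega_{0\alpha})-s^{rel}_g(L_h\omega_{0\alpha})=g_{ri}\,h_{i\alpha}\,du_r \quad(\text{sum over tangential } i),
\end{equation*}
precisely because your $du$-correction is attached to the $\omega_{0\alpha}$ but not to the $\omega_{0i}$, a pattern that is $O(m)\times O(n-m)$- but not $O(n)$-equivariant. This is why the paper works with a section $\bar g\colon S^{n-m}_+\to SO(m)\times SO(n-m+1)$: the stabilizer of $e_0$ inside that block subgroup lies in the structure group, so the equivariance you state does suffice there, and moreover the frames $R_{\bar g(u)}\bar b(\xi)$ stay adapted to $M$ in their first $m+1$ entries, which is needed for the comparison with $\bar\rho_{rel}$. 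With your unrestricted $g$ the map is not canonical and the later frame computation breaks.

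The omission is the top square, i.e.\ the identity $\hat\rho_{rel}\circ\sigma_{rel}=i^*\circ\bar\rho$, which you defer as ``routine but the main obstacle.'' That identity is the substance of the lemma: it is exactly where one verifies that the $du$-corrections and the connection on $p$ reproduce the restriction of the ambient realization. The paper proves it by an explicit moving-frame argument: extend $b$ to $\bar b$ with tangential part infinitesimally parallel as in Lemma \ref{lem:canonical maps} and with $\bar b_{m+1},\dots,\bar b_n$ having vanishing normal covariant derivative at the point, set $\beta(u_0\xi+u_\alpha b_\alpha)=R_{\bar g(u)}\bar b(\xi)$, and compare the values of $R^*\rho$ given by the theory of principal connections with \eqref{eq:formal change 2} through the analogue of \eqref{big diagram}. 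Until that computation is carried out (and note the Gauss equation \eqref{eq:gauss}, which you cite as a new ingredient, plays no role in this lemma -- it enters only later, in Theorem \ref{thm:isometric 1}), what you have is a plan rather than a proof, and the plan needs the block-subgroup correction above to be executable.
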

%

\begin{proof}[Proof of Lemma \ref{lem:canonical maps 2}] 

Put $Q$ for the index set $\{0, m+1,\dots, n\}$. Let $O(n-m+1)$ act on $\RR^Q= \RR^{n-m+1}$ in the expected way, so that the action of the subgroup $O(n-m)$ is as described above, fixing  $e_0$.  
Let $\bar g:S^{n-m}_+ \to SO(m) \times SO(n-m+1)\subset O(n+1)$ be a section, and  define
$\sigma_{rel}: \bar \calA_{m,n} \to \Omega^*(S^{n-m}_+, \bar \calA_{m,n})$ by putting, for $u \in S^{n-m}_+$,
\begin{align}
 \restrict{\sigma_{rel}( \theta_i) }u& := \bar g_{j i}(u) \theta_{j}\notag\\ 
 \label{eq:formal change 2}  \restrict{  \sigma_{rel}(\Omega_{ij})}u & := \bar g_{k i}(u) \bar g_{l j} (u)\Omega_{kl}\\ 
 \restrict{\sigma_{rel}(\omega_{ij} )}u& := \bar g_{k i}(u)\bar g_{l j}(u) \omega_{kl} + \bar g_{ji}(u){du_j}\notag \\
& = \bar g_{k i}(u)\bar g_{l j}(u) \omega_{kl} + \bar g_{ji}(u){d\bar g_{j0}},\notag
\end{align}
%
where we take into account the relations \eqref{eq:bar mn relations}.
As in the remarks following the statement of Proposition \ref{prop:formal forms}, one readily sees that the restriction to $\bar \calA^{++}_{m,n}$ of the  resulting algebra homomorphism is independent of the choice of $\bar g$, and that the values of the restricted map enjoy the indicated symmetries.


Applying the constructions of Section \ref{sect:fiber} to the connection on the bundle $p$ described above, we may define 
the map $\hat \rho_{rel}$ by specifying a  map  into $\Omega^*(S^+_\xi N, \bigwedge{}^* T_\xi SM)$ 
for each $\xi \in SM$ . Let $b =(b_0,\dots,b_n)$ be a positive adapted frame at $\xi$, so that $b_0 = \xi$ and $b_1,\dots,b_m $ are tangent to $M$ at $x:= \pi(\xi)$. Put $\psi_b: S^+_xN\to S^{n-m}_+ $ by $\psi_b\inv(u_0,u_{m+1},\dots,u_n):= u_0 b_0 + u_\alpha b_\alpha$. Now put 
$\hat \rho_{b,rel}(\phi):= \bar \rho_{b,rel} \circ \psi_b^*\phi$. Restricting 
this map to $\phi\in \Omega^*(S_+^{n-m},\bar \calA_{m,n})^{++}$ yields $\hat \rho_{rel}$.

We now show that the top square commutes. Extend $b$ to an adapted moving frame $\bar b$ defined on a neighborhood of $\xi$ in $SM$, such that $(\bar b_0,\dots,\bar b_m) $ is constructed as in the proof of Lemma \ref{lem:canonical maps} above, and $\bar b_{m+1},\dots,\bar b_n$ depend only on the base point $\pi(\bar b_0)$ and all have covariant derivative zero at $\pi(\xi)$ with respect to the normal connection. Thus $\restrict{\bar\rho_{\bar b,rel} (\omega_{ij} )}\xi  = \restrict{\bar\rho_{\bar b,rel} (\omega_{\alpha \beta} )}\xi = 0$, $0\notin\{i,j\}$.

%
We may now use the section $\bar g$ above to construct an adapted moving frame on $\restrictplus{SN}M$ in the neighborhood of $S_{\xi_0}^+M$, by taking for $(u_0,u_{m+1},\dots,u_n) \in S^{n-m}_+$, $\xi \in SM$,
$$
\beta( u_0 \xi + u_\alpha b_\alpha):= R_{\bar g(u_0,u_{m+1},\dots,u_n)} \bar b(\xi).
$$
Let $\bar \beta$ be an extension to an adapted moving frame on an open subset of $SN$. Put $\tilde g(u_0 \xi + u_\alpha b_\alpha):= \bar g(u_0,u_{m+1},\dots,u_n)$.
We have the locally defined commutative diagram
 \begin{equation}\label{big diagram}
\begin{tikzcd}
 \calA_n\arrow{r}{\rho}
&{\Omega^*(FN)}\arrow{r}{ R^*}   \arrow{d}{\bar \beta^*}
& \Omega^*(F(M,N) \times O(n-m+1))\arrow{r}{ r_b}\arrow{d}{(\bar b\circ p, \tilde g)^*}
&
\Omega^*(O(n-m+1), \bigwedge{}^*T_{ b}F(M,N) )
\arrow{d}{} \\
\bar \calA_m \arrow{r}{\bar \rho_{\bar \beta}}\arrow[hook]{u}
& \Omega^*(SN)\arrow{r}{i^*}
& \Omega^*(\restrictplus{SN}M) \arrow{r}{r_{\xi_0}}
&\Omega^*  (S^+_{\xi_0}N, \bigwedge{}^* T_{\xi_0} SM )
\end{tikzcd}
\end {equation}
The commutativity of the top square in the diagram in the statement of the Lemma now follows by comparing the values \eqref{eq:formal change 2} with the results of the right action $R$, as in the proof of Lemma \ref{lem:I}.

We now examine the bottom square. Removing the covariance conditions from the spaces on the left, and replacing the 
horizontal maps by $ \hat \rho_{b,rel},  \bar \rho_{b,rel}$, commutativity follows from Lemma \ref{lem:fiber}. It remains to show that $J$ maps $\Omega^*(S^{n-m}_+,\bar \calA_{m,n})^{++}$ to $\bar \calA_{m,n}^+$.
Taking into account the change of orientation of the fiber, we have for any  $(g,h)\in  O(m) \times O(n-m)$
\begin{equation}\label{eq:OxO action}
J(\bar \Lambda_{g,h} \phi)   = (\det h) \bar \Lambda_{g,h} J(\phi).
\end{equation}
This implies that
 \begin{equation}\label{eq:OxO invariance}
 (g,h)\cdot J(\bar \phi_{kp}) = (\det g) J(\bar \phi_{kp}), \quad (g,h) \in O(m)\times O(n-m),
 \end{equation}
 which implies that $J$ maps $\Omega^*(S^{n-m}_+, \bar \calA_{m,n})^{++}$ to $\bar \calA_{m,n}^+$. 
  \end{proof}

\begin{proof}[Proof of Proposition \ref{prop:iso invariants}]
 Insert $\phi_{kp }\in \bar \calA_n^+$ into the top of the diagram in Lemma \ref{lem:canonical maps 2}  and chase it down in both directions. Applying  \eqref {eq:formal change 2} to the terms of the expansion \eqref{def_hatPhi}, it is clear that the only terms that yield $n-m$ factors $du_\beta$ are those for which $\{m+1,\dots, n\} \subset \{\pi_{k+1},\dots, \pi_n\} $. Thus these terms include no factors  $\Omega_{\alpha t}$.    Since  only these terms contribute to the fiber integral $J$, it follows that $\psi_1:=J \circ \sigma_{rel}(\phi_{kp}) \in \barbar \calA_{m,n}^+$. That the bottom square of the diagram commutes is now immediate.

We omit the simpler proof of the second assertion.
\end{proof}

\section{An application to hermitian integral geometry}\label{sect:hermitian} 

We apply our results to study the space $\calC^n_\lambda$ of invariant curvature measures, and the corresponding algebra $\calV^n_\lambda$, on the complex space forms $\CC P^n_\lambda$.

\subsection{Review of hermitian integral geometry} Let us recall the main notions of this subject, developed mostly in  \cite{bfs}.

\subsubsection{General theory} A {\it Riemannian isotropic space} is a pair $(M,G)$, where $M$ is a Riemannian manifold and $G$ is a group of isometries of $M$ that acts transitively on the tangent sphere  bundle $SM$. For such a space
there exist kinematic formulas both for $G$-invariant curvature measures and for $G$-invariant valuations, the latter being the image of the former under the globalization map:
$$
\begin{CD}
\calC^G(M) & @>K>> &  \calC^G(M)\otimes \calC^G(M) \\
@V{\glob }VV &  &@VV\glob \otimes \glob V\\
\calV^G(M) & @>k>> &  \calV^G(M)\otimes \calV^G(M)
\end{CD}
$$
where the coproducts $K,k$ are cocommutative and coassociative. Picking a representative point $o \in M$ and setting $G_o\subset G$ to be the subgroup $\{g:go = o\}$, the pair $(T_oM, \overline{G_o})$ is again a Riemannian isotropic space, where $\overline{G_o}$ is the group generated by the group of translations and the derivative of the action of $G_o$.  Howard's transfer principle (\cite{bfs}, Theorem 2.23) implies that the natural map $\calC^G(M) \to \calC^{\overline {G_o}}(T_oM)$ is an isomorphism of graded coalgebras.

By the  fundamental theorem of algebraic integral geometry (\cite{bfs}, Theorem 2.21), the kinematic operator $k$ is adjoint to the multiplication of $\calV^G(M)$ 
via a natural Poincar\'e duality map $p: \calV^G (M)\to \calV^{G}(M)^*$. By the same token, the action of $\calV^G(M)$ on $\calC^G(M)$ encodes information about the operator $K$ (\cite{bfs}, Corollary 2.20).
In view of the obvious fact that the Riemannian curvature measures are invariant under isometries, we now observe:
\begin{lemma} \label{lem:image R} If $(M,G)$ is a Riemannian isotropic space then $\calR(M) \subset \calC^G(M)$ and $\calLK(M) \subset \calV^G(M)$.
\end{lemma}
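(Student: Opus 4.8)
The plan is to exploit that the realizations $C^M_{kp}$ and the Lipschitz--Killing valuations are built canonically from the Riemannian structure alone, so they are automatically preserved by every isometry; the two inclusions then follow from naturality results already in hand.

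First I would fix $g\in G$ and regard it as a map $g\colon M\to M$. Being a metric-preserving diffeomorphism, $g$ preserves the Levi-Civita connection and hence maps geodesics to geodesics, so it is a totally geodesic isometric immersion of $M$ into itself. Corollary \ref{cor:totally geodesic} then gives $g^*C^M_{kp}=C^M_{kp}$ for all admissible $k,p$. Since the pullback $g^*\colon\calC(M)\to\calC(M)$ is linear and respects the $\omega$-span structure, and $\calR(M)$ is by definition the $\omega$-span of the $C^M_{kp}$, it follows that $g^*$ fixes every element of $\calR(M)$. As $g$ ranges over $G$ this yields $\calR(M)\subset\calC^G(M)$.

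For the valuation inclusion, recall that $\calLK(M)$ is generated as a subalgebra of $\calV(M)$ by $t_M=\glob(\bar\Lambda_1^M)$, with $\bar\Lambda_1\in\widetilde{\calLK}\subset\calR$. By the first part $\bar\Lambda_1^M\in\calR(M)\subset\calC^G(M)$; since globalization commutes with pullback it carries $\calC^G(M)$ into $\calV^G(M)$, so $t_M\in\calV^G(M)$. Finally, by Theorem \ref{thm:product}(1) each $g^*$ is an algebra homomorphism, so $\calV^G(M)$ is a subalgebra of $\calV(M)$ and therefore contains all Alesker powers of $t_M$; hence $\calLK(M)\subset\calV^G(M)$. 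I anticipate no genuine obstacle here: the argument is bookkeeping on top of Corollary \ref{cor:totally geodesic}, the compatibility of globalization with pullback, and the multiplicativity of pullback. The only points worth stating explicitly are that an isometry legitimately counts as a totally geodesic isometric immersion, and that $\calC^G(M)$ and $\calV^G(M)$ are closed under the operations used (the $\omega$-span, the image of globalization, and the Alesker product).
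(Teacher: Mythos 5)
Your proof is correct and is essentially the paper's own argument: the paper states the lemma as an immediate observation, resting on the fact that the Riemannian curvature measures are built canonically from the metric (via $\bar\rho_M$ and the normal cycle) and are therefore isometry-invariant, with the $\calLK(M)$ inclusion following exactly as in your second paragraph since globalization commutes with pullback and each $g^*$ is an algebra homomorphism, so $\calV^G(M)$ is a subalgebra containing $t_M$ and all its Alesker powers. One small caution: Corollary \ref{cor:totally geodesic} is obtained from the relative apparatus of Section \ref{sect:immersed}, which is set up for positive codimension ($n>m$), so for a self-isometry $g\colon M\to M$ it is cleaner to deduce $g^*C^M_{kp}=C^M_{kp}$ directly from the naturality of the Cartan apparatus and of normal cycles under isometries (which is what the paper calls the ``obvious fact'') rather than by citing that corollary in codimension zero.
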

It follows that  the realization at $M$ of the action of $\RR[t]$ on $\calR$ carries kinematic information. We now illustrate this principle by examining the case of the integral geometry of complex space forms.

\subsubsection{The case of the complex space forms}\label{sect:c space forms} Put  $\CC P^n_\lambda$ for the complex space form of complex dimension $n$ and holomorphic sectional curvature $4\lambda$, equipped with its  isometry groups $G^n_\lambda$. For $\lambda = 0$, we take $G^n_0:= \overline{U(n)}$, and the resulting isotropic euclidean space is canonically isomorphic to each tangent space $(T_o\CC P^n_\lambda, (G^n_\lambda)_o)$ described in the general case above. This 
 implies that for each $n$ there exists a single model $\calC^n$ canonically isomorphic as a coalgebra to each $ \calC^{G^n_\lambda}(\CC P^n_\lambda), \ \lambda \in \RR$. 

By definition, an element of $\calC^{G^n_\lambda}(\CC P^n_\lambda)$ is angular iff the corresponding element of $\calC^n$ is.  The space of all such elements is graded, and denoted by $\ang^n = \bigoplus_{k=0}^{2n}\ang^n_k$. 
The space $\calC^n$ decomposes as
$$
\calC^n = \ang^n \oplus \Null^n_\lambda
$$
where  $\Null^n_\lambda := \ker (\glob^n_\lambda)$ (cf. \cite{bfs}).
The space $\ang^n$ admits a canonical graded basis
$$
\Delta_{kq} \in \ang^n_k, \quad \max(0, k-n ) \le q \le \frac k 2 \le n,
$$
characterized by the property that $\Delta_{kq}(\CC^p \oplus \RR^{l-2p}, \cdot)= \delta^{k,q}_{l,p} 
\restrict{\mathcal  {H}^k}{\CC^q \oplus \RR^{k-2q}}$, where $\RR^{l-2p}$ denotes a subspace of $\CC^n$ of the 
given real dimension that is orthogonal both to $\CC^p$ and to $\sqrt{-1}$ times itself.

By Lemma 3.6  of \cite {bfs} and the remarks preceding it, there are natural restriction maps $r_n:\calC^{n+1} \to \calC^n$ that act formally as the identity on the span of the $\Delta_{kp}, k\le n$, and that respect the decompositions above.
Take $\calC^\infty= \ang^\infty \oplus \Null_\lambda^\infty$ to be the inverse limit of this system. The inclusions also induce surjective algebra homomorphisms $\calV^{n+1}_\lambda\to \calV^n_\lambda$, intertwining the actions on $\calC^{n+1}, \calC^n$ and the restriction maps $r_n$. The inverse limit algebra $\calV^\infty_\lambda$ thus acts on $\calC^\infty$, and includes a canonical copy of $\RR[[t]]$ obtained as the inverse limit of the $\calLK(\CC P^n_\lambda)$. The generator thus gives rise to an operator on $\calC^\infty$ that we denote by $t_\lambda$, partially described in Theorem 6.7 of \cite{bfs}.  Clearly $t_\lambda(\Null^\infty_\lambda)\subset \Null^\infty_\lambda$. 

 By Proposition \ref{prop:C is angular} and  Lemma \ref{lem:image R},
the realization maps $\calR \to \calC(\CC P^n_\lambda)$ may be represented as maps
\begin{equation}\label{eq:image rho} 
\notag\rho^n_\lambda:\calR \to \ang^n\subset  \calC^n.
\end{equation}
Applying Corollary \ref{cor:totally geodesic} to the standard totally geodesic inclusions $\CC P^n_\lambda \hookrightarrow \CC P^{n+1}_\lambda $, we deduce that
\begin{equation}\label{eq:restrictions commute}
r_n \circ \rho^{n+1}_\lambda  = \rho^n_\lambda .
\end{equation}
Thus there are maps $\rho_\lambda:\calR\to \ang^\infty, r_n:\calC^\infty \to\calC^n$, such that $\rho_\lambda^n = r_n\circ \rho_\lambda$.  

In the next section we show first that if $\lambda \ne 0$ then $\rho_\lambda$ is a linear isomorphism. Therefore $t_\lambda $ stabilizes $\ang^\infty$, by Theorem \ref{thm:LK}. 
We then give explicitly the relation between the  $\rho_\lambda(C_{kp}), \Delta_{lq}$, and express the action of $t_\lambda$ in terms of the latter.

\subsection{The correspondence $\calR \leftrightarrow \calC^\infty$}
\subsubsection{The exponential generating functions for $\widetilde{\calLK}(\CC P^n_\lambda)$}
We abbreviate 
$$C^\lambda_{kp}:= \rho_\lambda(C_{kp}), \quad \bar \Lambda^\lambda_k := \rho_\lambda(\bar\Lambda_k).
$$

We 
introduce the alternative graded basis
\begin{equation}\label{eq:delta tilde} 
\widetilde \Delta_{kl}:= \sum_{j\ge 0} \binom {l+j} l \Delta_{k,l+j} 
\end{equation}
so that
\begin{equation}\label{eq:delta tilde inv} 
 \Delta_{kj}:= \sum_{l\ge 0} (-1)^j\binom {l+j} l  \widetilde \Delta_{k,l+j} .
\end{equation}
Thus, in the case of complex euclidean space (viz. $\lambda = 0$), the globalizations of the $\tilde \Delta$ coincide with the Tasaki valuations introduced in \cite{hig}.

We may  express the Lipschitz-Killing curvature measures of the $\CC P^n_\lambda$ in terms of the $\tilde \Delta_{kp}$ by means of exponential generating functions. Define 
\begin{align}
g_k(z,y)&:= \binom{2k}k z^k (1-4z)^{-k-\frac 1 2} (1-4y)^{-\frac 3 2}\\
h_k(z,y)&:=  z^k (1-4z)^{-k-\frac 3 2} (1-4y)^{-\frac 3 2}
\end{align}

\begin{proposition}\label{prop:gen fns}
Define the graded $\omega$-linear maps $L,M:\RR[[z,y]]\to \ang^\infty$ by
\begin{align*}
L(z^my^p)&:= m!p! \tilde \Delta_{2m+2p,p}, \\
M(z^my^p) &:= m!p! \tilde \Delta_{2m+2p+1,p}
\end{align*}
of degrees $0,1$ respectively. Then for any $\lambda \in \RR$
\begin{align}
\label{eq:Lambda even}\left(\frac {4}\lambda\right)^{k}L\left( g_k\left(\frac{\lambda z}{4\pi},\frac{\lambda y}{4\pi} \right)\right) & = \bar\Lambda^\lambda_{2k} ,\\
\label{eq:Lambda odd}\frac 2 \pi \left(\frac {16}\lambda\right)^{k} M\left(h_k\left(\frac{\lambda z}{4\pi},\frac{\lambda y}{4\pi} \right)\right)
& = \bar \Lambda_{2k+1}^\lambda.
\end{align}
\end {proposition}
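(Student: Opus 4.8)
The plan is to reduce the two identities to the expansion of the individual realizations $C^\lambda_{kp}:=\rho_\lambda(C_{kp})$ in the basis $\{\widetilde\Delta_{lq}\}$, and then finish by a generating-function computation. By the definition \eqref{eq lambda} of $\bar\Lambda_k$ we have $\bar\Lambda^\lambda_k=\sum_{j\ge 0}\binom{k/2+j}{j}(1/4)^j C^\lambda_{k+2j,j}$; once each $C^\lambda_{k+2j,j}$ is written in the $\widetilde\Delta$ basis, summing over $j$ and invoking the Maclaurin expansions \eqref{eq:maclaurin1}, \eqref{eq:maclaurin1/2}, \eqref{eq:maclaurin3/2} should collapse the resulting double series into the stated closed forms: the factors $(1-4z)^{-k-\frac12}$, $(1-4z)^{-k-\frac32}$ and $(1-4y)^{-\frac32}$ are exactly of the type produced by those identities, while the factorials $m!p!$ in the definitions of $L,M$ appear because those are \emph{exponential} generating functions. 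One should note that after inserting the rescaling $z\mapsto \lambda z/4\pi$, $y\mapsto \lambda y/4\pi$ the functions $g_k,h_k$ carry a factor $(\lambda/4\pi)^k$, which cancels the $(4/\lambda)^k$, resp. $(16/\lambda)^k$, prefactor, so that both sides are genuine power series in $\lambda$ and the case $\lambda=0$ is simply the constant term.

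The real content is therefore the $C^\lambda_{kp}\!\leftrightarrow\!\widetilde\Delta$ correspondence, which I would prove by a template argument. By Proposition~\ref{prop:C is angular} and Lemma~\ref{lem:image R}, $C^\lambda_{kp}$ is an invariant angular curvature measure on $\CC P^n_\lambda$, hence is determined by its angular coefficient function $c$ on the complex $k$-Grassmannian of $T_o\CC P^n_\lambda$. From the characterizing property of the $\Delta_{lq}$, namely $\Delta_{lq}(\CC^{q'}\oplus\RR^{l-2q'},\cdot)=\delta^{l,q}_{l,q'}\restrict{\calH^l}{\CC^q\oplus\RR^{l-2q}}$, the coefficient of $\Delta_{lq}$ in an invariant angular curvature measure of degree $l$ is just the value of its coefficient function at the "split" subspace $\CC^q\oplus\RR^{l-2q}$. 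So I would compute $c_{C^\lambda_{kp}}(\CC^q\oplus\RR^{k-2q})$ directly from the formula in the proof of Proposition~\ref{prop:C is angular}, i.e. as $\int_{B_F}\sum_\pi\sgn(\pi)\,\Omega_{\pi_1\pi_2}\cdots\Omega_{\pi_{2p-1}\pi_{2p}}\theta_{\pi_{2p+1}}\cdots\theta_{\pi_k}$ over the unit ball of $F=\CC^q\oplus\RR^{k-2q}$, now with the explicit Fubini--Study curvature forms substituted for the $\Omega_{ij}$: restricted to $F$ these equal $\lambda\,\theta_i\wedge\theta_j$ except when both indices lie in the complex part, where one picks up the extra $J$-terms built from the Kähler form. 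This makes the integral a finite sum of elementary integrals over products of standard spheres, evaluated with the normalization \eqref{eq:cm relation} and the combinatorics of the Kähler form, producing an explicit polynomial in $\lambda$; converting from the $\Delta$ to the $\widetilde\Delta$ basis via \eqref{eq:delta tilde} then gives the expansion needed in the first paragraph. (Equivalently, one can globalize and use Proposition~\ref{prop:hermitian lemma}, item~\eqref{item:val = glob ang}: the identity becomes one in $\calV^n_\lambda$, to be checked against the known values of $t_\lambda^k$ on the standard totally geodesic spheres and complex subspaces; the bookkeeping is comparable. The logical order of this paragraph and the previous one can of course be swapped, deriving the $C_{kp}$-relation from the $\bar\Lambda_k$-generating functions instead, since the template input is the same.)

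The main obstacle is computational rather than conceptual: extracting the explicit polynomial $c_{C^\lambda_{kp}}(\CC^q\oplus\RR^{k-2q})$ from the Cartan apparatus with the complex-space-form curvature substituted — tracking the mixed complex/real index terms and the Kähler-form factors correctly — and then recognizing the doubly-indexed series that emerges after summing against $\binom{k/2+j}{j}(1/4)^j$ as precisely $(4/\lambda)^k L\bigl(g_k(\lambda z/4\pi,\lambda y/4\pi)\bigr)$ and its odd-degree analogue. I expect the parity split between the interior forms $\psi_p$ and the boundary forms $\phi_{kp}$ of Definition~\ref{def R} to be exactly what distinguishes the $g_k$ from the $h_k$ case and the map $L$ from $M$, and I would arrange the computation so that the rescalings $z\mapsto\lambda z/4\pi$, $y\mapsto\lambda y/4\pi$ and the prefactors $(4/\lambda)^k$, $(2/\pi)(16/\lambda)^k$ emerge from the normalizing constants $\omega_k/\pi^k$ already built into the $C_{kp}$, rather than being inserted by hand.
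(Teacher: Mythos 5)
Your overall strategy---reduce via \eqref{eq lambda} to expanding each $C^\lambda_{kp}$ in the $\widetilde\Delta$ basis, and extract those coefficients by evaluating the angular coefficient function at the split subspaces $\CC^q\oplus\RR^{k-2q}$, using Proposition \ref{prop:C is angular}, Lemma \ref{lem:image R} and the defining property of the $\Delta_{kq}$---is a legitimate route, and the coefficient-extraction idea is sound: an element of $\ang^n_k$ is indeed determined by its values on the split $k$-planes. The paper itself does none of this: its proof is a one-line appeal to Lemma 3.12 of \cite{bfs}, where essentially this curvature computation for complex space forms is carried out. So what you propose is, in effect, to reprove that lemma from scratch rather than cite it.

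The genuine gap is that you never do so. The entire quantitative content of the proposition lies in the specific functions $g_k,h_k$, the exponential normalization $m!\,p!$ in $L,M$, and the prefactors $\left(\frac4\lambda\right)^k$ and $\frac2\pi\left(\frac{16}\lambda\right)^k$, and nothing in your argument pins any of these down: the evaluation of $\sum_\pi\int_{B_F}\sgn(\pi)\,\Omega_{\pi_1\pi_2}\cdots\Omega_{\pi_{2p-1}\pi_{2p}}\theta_{\pi_{2p+1}}\cdots\theta_{\pi_k}$ with the curvature tensor of constant holomorphic curvature $4\lambda$ (tracking the K\"ahler terms when both indices of an $\Omega_{ij}$ fall in the $\CC^q$ factor), the passage from $\Delta$ to $\widetilde\Delta$ via \eqref{eq:delta tilde}, and the resummation against $\binom{k/2+j}{j}4^{-j}$ are exactly where the identity could fail or the constants could come out differently, and they are left at the level of ``should collapse.'' Two further signs that the computation has not been engaged: your expectation that the even/odd dichotomy ($g_k$ vs.\ $h_k$, $L$ vs.\ $M$) reflects the split between the boundary forms $\phi_{kp}$ and the interior forms $\psi_p$ of Definition \ref{def R} is incorrect---on $\CC P^n_\lambda$ the interior form occurs only in the single top degree $k=2n$, which is even, while every $C^\lambda_{kp}$ with $k<2n$, in particular all odd $k$, is realized by $[\bar\rho(\phi_{kp}),0]$; the parity structure instead comes from the hermitian combinatorics of the split subspaces and from the constants $\omega_k/\pi^k$, which behave differently in the two parities. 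And the parenthetical alternative (globalize and test against $t_\lambda^k$ on totally geodesic spheres and complex subspaces) is not viable as stated: those finitely many templates do not separate points of $\calV^n_\lambda$, whereas the curvature-measure route does separate the $\widetilde\Delta_{kq}$ precisely because one may localize at split subspaces. So the route is workable, but the proof of the proposition---the computation producing $g_k$, $h_k$ and the stated constants---remains to be supplied.
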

In other words, the $\bar \Lambda^\lambda_j$ may be expressed as sums of the  $\tilde \Delta_{kp}$, with  coefficients given by the values at $(0,0)$ of appropriate partial derivatives of the functions on the left of \eqref{eq:Lambda even}, \eqref{eq:Lambda odd}.
\begin{proof}
This is a straightforward modification of Lemma 3.12 of  \cite{bfs}.
\end{proof}

The following now implies Proposition \ref{prop:hermitian lemma} (1).
\begin{corollary} \label{cor:surjective} 
Suppose $\lambda \ne 0$. Then the map $\rho_\lambda:\calR\to \ang^\infty$ is surjective. Furthermore, if $ m\le  n $, then $\restrict{\rho^n_\lambda} {\calR_m}$ is an isomorphism onto $\ang^n_m$.
\end{corollary}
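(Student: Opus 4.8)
The plan is to read off surjectivity directly from the generating-function formulas in Proposition \ref{prop:gen fns}. Fix $\lambda \neq 0$. The key observation is that the maps $L,M:\RR[[z,y]]\to \ang^\infty$ are, by construction, $\omega$-linear isomorphisms onto the even-degree and odd-degree parts of $\ang^\infty$ respectively: each monomial $z^m y^p$ is sent (up to the nonzero scalar $m!\,p!$) to the basis element $\widetilde\Delta_{2m+2p,p}$ (resp. $\widetilde\Delta_{2m+2p+1,p}$), and as $(m,p)$ ranges over all pairs with $m,p\ge 0$ these exhaust the indices $(k,q)$ with $0\le q\le k/2$, i.e. a graded basis of $\ang^\infty$. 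So it suffices to show that the image of $\rho_\lambda$ contains, for every $k$, the elements $L(g_k(\tfrac{\lambda z}{4\pi},\tfrac{\lambda y}{4\pi}))$ and $M(h_k(\tfrac{\lambda z}{4\pi},\tfrac{\lambda y}{4\pi}))$, and that these span all of $\ang^\infty$.

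First I would invoke \eqref{eq:Lambda even} and \eqref{eq:Lambda odd}: since $\lambda\ne 0$, these say precisely that $\bar\Lambda^\lambda_{2k}$ and $\bar\Lambda^\lambda_{2k+1}$ are nonzero scalar multiples of $L(g_k(\cdots))$ and $M(h_k(\cdots))$. Hence $\bar\Lambda^\lambda_j = \rho_\lambda(\bar\Lambda_j)\in \rho_\lambda(\calR)$ for all $j\ge 0$, and the image of $\rho_\lambda$ is exactly the $\omega$-span of the $L(g_k(\cdots))$ and $M(h_k(\cdots))$ together with whatever else lies in $\rho_\lambda(\calR)$. Actually I only need the inclusion $\supseteq$: the span of these particular elements already equals $\ang^\infty$. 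To see this, note $g_k$ has the form $\binom{2k}k z^k(1-4z)^{-k-1/2}(1-4y)^{-3/2}$, so $g_k = c_k z^k + (\text{higher order in }z)$ with $c_k=\binom{2k}{k}\ne 0$; thus in the filtration of $\ang^\infty$ by powers of $z$ (equivalently, by the index $q$ in $\widetilde\Delta_{kq}$ at fixed total degree, reading off the bottom $q$), the family $\{L(g_k(\tfrac{\lambda z}{4\pi},\tfrac{\lambda y}{4\pi}))\}_k$ is "unitriangular" with respect to the basis $\{L(z^k y^p)\} = \{m!p!\,\widetilde\Delta_{2m+2p,p}\}$, once one also expands the $(1-4y)^{-3/2}$ factor in powers of $y$. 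More precisely: $L(g_k)$ is $c_k\cdot k!\,\widetilde\Delta_{2k,0}$ plus terms $\widetilde\Delta_{2k+2p,p'}$ with strictly larger degree or with the degree-$2k$ part having $p'=0$ only from the leading term — one checks that varying $k$ and extracting successive $y$-derivatives recovers every $\widetilde\Delta_{2m+2p,p}$. Since $t_\lambda\cdot\bar\Lambda^\lambda_k$ relates consecutive $\bar\Lambda$'s (Theorem \ref{thm:LK}), an even cleaner route is: the $\bar\Lambda^\lambda_k$ for $k=0,1,2,\dots$ together with their images under the polynomial action already considered span, because the transforms $L,M$ intertwine that action with an invertible operator on power series; but the triangularity argument is self-contained and suffices.

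For the second, sharper statement, fix $m\le n$ and restrict to degree $m$. The map $\restrict{\rho^n_\lambda}{\calR_m}$ lands in $\ang^n_m$ by \eqref{eq:image rho}. Both $\calR_m$ and $\ang^n_m$ have the same finite dimension $\lfloor m/2\rfloor+1$: indeed $\calR_m$ has $\omega$-basis $\{C_{m,p}: 0\le 2p\le m\}$, while $\ang^n_m$ has basis $\{\Delta_{m,q}:\max(0,m-n)\le q\le m/2\}$, and since $m\le n$ the lower constraint $m-n\le 0$ is vacuous, giving exactly $\lfloor m/2\rfloor+1$ values of $q$. So it is enough to prove injectivity of $\restrict{\rho^n_\lambda}{\calR_m}$, or equivalently surjectivity onto $\ang^n_m$. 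Because $\rho_\lambda^n = r_n\circ\rho_\lambda$ and $r_n$ acts as the identity on the span of the $\Delta_{kq}$ with $k\le n$ (hence is injective on $\ang^\infty_m$ for $m\le n$), surjectivity of $\restrict{\rho^n_\lambda}{\calR_m}$ onto $\ang^n_m$ follows from surjectivity of $\restrict{\rho_\lambda}{\calR_m}$ onto $\ang^\infty_m$. And this last fact is the degree-$m$ graded piece of the surjectivity established in the previous paragraph: the $L(g_k(\cdots))$ and $M(h_k(\cdots))$ with $2k\le m$ (resp. $2k+1\le m$) involve only indices $\widetilde\Delta_{j,p}$ with $j\le m$ in their degree-$m$ component, after re-expanding $\widetilde\Delta$ in the $\Delta$ basis via \eqref{eq:delta tilde inv} the same triangular structure persists, and one concludes $\rho_\lambda(\calR_{\le m})\supseteq \ang^\infty_{\le m}$ in each degree.

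The main obstacle is bookkeeping rather than conceptual: one must verify carefully that the leading-term (triangularity) structure of the generating functions $g_k,h_k$ translates, after the change of basis $\widetilde\Delta\leftrightarrow\Delta$ in \eqref{eq:delta tilde}–\eqref{eq:delta tilde inv}, into a genuinely invertible (infinite but locally finite, graded) transition matrix between $\{\bar\Lambda^\lambda_j\}$ and the $\{\Delta_{kq}\}$-basis. Since \eqref{eq:Lambda even}, \eqref{eq:Lambda odd} are quoted wholesale from Proposition \ref{prop:gen fns} (itself a modification of Lemma 3.12 of \cite{bfs}), the proof of the corollary is short: it is essentially the remark that $L,M$ are isomorphisms onto their respective graded summands and that the functions $g_k,h_k$, being of the form $z^k(1+O(z,y))$ up to a nonzero constant, give a triangular — hence invertible — system when $k$ ranges over all nonnegative integers.
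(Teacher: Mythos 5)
Your core computation is the paper's: read the transition matrix between the $C^\lambda_{m,p}$ and the $\tilde\Delta_{m,q}$ off the generating functions of Proposition~\ref{prop:gen fns}, observe triangularity with nonzero diagonal (this is where $\lambda\neq0$ enters), and count dimensions. But your first paragraph asserts something false: the $\omega$-span of the elements $L\bigl(g_k\bigl(\tfrac{\lambda z}{4\pi},\tfrac{\lambda y}{4\pi}\bigr)\bigr)$ and $M\bigl(h_k\bigl(\tfrac{\lambda z}{4\pi},\tfrac{\lambda y}{4\pi}\bigr)\bigr)$ --- i.e.\ of the $\bar\Lambda^\lambda_j$ themselves --- is \emph{not} $\ang^\infty$; it is only $\rho_\lambda(\widetilde{\calLK})$, a proper subspace. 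Concretely, if $\sum_j a_j\bar\Lambda^\lambda_j$ has vanishing components in degrees $0$ and $1$, then $a_0=a_1=0$, so its degree-$2$ component is a multiple of the single element $C^\lambda_{2,0}$; hence the span meets the purely degree-$2$ part of $\ang^\infty$ in a subspace of dimension at most one, while $\ang^\infty_2$ is two-dimensional (e.g.\ $\tilde\Delta_{2,1}=\Delta_{2,1}$ is not in the span, cf.\ \eqref{eq:bar form 1}). Relatedly, ``extracting successive $y$-derivatives'' is not an operation available inside a span. What you may legitimately extract are graded components, and these lie in the image of $\rho_\lambda$ for a different reason: the degree-$m$ component of $\bar\Lambda_j$ is a nonzero multiple of $C_{m,(m-j)/2}\in\calR_m$, and $\rho_\lambda$ is graded.

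Once this is repaired, your argument is exactly the paper's, with the two assertions handled in the opposite order. In fixed degree $m\le n$, the coefficients of the $C^\lambda_{m,p}$ in the basis $\tilde\Delta_{m,q}$ are the values at $(0,0)$ of $\partial_z^a\partial_y^b$, $a+b=\lfloor m/2\rfloor$, applied to the left-hand sides of \eqref{eq:Lambda even}--\eqref{eq:Lambda odd}; since the coefficient of $z^ay^b$ in $g_k$ (resp.\ $h_k$) vanishes for $a<k$ and is nonzero for $a=k$, and the prefactors are nonzero because $\lambda\neq0$, the system is triangular with nonzero diagonal. With $\dim\calR_m=\dim\ang^n_m=\lfloor m/2\rfloor+1$ for $m\le n$ (your dimension count and the use of $r_n$ are fine), this gives the isomorphism statement, and surjectivity of $\rho_\lambda$ then follows degree by degree --- the paper instead deduces the first assertion from the second via \eqref{eq:restrictions commute}. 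So the gap is in the packaging of the surjectivity claim, not in the key triangularity computation, which you do have; delete the claim that the $\bar\Lambda^\lambda_j$ alone span $\ang^\infty$ and argue on graded pieces throughout.
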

\begin{proof} 
By \eqref{eq:restrictions commute}, it is enough to prove the second assertion. Suppose $2l:=m\le n$. Clearly $\tilde \Delta_{2l,0},\dots,\tilde \Delta_{2l,l}$ are linearly independent. On the other hand, for $j\le l$ the component of degree $2l$ in $\bar \Lambda_{2j}$ is $C_{2l, j-l}$, up to a nonzero factor. Thus the matrix expressing the $C^\lambda_{2l,\cdot}$ in terms of the $\tilde \Delta_{2l, \cdot}$ is given by the values at $(0,0)$ of $\partial_z^m \partial_y^p  L'$ with $m+p = l, k\le l$, where $L'$ is the function on the left of \eqref{eq:Lambda even}. The entries vanish if $m< k$, and are nonzero if $m=k$. Thus the matrix is  the reflection of a triangular matrix with nonzero diagonal entries. The case where $m$ is odd is similar.
\end{proof}

\subsubsection{Expressing the $\tilde \Delta_{kp}$ in terms of the $C^\lambda_{lq}$} Fix $\lambda \ne 0$.
Recalling the identification $\alpha:\calR\leftrightarrow \RR[[\xi,\eta]] $, we introduce new coordinates of degree 2 by
$$
\bar \xi :=  {\xi^2} , \quad \bar \eta := \frac {\eta} { \lambda} - \bar \xi .
$$ 
Take $z,y$ both have degree 2, and define the graded $\omega$-linear maps $\calL,\calM: \RR[[z,y]]\to \RR[[\xi, \eta]]$  by 
\begin{align*}
\calL\left(z^my^p\right)&:= \pi^{m+p}\frac{\binom{m+p} p}{ (2p+1)\binom {2p} p \binom{2m} m} \bar\xi^m \bar \eta^p,\\
\calM\left(z^my^p\right)&:= \left(\frac \pi 4\right)^{m+p}\frac{(2m+2p+1)\binom{2m+2p}{m+p}\binom {m+p} p}{(2m+1)(2p+1) \binom{2m} m \binom {2p} p} \bar\xi^m \bar \eta^p.
\end{align*}
Clearly $\calL, \calM$ intertwine the operators $z^k \partial_z^k, \bar\xi^k \partial^k_{\bar\xi}$, and by \eqref {eq:maclaurin1/2}, \eqref{eq:maclaurin3/2},
\begin{align*}
\calL\left(g_0(z,y)\right)&= \left(1- \pi({\bar\xi+\bar \eta})\right)\inv, \\
\calM\left(h_0(z,y)\right)&= \left(1- \pi({\bar\xi+\bar \eta})\right)^{-\frac 3 2}.
\end{align*}
Since $z^k \partial_z^k g_0= k!g_k$ and $ z^k \partial_z^k h_0 =\frac{(2k+1)!}{k!}h_k$, we  thus compute 
\begin{align*}
\calL(g_k(z,y)) &= \pi^k\bar\xi^k \left(1-\pi(\bar\xi+\bar \eta)\right)^{-k-1} ,\\
\calM(h_k(z,y)) &= \left(\frac \pi 4\right)^k\bar\xi^k \left(1-\pi(\bar\xi+\bar \eta)\right)^{-k-\frac 3 2}, \\
\end{align*}
and
\begin{align*}
\calL\left(\left(\frac {4}\lambda\right)^{k} g_k\left(\frac{\lambda z}{4\pi},\frac{\lambda y}{4\pi} \right) \right)&=
\xi^{2k} \left(1-\frac \eta 4 \right)^{-k-1}  = \alpha(\bar\Lambda_{2k} ),\\
\xi \cdot\calM\left(\left(\frac {16}\lambda\right)^{k} h_k\left(\frac{\lambda z}{4\pi},\frac{\lambda y}{4\pi} \right)\right)&= 
\xi^{2k+1}\left(1-\frac \eta 4\right)^{-k-\frac 3 2}= \alpha(\bar \Lambda_{2k+1}).
\end{align*}

Proposition \ref{prop:gen fns} now
implies 
 that  $f:=\rho_\lambda \circ\alpha\inv \circ \calL\circ L\inv$ 
maps each $\bar \Lambda_{2k}^\lambda$ to itself, and that $g:=\rho_\lambda\circ \alpha\inv \circ m_{\frac {\pi \xi}2}\circ \calM \circ {M}\inv $  maps each $\bar \Lambda_{2k+1}^\lambda$ to itself, where $m_{\frac {\pi \xi}2}$ denotes multiplication by ${\frac {\pi \xi}2}$. As graded maps, $f,g$ act also as the identity on the  graded components of these elements.
But by the proof of Corollary \ref{cor:surjective}, these graded components  constitute an  $\omega$-basis for $\ang^\infty_{even}$ and $\ang^\infty_{odd}$, respectively. Therefore $f,g$  are the identity maps on these spaces. 
In other words,
putting 
$$
c_{mp}:=\frac{(m+p)!}{(2m)! (2p+1)! }, \quad d_{mp}:=\frac{(2m+2p+1)! }{(m+p)!(2m+1)! (2p+1)! },
$$
we obtain
\begin{align}
\notag\tilde \Delta_{2m+2p,p} &=\pi^{m+p}c_{mp}\rho_\lambda \circ\alpha\inv(\bar\xi^{m} \bar \eta^p)\notag\\
\label{eq:bar form 1}&= \pi^{m+p}c_{mp}\rho_\lambda \circ\alpha\inv\left(\xi^{2m}  \left(\frac {\eta}{\lambda} - {\xi^2} \right)^p\right)\\
\notag &= \pi^{m+p}\lambda^{-p}c_{mp}\sum_{j=0}^p \left(- { \lambda}\right)^j \binom p j 
C_{2m+2p,p-j}^\lambda.
\end{align}
and similarly
\begin{align}
\label{eq:bar form2}\tilde \Delta_{2m+2p+1,p} 
&= \frac  \pi 2 \left(\frac \pi 4\right)^{m+p}d_{mp}\rho_\lambda \circ \alpha \inv\left(\xi \bar\xi^{m} \bar \eta^p\right)\\
\notag &= \frac \pi{2\lambda^p} \left(\frac \pi 4\right)^{m+p} d_{mp} \sum_{j=0}^p (-\lambda)^j \binom p j  C^\lambda_{2m+2p+1,p-j}
\end{align}

The relations \eqref{eq:bar form 1}, \eqref{eq:bar form2} admit the common form
 \begin{equation}
 \tilde \Delta_{kp}    = \left( \frac{\pi}{2}\right)^{\lceil\frac k 2 \rceil}  \frac{k!!}{(k-2p)!(2p+1)!}\sum_{j=0}^p   (-1)^{p-j} \lambda^{-j}  \binom{p}{j}C^\lambda_{kj}.
 \end{equation}
 This expression is easily inverted:
\begin{equation}
C^\lambda_{kj}= \left( \frac{2}{\pi}\right)^{\lceil\frac k 2 \rceil} \frac{\lambda^j}{k!!}\sum_{p=0}^j \binom{j}{p}(k-2p)!(2p+1)!\tilde \Delta_{kp}.
\end{equation}

\subsubsection{Action of $t_\lambda$} We may now use \eqref{eq:bar form 1} and Theorem \ref{thm:LK} to compute
\begin{equation*}
t_\lambda\cdot \tilde \Delta_{2m+2p,p}  =  \pi^{m+p}c_{mp}\rho_\lambda \circ\alpha\inv\left(\xi\left(1-\frac \eta 4\right)^{-\frac 1 2} \bar \xi^m \bar \eta^p\right)
\end{equation*}
where
\begin{align*}
\left(1-\frac \eta 4\right)^{-\frac 1 2} \bar \xi^m \bar \eta^p&=\left(1-\frac \lambda 4\left(\bar\eta +\bar \xi\right)\right)^{-\frac 1 2}
\bar \xi^m \bar \eta^p\\
&= \sum_{n\ge m,q\ge p}\left(\frac \lambda {16}\right)^{n+q-m-p}\binom{2n+2q-2m-2p}{n+q-m-p} \binom{n+q-m-p}{n-m} \bar\xi^{n}\bar\eta^q
\end{align*}
so that, by  \eqref{eq:bar form2}, 
\begin{align}\label{eq:t even}
t_\lambda&\cdot \tilde \Delta_{2m+2p,p} = \\
\notag &\frac {2 c_{mp}} \pi  \left(\frac{16\pi}{\lambda}\right)^{m+p} \sum_{n\ge m,q\ge p} d_{nq}\inv\left(\frac \lambda {4\pi}\right)^{n+q}\binom{2n+2q-2m-2p}{n+q-m-p} \binom{n+q-m-p}{n-m} 
\tilde \Delta_{2n+2q+1,q}
\end{align}
Similarly,
\begin{align}\label{eq:t odd}
t_\lambda&\cdot \tilde \Delta_{2m+2p+1,p}  = \\
\notag & \frac  {d_{mp}} 2 \left(\frac {4\pi}\lambda\right)^{m+p}\sum_{n\ge m,q\ge p} c_{n+1,q}\inv\left(\frac \lambda {16\pi}\right)^{n+q}\binom{2n+2q-2m-2p}{n+q-m-p} \binom{n+q-m-p}{n-m} 
\tilde \Delta_{2n+2q+2,q}
\end{align}

\bigskip

\begin{theorem}\label{thm:t action} $t_\lambda\cdot \tilde \Delta_{kp} =$ 
$$\frac{2\omega_{k-1}}{\omega_{k}} \frac{(k-1)!!}{(k-2p)!(2p+1)!} \sum_{l\geq0, q\geq p} \frac{(k+2l-2q+1)!(2q+1)!}{(k+2l+1)!!} \left( \frac{\lambda}{8\pi}\right)^l  \binom{2l}{l} \binom{l}{q-p} \tilde \Delta_{k+2l+1,q}$$
\end{theorem}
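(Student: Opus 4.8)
\emph{Proof strategy.} The formulas \eqref{eq:t even} and \eqref{eq:t odd} already express $t_\lambda\cdot\tilde\Delta_{kp}$ in the $\tilde\Delta$ basis, for $k$ even and $k$ odd respectively, so the plan is simply to check that each coincides with the single closed form in the statement, treating the two parities in parallel. For $k$ even write $k=2m+2p$ and reindex the double sum in \eqref{eq:t even} by the single nonnegative integer $l:=n+q-m-p$, keeping $q$; then $n=m+l-(q-p)$, the condition $n\ge m$ becomes $0\le q-p\le l$, and $\tilde\Delta_{2n+2q+1,q}=\tilde\Delta_{k+2l+1,q}$. Under this substitution the two binomial factors of \eqref{eq:t even} become exactly $\binom{2l}{l}\binom{l}{q-p}$, so only the scalar prefactor remains to match. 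Inserting $c_{mp}=(m+p)!/((2m)!(2p+1)!)$ and $d_{nq}\inv=(n+q)!(2n+1)!(2q+1)!/(2n+2q+1)!$, rewriting $2n+1=k+2l-2q+1$ and $2n+2q+1=k+2l+1$, and collecting the powers of $2$, $4$, $\pi$, $\lambda$ coming from $(16\pi/\lambda)^{m+p}(\lambda/4\pi)^{n+q}$ and from the factorials, the prefactor reduces to the claimed one once one uses the elementary identities $(k+2l+1)!=(k+2l+1)!!\,(k+2l)!!$ with $(k+2l)!!=2^{m+p+l}(m+p+l)!$, together with the Wallis-type ratio $\omega_{k-1}/\omega_k=k!!/(\pi\,(k-1)!!)$ valid for even $k$.

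For $k$ odd the computation is formally identical, now starting from \eqref{eq:t odd} with $k=2m+2p+1$ and with $c_{n+1,q}$ in place of $d_{nq}$; here $k+2l+1$ is \emph{even}, so one uses $(k+2l+1)!!=2^{m+p+l+1}(m+p+l+1)!$ and the odd-$k$ ratio $\omega_{k-1}/\omega_k=(k-1)!!/(2\,(k-1)!!)$, equivalently the standard $\Gamma$-value $\omega_{k-1}/\omega_k=\pi^{-1/2}\Gamma(k/2+1)/\Gamma((k+1)/2)$. In both cases the surviving algebra collapses to the stated coefficient. An equivalent and perhaps cleaner route bypasses \eqref{eq:t even}, \eqref{eq:t odd} altogether: since the maps $f,g$ of the previous subsection are the identity on $\ang^\infty_{\mathrm{even}},\ang^\infty_{\mathrm{odd}}$, one has $\rho_\lambda\circ\alpha\inv\circ\calL=L$ and $\rho_\lambda\circ\alpha\inv\circ m_{\pi\xi/2}\circ\calM=M$, while $\rho_\lambda\circ\alpha\inv$ intertwines multiplication by $\xi/\sqrt{1-\eta/4}$ with $t_\lambda\cdot$ by Theorem \ref{thm:LK} and \eqref{eq:t action 1}. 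Applying $\xi/\sqrt{1-\eta/4}=(\pi\xi/2)\cdot(2/(\pi\sqrt{1-\eta/4}))$ to $\tilde\Delta_{2m+2p,p}=L(z^my^p/(m!p!))$ then gives $t_\lambda\cdot\tilde\Delta_{2m+2p,p}=M\bigl(\calM\inv(\tfrac{2}{\pi}(1-\eta/4)^{-1/2}\calL(z^my^p/(m!p!)))\bigr)$, which one expands using that $\calL,\calM$ intertwine $z^k\partial_z^k$ with $\bar\xi^k\partial_{\bar\xi}^k$, the closed forms of $\calL(g_0),\calM(h_0)$, and the expansions \eqref{eq:maclaurin1/2}, \eqref{eq:maclaurin3/2}; the coefficient of each $\tilde\Delta_{k+2l+1,q}$ emerges as claimed, and the case of $\tilde\Delta_{2m+2p+1,p}$ is symmetric.

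The main obstacle is purely bookkeeping rather than conceptual: the index $k+2l+1$ appearing in the answer has the opposite parity to $k$, so the double factorial $(k+2l+1)!!$ and the ratio $\omega_{k-1}/\omega_k$ must be handled with the correct parity conventions, and one has to be scrupulous about the numerous powers of $2$ and $4$ scattered through $c_{mp}$, $d_{mp}$, the prefactors $(\lambda/16\pi)$ and $(\lambda/4\pi)$, and the conversions $(2r)!!=2^r r!$. Beyond these identities and the facts already assembled in this section — in particular Theorem \ref{thm:LK}, the generating-function identities for $\bar\Lambda^\lambda_j$, and the isomorphism $\rho_\lambda$ from Corollary \ref{cor:surjective} — no new input is required.
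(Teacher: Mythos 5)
Your proposal is correct and takes essentially the same route as the paper: the paper's proof is exactly the reindexing $l=n+q-m-p$ in \eqref{eq:t even}, \eqref{eq:t odd} combined with the identity $2\omega_{k-1}(k-1)!!/(\omega_k k!!)=2/\pi$ or $1$ according to the parity of $k$, and your ``alternative route'' simply re-derives \eqref{eq:t even}, \eqref{eq:t odd} from the $L,M,\calL,\calM$ machinery, which the paper has already done just before the theorem. One small slip: for odd $k$ the ratio should read $\omega_{k-1}/\omega_k = k!!/\left(2(k-1)!!\right)$, not $(k-1)!!/\left(2(k-1)!!\right)$, but the $\Gamma$-function expression you give immediately afterwards is the correct one, so the computation goes through.
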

\begin{proof}
Substituting $n+q=m+p+l$  and using the fact that 
 $$\frac{2\omega_{k-1}(k-1)!!}{\omega_{k}k!!}  = \begin{cases} \frac{2}{\pi} & \text{if } k \text{ is even}\\
                                                          1 & \text{if } k \text{ is odd}\\
                                                          \end{cases}$$
this expression for $t\cdot \tilde \Delta_{kp} $  follows immediately from \eqref{eq:t even}, \eqref{eq:t odd}.

\end{proof}

\begin{bibdiv}
\begin{biblist}

\bib{ale icm}{collection}{
  author={Alesker, S.},
title ={Algebraic structures on valuations, their properties and applications},
note={ Proceedings of the International Congress of Mathematicians, Vol. II (Beijing, 2002)}
pages = {757--764}, 
publisher={Higher Ed. Press}
city= {Beijing},
date={ 2002}
}

\bib{ale vals 1}{article}{
  author={Alesker, S.},
title ={Theory of valuations on manifolds. I. Linear spaces},
 journal={Israel J. Math. },
   volume={156},
   date={2006},
   number={2},
   pages={311--339},
}

\bib{ale vals 2}{article}{
  author={Alesker, S.},
title ={Theory of valuations on manifolds. II },
 journal={ Adv. Math. },
   volume={207 },
   date={2006},
   pages={420--454},
   issn={0002-9327},
   review={\MR{2905004}},
   doi={10.1353/ajm.2012.0011}}

\bib{ale survey}{article}{
  author={Alesker, S.},
title ={Theory of valuations on manifolds: a survey.},
 journal={Geom. Funct. Anal.},
   volume={17},
   date={2007},
   pages={1321--1341}
}

\bib{alesker_bernig12}{article}{
   author={Alesker, S.},
   author={Bernig, A.},
   title={The product on smooth and generalized valuations},
   journal={Amer. J. Math.},
   volume={134},
   date={2012},
   number={2},
   pages={507--560},
   issn={0002-9327},
   review={\MR{2905004}},
   doi={10.1353/ajm.2012.0011},
}

%

%


\bib{ale vals 3}{article}{
  author={Alesker, S.},
     author={J.H.G. Fu},
title ={Theory of valuations on manifolds. III. Multiplicative structure in the general case},
 journal={Trans. Amer. Math. Soc. },
   volume={360 },
   date={2008},
   pages={1951--1981},
  }

\bib{crm}{inproceedings}{
   author={S. Alesker},
   author={J.H.G. Fu},
   title={Integral geometry and valuations},
   series={Advanced Courses in Mathematics. CRM Barcelona},
   note={Lectures from the Advanced Course on Integral Geometry and
   Valuation Theory held at the Centre de Recerca Matem\`atica (CRM),
   Barcelona, September 6--10, 2010;
   Edited by Eduardo Gallego and Gil Solanes},
   editor={Gallego, E.}
   editor={Solanes, G.}
   publisher={Birkh\"auser/Springer, Basel},
   date={2014},
   pages={viii+112},
   isbn={978-3-0348-0873-6},
   isbn={978-3-0348-0874-3},
   review={\MR{3380549}},
}

\bib{bb}{article}{
   author={Bernig, A.},
  author ={Br\"ocker, L.},
   title={Valuations on manifolds and Rumin cohomology},
   journal={J. Differential Geom.},
    volume={75},
    date={2003},
    pages={433--457},
}

\bib{hig}{article}{
   author={Bernig, A.},
  author ={Fu, J.H.G.},
   title={Hermitian integral geometry},
   journal={Ann. of Math.},
       journal={Ann. of Math. (2)},
    volume={173},
    date={2011},
    pages={907--945},
}

\bib{bfs}{article}{
   author={Bernig, A.},
  author ={Fu, J.H.G.}
   author={Solanes, G.},
   title={Integral geometry of complex space forms},
   journal={Geom. Funct. Anal.},
   volume={24},
   date={2014},
   number={2},
   pages={403--492},
   issn={1016-443X},
   review={\MR{3192033}},
   doi={10.1007/s00039-014-0251-1},
}

\bib{bfs2}{collection.article}{
   author={Bernig, A.},
    author ={Fu, J.H.G.}
   author={Solanes, G.},
   title={Dual curvature measures in hermitian integral geometry},
   editor      = {Bianchi, G.},
  editor= {Colesanti, A.},
    editor= {Gronchi, P.},
      booktitle   = {Analytic aspects of Convexity},
      series= {INdAM-Springer Series},
      publisher ={Springer},
   status={to appear}
   paperinfo={arXiv:1702.02176}
}
%

%

\bib{bernig voide}{article}{
   author={Bernig, A.},
      author={Voide, F.},
   title={Spin(9)-invariant valuations on the octonionic plane},
   journal= {Israel J. Math.},
   volume={214},
   date= {2016},
   pages={831--855},
}

\bib{bishop_crittenden64}{book}{
   author={Bishop, R. L.},
   author={Crittenden, R. J.},
   title={Geometry of manifolds},
   series={Pure and Applied Mathematics, Vol. XV},
   publisher={Academic Press, New York-London},
   date={1964},
   pages={ix+273},
   review={\MR{0169148 (29 \#6401)}},
}

\bib{cms}{article}{
   author={Cheeger, J.},
      author={M\"uller, R.},
         author={Schrader, R.},
   title={On the curvature of piecewise flat spaces},
   journal= {Comm. Math. Phys.},
   volume={92},
   date= {1984},
   pages={405--454},
}

\bib{chern}{article}{
   author={Chern,  S.S. },
   title={A simple intrinsic proof of the Gauss-Bonnet formula for closed Riemannian manifolds },
   journal= {Ann. of Math.},
   volume={45},
   date= {1944},
   pages={747--752},
}

\bib{chern2}{article}{
   author={Chern,  S.S. },
   title={On the curvatura integra in a Riemannian manifold},
   journal= {Ann. of Math.},
   volume={46},
   date= {1945},
   pages={674--684},
}


%
%



\bib{cm}{article}{
   author={Federer,  H. },
   title={Curvature measures},
   journal= {Trans. Amer. Math. Soc.},
   volume={93},
   date= {1959},
   pages={418--491},
}
%


\bib{fu lag}{article}{
   author={Fu, J. H. G.},
title = {Some remarks on Legendrian rectifiable currents}
journal = {Manuscripta Math.}
volume = {97},
pages={175--187},
date={1998}
}

\bib{fu94}{article}{
   author={Fu, J. H. G.},
   title={Curvature measures of subanalytic sets},
   journal={Amer. J. Math.},
   volume={116},
   date={1994},
   pages = {819--880}
}

\bib{fu15}{article}{
   author={Fu, J. H. G.},
   title={Intersection theory and the Alesker product},
   journal={Indiana Univ. Math. J.},
   volume={65},
   date={2016},
   pages = {1347--1371}
}

\bib{fu sand}{collection.article}{
  author={Fu, J. H. G.},
  title={Integral geometric regularity},
  editor      = {Jensen, E.B.V.},
  editor= {Kiderlen, M.},
  booktitle   = {Tensor valuations and their Applications in Stochastic Geometry and Imaging},
  date        = {2017},
  pages       = {266--290},
  series= {Lecture Notes in Mathematics}
  volume ={2177}
}



\bib{fu-mcc}{article}{
   author={J. H. G. Fu},
   author={C. McCrory},
   title={Stiefel-Whitney classes and the conormal cycle of a singular variety},
   journal= {Trans. Amer. Math. Soc.},
   volume={349},
   date= {1997},
pages={809--835},
}

\bib{hadwiger}{book}{
   author={Hadwiger, H.},
   title={Vorlesungen \"uber Inhalt, Oberfl\"ache und Isoperimetrie},
   publisher={Springer, Berlin},
   date={1957},
   }

\bib{klain rota}{book}{
   author={Klain, D.},
   author={Rota, G.-C.},
   title={Introduction to geometric probability},
   series={Pure and Applied Mathematics, Vol. XV},
   publisher={Cambridge U. Press},
   date={1994},
   pages={ix+273},
   review={\MR{0169148 (29 \#6401)}},
}

\bib{procesi07}{book}{
   author={Procesi, C.},
   title={Lie groups},
   series={Universitext},
   note={An approach through invariants and representations},
   publisher={Springer, New York},
   date={2007},
   pages={xxiv+596},
   isbn={978-0-387-26040-2},
   isbn={0-387-26040-4},
   review={\MR{2265844 (2007j:22016)}},
}

\bib{rumin}{article}{
   author={Rumin, M.},
   title={Formes diff\'erentielles sur les variétés de contact},
   journal= {J. Differential Geom.},
   volume={39},
   date= {1994},
   pages={281--330}
}

\bib{weyl}{article}{
   author={Weyl,  H. },
   title={On the volume of tubes},
   journal= {Duke Math. J.},
   volume={61},
   date= {1939},
   pages={461--472}
}

\bib{zahle}{article}{
   author={Z\"ahle,  M. },
   title={Curvature and currents for unions of sets with positive reach},
   journal= {Geom. Dedicata},
   volume={23},
   date= {1987},
   pages={155--171}
}


%


\end{biblist}
\end{bibdiv}

\end{document}